%% file: main.tex
\documentclass[11pt]{article}
\usepackage{graphicx} 
\usepackage[a4paper,width=170mm,top=25mm,bottom=25mm]{geometry}

\usepackage{titlesec}

\titleformat{\section}[block]
  {\normalfont\large\scshape\centering}
  {\thesection.}
  {.5em}
  {}

\titleformat{\subsection}[runin]
  {\normalfont\bfseries}
  {\thesubsection.}
  {.5em}
  {}
  [.]

\titlespacing*{\subsection}
  {0pt}
  {1.5ex plus .5ex minus .2ex}
  {.5em}

\titleformat{\subsubsection}[runin]
  {\normalfont\itshape}
  {\thesubsubsection.}
  {.5em}
  {}
  [.]

\titlespacing*{\subsubsection}
  {0pt}
  {1.5ex plus .5ex minus .2ex}
  {.5em}

\usepackage{amsmath, amsthm}
\usepackage{bm}
\usepackage{bbm}
\usepackage{amsfonts}
\usepackage{amssymb}
\usepackage{tikz-cd}

\usepackage{pdfpages}
\usepackage{figures/tikzit}
\input{figures/style1.tikzstyles}

\usepackage{pgfplots}
\usetikzlibrary{calc}
\pgfplotsset{compat=1.18}
\usepgfplotslibrary{colormaps, external}

\usepackage{extarrows}
\usepackage{mathtools}
\usepackage{quiver}
\usepackage{mathrsfs}

\usepackage{emptypage}
\usepackage{pdfpages}

\usepackage{xcolor}
\definecolor{purple}{rgb}{0.4,0,0.7}
\definecolor{green}{rgb}{0.1,0.2,0.8}

\usepackage[colorlinks,linkcolor = blue, citecolor=blue,urlcolor=blue,bookmarks=false,hypertexnames=true]{hyperref}
\usepackage[capitalise]{cleveref}
\usepackage{aliascnt}

\usepackage{adjustbox}
\usepackage[shortlabels]{enumitem}

\DeclareMathOperator{\spann}{span}
\DeclareMathOperator{\id}{id}
\DeclareMathOperator{\End}{End}
\DeclareMathOperator{\Hom}{Hom}

\DeclareMathOperator{\Tr}{Tr}
\DeclareMathOperator{\tr}{tr}

\DeclareMathOperator{\vect}{\mathsf{Vect}}

\DeclareMathOperator{\Rep}{\mathsf{Rep}}

\DeclareMathOperator{\C}{\mathbb{C}}

\DeclareMathOperator{\Res}{Res}

\DeclareMathOperator{\cala}{\mathcal{A}}
\DeclareMathOperator{\calc}{\mathcal{C}}
\DeclareMathOperator{\cald}{\mathcal{D}}
\DeclareMathOperator{\calo}{\mathcal{O}}

\DeclareMathOperator{\calh}{\mathcal{H}}

\DeclareMathOperator{\wt}{wt}

\DeclareMathOperator{\lmod}{\!\text{-}\mathsf{mod}}
\DeclareMathOperator{\rmod}{\mathsf{mod}\text{-}\!}
\DeclareMathOperator{\lproj}{\!\text{-}\mathsf{proj}}
\DeclareMathOperator{\rproj}{\mathsf{proj}\text{-}\!}
\DeclareMathOperator{\proj}{\mathsf{proj}}

\newtheorem*{theorem*}{Theorem}
\newtheorem{theorem}{Theorem}[section]

\newcommand{\newsharedtheorem}[5]{%
  \newaliascnt{#1}{theorem}%
  \newtheorem{#1}[#1]{#2}%
  \aliascntresetthe{#1}%
  \crefname{#1}{#2}{#5}%
  \Crefname{#1}{#2}{#5}%
}

\crefname{theorem}{Theorem}{Theorems}
\Crefname{theorem}{Theorem}{Theorems}

\newsharedtheorem{lemma}{Lemma}{lemma}{lemmas}{Lemmas}
\newsharedtheorem{proposition}{Proposition}{proposition}{propositions}{Propositions}
\newsharedtheorem{corollary}{Corollary}{corollary}{corollaries}{Corollaries}

\theoremstyle{definition}
\newsharedtheorem{definition}{Definition}{definition}{definitions}{Definitions}
\newsharedtheorem{example}{Example}{example}{examples}{Examples}
\newsharedtheorem{remark}{Remark}{remark}{remarks}{Remarks}
\newsharedtheorem{observation}{Observation}{observation}{observations}{Observations}
\newsharedtheorem{convention}{Convention}{convention}{conventions}{Conventions}

\numberwithin{equation}{section}

\title{One-point functions for \(C_2\)-cofinite VOAs: pseudo-traces and trace spaces of projective modules}

\begin{document}
\author{Max-Niklas Steffen}
\date{}
\maketitle
\vspace{-0.8cm}
\begin{center}
\textit{Fachbereich Mathematik \\
Universität Hamburg \\
Bereich Algebra und Zahlentheorie \\
Bundesstraße 55 \\
D-20146 Hamburg, Germany \\}
\end{center}

\begin{abstract}
We study the space of one-point functions on the torus for a possibly nonrational \(C_2\)-cofinite
vertex operator algebra \(V\) by relating it to a trace object of the subcategory of projective objects in the representation category of \(V\). We identify the dual of the trace space with symmetric functions on the endomorphism algebra \(E\) of a projective generator. Motivated by the Gainutdinov--Runkel conjecture, recently established using different methods by Gui and Zhang, we present a complementary representation-theoretic approach based on Arike--Nagatomo pseudo-traces. In this framework, we prove surjectivity of the Gainutdinov--Runkel map from symmetric functions on \(E\) to one-point functions. Under the additional assumption of separated conformal weights modulo \(\mathbb{Z}\), we also prove injectivity, using projective-cover techniques inspired by Huang.

\end{abstract}
\tableofcontents


\section{Introduction}
The concept of a vertex operator algebra (VOA) was introduced in the 1980s by Borcherds in \cite{Bo}, and Frenkel, Lepowsky and Meurman in \cite{FLM}. In essence, a \textit{vertex operator algebra} is an \(\mathbb{N}\)-graded \(\mathbb{C}\)-vector space \(V\) with a linear map \(Y\), assigning to every element \(v \in V\) an operator-valued formal Laurent series \(Y(v, z)\)
that satisfies algebraic identities encoding weak versions of associativity and commutativity. A VOA carries an action of the Virasoro algebra, induced by a distinguished element in \(V\), referred to as the conformal vector or Virasoro element.

A fundamental modular invariance result in the theory of VOAs was shown by Zhu in \cite{Z} for \textit{rational} and \(C_2\)\textit{-cofinite} (and nonnegatively graded) VOAs \(V\), where the first condition demands a semisimple representation theory and the second imposes finite-dimensionality of a certain quotient of \(V\). Zhu considered the space of one-point functions on the torus \(\calc_1(V)\), a space of maps \(S \colon V \times \mathbb{H} \rightarrow \C\) linear in the first and holomorphic in the second argument, which satisfy algebraic and differential equations with modular form coefficients. The space of one-point functions comes with a natural action of the modular group \(\mathrm{SL}_2(\mathbb{Z})\), and Zhu established a canonical basis of this space given by forming the trace over certain grade-preserving endomorphisms of irreducible \(V\)-modules. 

Dropping the rationality condition, trace functions are not sufficient to span \(\calc_1(V)\). As Miyamoto proved in \cite{Miy}, one solution is to replace trace functions by \textit{pseudo-trace functions}, which are parametrized by symmetric functions on higher Zhu algebras \(A_n(V)\), a family of associative algebras built from \(V\) indexed by the nonnegative integers. Here, \(V\) is required to come with a nontrivial action of the Virasoro algebra on its simple modules. Miyamoto showed that in this nonsemisimple regime, pseudo-trace functions are powerful enough to generate the space of one-point functions. As higher Zhu algebras are, in general, complicated, Arike and Nagatomo formulated pseudo-trace functions in terms of suitable subalgebras of endomorphism algebras of logarithmic \(V\)-modules in \cite{AN}. 

The latter formulation lends itself nicely to approaching the matter from a categorical point of view, a perspective taken by Gainutdinov and Runkel in \cite{GR}. Under additional assumptions on the VOA (\(V\) simple as a module and isomorphic to its contragredient module, \(V_0 = \C\mathbf{1}\)), the authors consider a suitable representation category \(\calc = \Rep(V)\), earlier studied by Huang in \cite{Hua}, and study one-point functions by considering symmetric functions on the endomorphism algebra \(E\) of a projective generator \(G\). The authors conjecture that \(\calc\) is a pivotal, rigid and factorisable finite tensor category (only part of this statement is known). Under these assumptions, the space of symmetric functions \(\mathcal{S}(E)\) is isomorphic to the hom-space \(\calc(\mathbf{1}, L)\), where \(L\) is the Lyubashenko coend. Mapping a symmetric function to the associated pseudo-trace function provides a canonical map \(\Phi_G \colon \mathcal{S}(E) \rightarrow \calc_1(V)\). From a conjectured relation between 3d TQFT and 2d chiral conformal field theory, it is argued in \cite{GR} that \(\calc(\mathbf{1}, L)\) and \(\calc_1(V)\) must be isomorphic. Consequently, the authors conjecture that \(\Phi_G\) is an isomorphism between symmetric functions and one-point functions. 

Recently, the Gainutdinov--Runkel conjecture on \(\Phi_G\) was proved by Gui and Zhang in \cite{GZ} for \(\mathbb{N}\)-graded \(C_2\)-cofinite VOAs, without assuming simplicity of \(V\) or self-contragredience. Precisely, the authors show, using sewing-factorization techniques for conformal blocks, that the pseudo-trace construction encoded by \(\Phi_G\) gives an isomorphism from symmetric functions on a projective generator to the vector space of vacuum torus conformal blocks, in the precise sense given in the paper.

In the present article, we present an alternative take on the questions outlined above, in a language close to that of \cite{GR}. We work in a setting where we require the VOA \(V\) to be \(C_2\)-cofinite
with nontrivial Virasoro action on simples, and do not impose the additional conditions imposed in \cite{GR}. 
By results of \cite{Hua}, \(\mathcal{C} = \Rep(V)\) is a finite abelian category, and therefore admits a projective generator. Instead of \(\calc(\mathbf{1}, L)\), we consider a different categorical object, namely the trace space \(\Tr(\proj(\calc)) = \int^{X \in \proj(\calc)} \calc(X, X)\), a coend of the hom-functor on the subcategory of projective objects. We identify its dual with the space of symmetric functions \(\mathcal{S}(E)\) on the endomorphism algebra \(E\) of a projective generator \(G\) (see \cref{subSecTr}), thereby obtaining a description independent of the choice of \(G\). Note that this yields, under the assumptions of \cite{GR} and depending on the conjectures there, an isomorphism \(\Tr(\proj(\calc))^* \cong \calc(\mathbf{1}, L)\). Such an identification is not available in our setting.

We further investigate the map \(\Phi_G\) defined above. Making use of Arike--Nagatomo pseudo-trace theory in the framework of \cite{GR}, we prove directly that this map surjects onto \(\calc_1(V)\) (\cref{propSC1iso}). Under the additional assumption that the conformal weights of \(V\) are separated modulo \(\mathbb{Z}\), we give an independent proof that \(\Phi_G\) is also injective (\cref{propEmb} and \cref{corIso}), and therefore an isomorphism identifying \(\mathcal{S}(E)\) with \(\mathcal{C}_1(V)\) as conjectured by Gainutdinov and Runkel. The proof is based on constructing a projective generator from projective covers inspired by \cite{Hua}, and embedding its endomorphism algebra into the top part of the generalized eigenspace decomposition of a Zhu algebra \(A_n(V)\) for a sufficiently large integer \(n\). We expect the restriction on the conformal weights to be of technical rather than conceptual nature, and we expect our methods to generalize to the nonseparated case. Summarizing our findings, we have:

\begin{theorem*} (\cref{thmMain}).
    Let \(V\) be a \(C_2\)-cofinite
    VOA such that the Virasoro algebra acts nontrivially on every simple module. Then, there is a canonical epimorphism 
    \begin{align*}
        \Gamma_G \colon \Tr(\proj\Rep(V))^* \rightarrow \calc_1(V),
    \end{align*}
    for every projective generator \(G\) of \(\Rep(V)\), factoring through \(\Phi_G \colon \mathcal{S}(E) \rightarrow \calc_1(V)\), and \(\Gamma_G\) is an isomorphism if \(V\) has separated conformal weights (modulo \(\mathbb{Z}\)).
\end{theorem*}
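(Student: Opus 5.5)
The plan has three parts: identify the source of \(\Gamma_G\), prove surjectivity with Arike--Nagatomo pseudo-traces, and prove injectivity under the separation hypothesis.

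\textbf{Step 1: the trace space.} By \cite{Hua}, \(\calc = \Rep(V)\) is a finite abelian category, so it has a projective generator \(G\); put \(E = \End(G)\). Every projective object is a direct summand of some \(G^{\oplus n}\). Hence the coend \(\Tr(\proj(\calc))\) is the quotient of \(\bigoplus_{X}\calc(X,X)\) by the relations \(fg - gf\) for \(f\colon X\to Y\), \(g\colon Y\to X\). Using idempotents \(X \to G^{\oplus n} \to X\), every class reduces to a class in \(E/[E,E]\), the zeroth Hochschild homology of \(E\). So \(\Tr(\proj(\calc)) \cong E/[E,E]\), and dually \(\Tr(\proj(\calc))^* \cong \mathcal{S}(E)\). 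This is presumably the identification of \cref{subSecTr}; it is canonical and compatible with changing \(G\) via Morita equivalence. We then define \(\Gamma_G = \Phi_G \circ (\text{this iso})\), where
\[
\Phi_G(\varphi)(v,\tau) = \mathrm{str}^{\varphi}_G\bigl(o(v)\, q^{L_0 - c/24}\bigr)
\]
is the Arike--Nagatomo pseudo-trace. To see that this lands in \(\calc_1(V)\), we need three facts:
\begin{enumerate}[(a)]
\item \(G\) is a logarithmic module that is interlocked with the symmetric function. We get this from \(E\) acting on \(G\), with \(L_0^{\mathrm{nil}} \in\) the center of \(E\) up to the needed condition.
\item Convergence and the modular-coefficient ODE hold by Miyamoto/AN.
\item Well-definedness: every \(\varphi \in \mathcal{S}(E)\) yields an AN-admissible symmetric linear form. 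Here we must use that the nilpotent part of \(L_0\) lies in \(E\), so that \(\varphi(\,\cdot\,(L_0-h)^k)\) makes sense.
\end{enumerate}

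\textbf{Step 2: surjectivity.} Here I would invoke Miyamoto's theorem, in the Arike--Nagatomo formulation. It says that \(\calc_1(V)\) is spanned by pseudo-trace functions \(\mathrm{str}^{\psi}_M\), where \(M\) ranges over finite-length logarithmic modules, the algebra \(P\) is the subalgebra of \(\End_V(M)\) that makes \(M\) projective over \(P\), and \(\psi\) is a symmetric linear form on \(P\). Given such \((M,P,\psi)\), I take a projective cover \(Q \twoheadrightarrow M\), realized as a summand of \(G^{\oplus n}\). I then transport \(\psi\) to a symmetric function on \(\End(G^{\oplus n}) = M_n(E)\) and take its trace-reduction to \(E\). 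The point to check is that pseudo-traces are invariant under this move: the pseudo-trace along a projective \(P\)-module only depends on the class in \(HH_0\), and it can be computed after lifting endomorphisms along the cover. This yields a preimage, so \(\Phi_G\) is surjective.

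\textbf{Step 3: injectivity under separated conformal weights.} I would argue by dimension count or, better, directly.
\begin{enumerate}[(a)]
\item Following \cite{Hua}, build \(G = \bigoplus_i P_i\) from projective covers \(P_i\) of the simples \(S_i\), each generated in a single top degree \(h_i + \mathbb{N}\).
\item Separation modulo \(\mathbb{Z}\) means each \(\operatorname{Hom}(P_i,P_j)\) is supported where \(h_i \equiv h_j\), with no cross-degree mixing. For \(n\) large, restriction to the degree-\(\leq n\) part therefore embeds \(E\) into the generalized-eigenspace top block of \(A_n(V)\), compatibly with traces.
\item Suppose \(\Phi_G(\varphi) = 0\). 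Expanding in \(q\), the coefficient of \(q^{h-c/24}\) times powers of \(\tau\) (the log terms) gives \(\varphi\bigl(o(v)|_{\text{top}}\,(L_0-h)^k\bigr) = 0\) for all \(v\in V\) and \(k\). Separation ensures that different \(h_i\) classes do not cancel between terms.
\item Since the image of \(A_n(V)\) surjects onto the relevant block containing \(E\), this gives \(\varphi = 0\).
\end{enumerate}

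I expect Step 3 to be the main obstacle, and specifically two points in it. The first is the surjectivity of \(A_n(V)\) (via zero-modes) onto \(E\) acting on the truncated degrees. The second is disentangling the logarithmic \(\tau\)-powers so that each \(\varphi\)-coefficient can be extracted. I would try first a Vandermonde-type argument in \(q^{h}\tau^k\), using linear independence of these functions.
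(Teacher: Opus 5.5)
Your Step 2, which carries the surjectivity claim, does not go through as written. The datum \(\psi\) is a symmetric function on \(P \subseteq \End_V(M)\), and nothing in your construction turns it into a functional on \(\End_V(G^{\oplus n})\) or on \(E\).
\begin{itemize}
\item There is no algebra map from \(P\) to \(\End_V(Q)\) compatible with a projective cover \(Q \twoheadrightarrow M\). In general the \(P\)-action does not even lift to \(Q\). For instance, over \(A = \mathbb{C}[t]/(t^3)\) the module \(M = A/(t^2)\) is free over \(P = \mathbb{C}[x]/(x^2)\), with \(x\) acting by \(t\), yet every lift \(t + ct^2\) of \(x\) to \(\End_A(A)\) has nonzero square.
\item More fundamentally, pseudo-traces on \(M\) are not computed by lifting along a cover. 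Take \(P = \mathbb{C}\) and \(M\) simple. The trace function of \(M\) equals \(\xi^{\phi,E}_G\), where \(\phi\) is the character of the right \(E\)-module \(\Hom_V(G,M)\). The reason is the isomorphism \(M \cong \Hom_V(G,M) \otimes_E G\), not a lifting argument.
\item Your remark that pseudo-traces depend only on classes in \(HH_0\) is about \(HH_0(P)\). What you need is a map \(HH_0(E) \to HH_0(P)\), and the proposal does not supply one.
\end{itemize}

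The missing idea is the one in \cref{propPGPT}: the \((P,E)\)-bimodule \(\mathcal{F}(M) = \Hom_V(G,M)\). The argument runs in four steps.
\begin{enumerate}
\item Show \(\mathcal{F}(M)\) is projective over \(P\). A split surjection \(G^{\oplus m} \twoheadrightarrow E\) of left \(E\)-modules, together with the natural \(P\)-isomorphism \(M \cong \mathcal{F}(M)\otimes_E G\), makes \(\mathcal{F}(M)\) a \(P\)-summand of \(M^{\oplus m}\).
\item Set \(\phi(e) = \tr^{\psi}_{\mathcal{F}(M)}(f \mapsto f \circ e)\).
\item Check \(\phi \in \mathcal{S}(E)\) using a \(P\)-coordinate system.
\item Derive \(\xi^{\phi,E}_G = \xi^{\psi,P}_M\) from \(M \cong \mathcal{F}(M)\otimes_E G\) and cyclicity of pseudo-traces.
\end{enumerate}
You also need \(G\) to be projective over \(E\) for \(\xi^{\varphi,E}_G\) to be defined at all. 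Once you have that, Arike--Nagatomo applies directly with \(P = E\), and your remarks on \(L_0^{\mathrm{nil}}\) are not needed.

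This gap also affects the isomorphism claim. Passing from the specific generator in your Step 3 to an arbitrary \(G\) requires surjectivity of \(\Phi_G\) (or this same bimodule transfer) together with Morita invariance of \(HH_0\).

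Step 3 itself takes a different route from the paper, which counts dimensions. There, \(E(G_n)^{\mathrm{op}} \cong E_n = \bigoplus_{r\in\Lambda} A_n(V)_{(r+n)}\) lies in \(\ker(A_n(V) \to A_{n-1}(V))\), and \(E_n \cap [A_n(V),A_n(V)] = [E_n,E_n]\). Miyamoto's dimension formula then gives \(\dim\mathcal{S}(E) \le \dim\calc_1(V)\).

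Your direct coefficient extraction avoids that formula and can work, but only if you read off the \(q^{r+n-c_V/24}\) coefficients rather than the lowest-weight ones. On \(G_n(n) \cong E_n\), which is free of rank one over \(E \cong E_n^{\mathrm{op}}\), the zero-mode \(o(v)\) acts by left multiplication by \([v]1_{E_n}\). Hence \(\tr^{\varphi}_{G_n(n)}(o(v))\) is \(\varphi\) evaluated at the corresponding element of \(E\). The surjection \(A_n(V) \to E_n\) you worried about is then just the block projection. With a basic generator, as in your Step 3(a), \(G(n)\) is only projective over \(E\), so you need an extra Morita step.
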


The paper is based in part on the author's master's thesis \cite{S}. We intend to offer a complementary perspective to \cite{GZ} to show that the Gainutdinov--Runkel conjecture can also be approached by more classical, representation-theoretic means encountered in VOA theory, such as Zhu algebras, pseudo-traces, universal modules and Huang's projective covers. Let us briefly outline the structure of the article: 
In \cref{secPrel}, we recall from the literature the necessary VOA machinery, including the relevant notions of VOA modules and their properties, the space of one-point functions, as well as the theory of higher Zhu algebras. In \cref{secPseudo}, we collect necessary background on pseudo-traces, and state the main modular invariance result of \cite{Miy}. We end this section by restating Miyamoto's result using Arike--Nagatomo pseudo-traces. Our main results, as outlined above, are contained in \cref{secCat}.
\subsubsection*{Acknowledgments}
I thank Ingo Runkel for his guidance and support during my master's thesis, from which much of this work originated. I also thank Hannes Knötzele, Masahiko Miyamoto, Sven Möller, Christoph Schweigert, and Hiroshi Yamauchi for helpful discussions and correspondence. I gratefully acknowledge scholarship support from the Manke-Förderstiftung during my master's studies.
\section{Algebraic Preliminaries}
\label{secPrel}
\subsection{Notational conventions on additive categories}
For a category \(\calc\), we denote by \(\calc(c, c') = \Hom_{\calc}(c, c')\) its set of morphisms from \(c\) to \(c'\), where \(c, c' \in \calc\) are two objects. We will often consider \(\calc\) to be a \(\C\)-linear additive category, in which case the sets \(\calc(c, c')\) are naturally vector spaces. Standard examples that will appear repeatedly are the (abelian) \(\C\)-linear categories \(\vect_{\C}\) of \(\C\)-vector spaces and its subcategory \(\vect_{\C}^{\mathrm{fd}}\) of finite-dimensional vector spaces, the categories \(A\lmod\) and \(\rmod A\) of finitely generated left and right \(A\)-modules, respectively, where \(A\) is a finite-dimensional associative algebra over \(\C\), as well as their additive \(\C\)-linear subcategories \(A\lproj\) and \(\rproj A\) of finitely generated projective objects.

Generalizing the last example, for \(\calc\) an abelian category, we denote by \(\proj(\calc) \subset \calc\) its (additive) subcategory of projective objects. In this notation, one has \(\proj(A\lmod) = A\lproj\), and an analogous relation holds for right modules.

We denote left \(A\)-modules by \(_A W\) and right \(A\)-modules by \(W_A\), but often leave out the subcripts when it is clear from the context whether we are dealing with left or right modules.
\subsection{Vertex operator algebras and modules}
The basic algebraic object we are interested in is a \textit{vertex operator algebra} (VOA), which is a tuple \((V, Y, \mathbf{1}, \omega)\) consisting of an \(\mathbb{N}\)-graded\footnote{We denote by \(\mathbb{N}\) the nonnegative integers, including zero. Other grading conventions exist, the most standard is that of a \(\mathbb{Z}\)-grading.} \(\C\)-vector space \(V = \oplus_{m = 0}^{\infty}V_m\) with \(\dim_{\C}V_m < \infty\), a map \(Y(-,z) \colon V \rightarrow \End_{\C}(V)[[z, z^{-1}]]\) from the VOA to the space of Laurent series with coefficients given by endomorphisms of \(V\), as well as two distinguished vectors, the \textit{vacuum vector} \(\mathbf{1} \in V_0\) and the \textit{Virasoro element} \(\omega \in V_2\). This structure satisfies various nontrivial algebraic relations -- among those, we have \(Y(\mathbf{1}, z) = \id_V\) and, writing \(Y(v, z) = \sum_{n \in \mathbb{Z}}v_{(n)}z^{-n-1}\) and \(L_n = \omega_{(n+1)}\), the operators \(L_n\) for \(n \in \mathbb{Z}\) define a representation of the Virasoro algebra on \(V\).
We do not recall the full list of properties here, as we will hardly work directly with the definition, and refer to, for example, \cite[Definition 3.1.1, Definition 3.1.22]{LL} for further details.
For \(v \in V_m\), we call the integer \(\wt(v) = m\) the \textit{weight} of \(v\). The weight is compatible with the Virasoro action in the sense that \(L_0v = \wt(v) v\) for a homogeneous element \(v \in V\).

There are different notions of \(V\)-modules that appear in the literature. The basic notion is that of a weak \(V\)-module \((M, Y_M)\), where \(M\) is a \(\C\)-vector space and \(Y_M \colon V \rightarrow \End_{\C}(M)[[z, z^{-1}]]\) a map satisfying a list of properties in relation to \(V\). In particular, \(M\) comes with an action of the Virasoro algebra, and we will keep the notation for the modes of \(Y = Y_M\) and the Virasoro modes from above. There are several ways to strengthen the notion of weak \(V\)-modules, for which one imposes various gradings on \(M\) (for additional details, see, for instance, \cite{DLM1, DLM2, AN}).
\begin{definition}
\begin{enumerate}[(i)]
   \item A weak \(V\)-module \(M\) is said to be \textit{admissible} if there is a grading \(M = \bigoplus_{k \in \mathbb{N}}M(k)\) satisfying \( v_{(m)}M(k) \subseteq M(k + \wt v - m - 1)\).
    \item A weak \(V\)-module \(M\) is a \textit{logarithmic \(V\)-module} if there is a grading \(M = \bigoplus_{\lambda \in \mathbb{C}}M_{(\lambda)} \), where \( M_{(\lambda)} = \{ w \in M|(L_0 - \lambda )^kw = 0 \text{ for some } k \in \mathbb{N}\setminus \{0\} \) is a generalized eigenspace of dimension \(\dim_\mathbb{C}M_{(\lambda)} < \infty\) and for each \(\lambda \in \mathbb{C}\) and \(m\) small enough, \(M_{(m + \lambda)} = 0\).
    \item A logarithmic module is called \textit{ordinary} if the generalized eigenspaces are eigenspaces, that is \(L_0 w = \lambda w\) for any \(w \in M_{(\lambda)}\). To distinguish these from the logarithmic modules, we write \(M_{\lambda} = M_{(\lambda)}\) in this case.
\end{enumerate}
\end{definition}
In the above definition, each of the listed notions of \(V\)-modules is stronger than the previous. For this, it suffices to note that every logarithmic module can be turned into an admissible one by defining an appropriate \(\mathbb{N}\)-grading and using the commutator identity \([L_0, v_{(m)}] = (\wt(v) - m - 1)v_{(m)}\), which is true for every weak \(V\)-module. Note that from this identity it also follows that the \textit{zero-mode} \(o(v) = v_{(\wt v - 1)}\) preserves the grading when the module is logarithmic or admissible.

\looseness = -1 A weak \(V\)-module \(M\) is said to be \textit{finitely generated} if there is a finite subset \(T \subseteq M\) so that \(M\) is spanned by elements \(v^1_{(m_1)} \hdots v^k_{(m_k)}t\), where \(v^i \in V\), \(m_i \in \mathbb{Z}\) and \(t \in T\). A \textit{simple} weak \(V\)-module is one that does not have nontrivial submodules. Simple weak \(V\)-modules are generated by a single element.

A \textit{morphism of weak} \(V\)-\textit{modules} is a linear map \(f \colon M_1 \rightarrow M_2\) satisfying \(f(v_{(m)}w) = v_{(m)}f(w)\) for \(v \in V, w \in M_1, m \in \mathbb{Z}\). For admissible \(V\)-modules, we demand additionally that the morphisms be grade-preserving. For logarithmic modules (and thereby ordinary ones), this is automatic. We write \(\Hom_V(M_1, M_2)\) for the relevant space of morphisms.

Every simple ordinary 
\(V\)-module has the form \(M = \oplus_{m \in \mathbb{N}}M_{r + m}\), with \(M_r \neq 0\). The complex numbers \(r \in \C\) that occur in this way are called the \textit{conformal weights} of \(V\) (or lowest weights, alternatively), and we denote the set of conformal weights for a VOA by \(\Lambda\).

\subsection{Finiteness conditions for VOAs}
\label{secFin}
A vertex operator algebra \(V = \oplus_{m \geq 0}V_m\) is called \textit{rational} if every admissible \(V\)-module can be written as a direct sum of simple admissible \(V\)-modules. This is a strong finiteness assumption on \(V\). Rationality together with the assumption of a different finiteness condition, \(C_2\)-\textit{cofiniteness}, was assumed by Zhu in \cite{Z} to obtain his modular invariance result. Miyamoto later generalized this by dropping the rationality requirement (\cite{Miy}) (see \cref{thmMiy}). 

We recall the definition of \(C_n\)-finiteness.
\begin{definition}
Let \(V\) be a vertex operator algebra. We define for \(n \in \mathbb{N}, n \geq 2\) the subspace \(C_n(V) = \spann_{\mathbb{C}}\{u_{(-n)}v | u, v \in V\}\) of \(V\).
Moreover, we set for \(n = 1\)
\begin{align*}
    C_1(V) = \spann_{\mathbb{C}}\{u_{(-1)}v, L_{-1}w| u, v \in V_+, w \in V\}, 
\end{align*}
where \(V_+ = \bigoplus_{m > 0}V_m\). We call the vertex operator algebra \(V\) \(C_n\)-\textit{cofinite} if \(\dim_{\mathbb{C}} V/C_n(V) < \infty\).
\end{definition}
There is a chain of inclusions \(\hdots C_{n+1}(V) \subseteq C_{n}(V) \subseteq \hdots \subseteq C_{2}(V)\). If \(V\) is \textit{of CFT-type}, which says that \(V_0 = \C \mathbf
1\), then \(C_2(V) \subseteq C_1(V)\), but in general, this is not the case. 
Still, because we are assuming VOAs graded by the nonnegative interges, we have the following.
\begin{lemma}
    \label{lemC2C1}
    A (nonnegatively graded) \(C_2\)-cofinite VOA \(V\) is also \(C_1\)-cofinite.
\end{lemma}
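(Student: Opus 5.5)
The plan is to show \(C_2(V) + V_0 + V_1 \subseteq C_1(V) + V_0\), or more precisely that \(V = C_1(V) + V_0 + V_1 + (\text{finite-dim complement of } C_2)\). Since \(\dim V_0, \dim V_1 < \infty\), it suffices to prove that \(C_2(V) \cap V_{\geq 2}\), or at least the span of homogeneous elements \(u_{(-2)}v\) of weight \(\geq 2\) up to a finite-dimensional error, lies in \(C_1(V)\). Then \(V/C_1(V)\) is a quotient of \(V/(C_2(V)\cap C_1(V))\). Since \(C_2(V)\cap C_1(V)\) has finite codimension in \(C_2(V)\), and \(C_2(V)\) has finite codimension in \(V\), this gives \(\dim V/C_1(V)<\infty\).

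The key identity is the \(L_{-1}\)-derivative property \((L_{-1}u)_{(n)} = -n\,u_{(n-1)}\). With \(n=-1\) it gives \(u_{(-2)}v = (L_{-1}u)_{(-1)}v\). If \(u \in V_+\), i.e.\ \(\wt u \geq 1\), then \(L_{-1}u \in V_+\). So if moreover \(v \in V_+\), then \(u_{(-2)}v \in C_1(V)\) by definition.

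The remaining cases are \(u \in V_0\) or \(v \in V_0\).

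For \(v \in V_0\) with \(u \in V_+\), skew symmetry \(u_{(-2)}v = \sum_{j\geq 0} (-1)^{j+1}\frac{L_{-1}^{j+2}}{(j+2)!}\cdots\) expresses the element in terms of \(L_{-1}(\cdot)\) and \(v_{(k)}u\). More cleanly, \(Y(u,z)v = e^{zL_{-1}}Y(v,-z)u\) shows that
\[
u_{(-2)}v \in L_{-1}V + \spann\{v_{(-2)}u\}.
\]
Since \(L_{-1}V \subseteq C_1(V)\), this reduces the case to \(u\in V_0\).

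For \(u\in V_0\), the element \(u_{(-2)}v\) has weight \(\wt v + 1\). Here I would use that \(V_0\) is finite-dimensional and that \(u_{(-2)} = (L_{-1}u)_{(-1)}\) with \(L_{-1}u \in V_1 \subset V_+\). So if \(v \in V_+\) we are done as before. If \(v\in V_0\), then \(u_{(-2)}v \in V_1\), which is finite-dimensional, and such terms can be absorbed into the finite error.

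Hence \(C_2(V) \subseteq C_1(V) + V_1\). Therefore \(\dim V/C_1(V) \leq \dim V/C_2(V) + \dim V_1 < \infty\).

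The main obstacle is checking that the derivative identity applies uniformly for \(u\in V_0\) (where \(L_{-1}u\) might vanish, which is harmless). In fact the simplest route avoids skew symmetry entirely. For any \(u\), \(u_{(-2)}v=(L_{-1}u)_{(-1)}v\) with \(L_{-1}u\in V_+\), so the only failing case is \(v\in V_0\). That case lands in the finite-dimensional space \(\bigoplus_{m} u_{(-2)}V_0\), and \(u_{(-2)}v\) is of weight \(\wt u+1\), so this is not obviously finite-dimensional. This is why I would use skew symmetry there: rewrite \(u_{(-2)}v\) with \(v\in V_0\) as \(v_{(-2)}u\) modulo \(L_{-1}V\). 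Applying the derivative identity to \(v_{(-2)}u=(L_{-1}v)_{(-1)}u\) puts it in \(C_1(V)\) when \(u\in V_+\), leaving only \(u,v\in V_0\), which yields weight \(1\).
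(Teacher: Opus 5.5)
Your argument is correct and uses the same case split as the paper: both of $u,v$ of positive weight, exactly one in $V_0$, or both in $V_0$. The only difference is that you derive the needed instances directly from $(L_{-1}u)_{(-1)}=u_{(-2)}$ and skew symmetry modulo $L_{-1}V$, where the paper cites Li's Lemma~3.1 with $(r,s,k)=(0,0,2),(1,0,1),(0,1,1)$. Your residual space is the accurate one: for $u,v\in V_0$ the element $u_{(-2)}v$ has weight $1$, so the right containment is $C_2(V)\subseteq C_1(V)+V_1$ as you state, not $C_1(V)+V_0$ as the paper writes, though this does not affect the conclusion. Your opening claim $C_2(V)+V_0+V_1\subseteq C_1(V)+V_0$ is never proved or used, so drop it.
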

\begin{proof}
    Let \(u_{(-2)}v \in C_2(V)\) and assume without loss of generality that \(u, v \in V\) are homogeneous, so \(u \in V_i, v \in V_j\). By \cite[Lemma 3.1]{Li}, \(u_{(-r-s-k)}v \in C_1(V)\) under the assumption that \(k \geq 1\) and \(r, s \in \mathbb{N}\) such that \(L_{(-1)}^ru, L_{(-1)}^sv \in V_+\). Notice that \(L_{-1} = \omega_{(0)}\) shifts the grading by 1 in the positive direction. Now, if both \(i\) or \(j\) are strictly bigger than 0, setting \(r = s = 0, k = 2\) implies \(a_{(-2)}b \in C_1(V)\). If only one is strictly bigger than 0, setting the respective index \(r\) or \(s\) to 1 and the other to 0 and \(k = 1\), the same conclusion holds. Thus, \(C_2(V) \subseteq C_1(V) + V_0\). Since \(V_0\) is finite-dimensional, \(C_2\)-cofiniteness implies the claim.
\end{proof}
In this article, we will be concerned exclusively with the cases \(n = 1, 2\). For an extended discussion of \(C_n\)-cofiniteness properties, see, for instance, \cite[Section 3]{Li}.

\subsection{The space of one-point functions}
Here, we recall the definition of the space of one-point functions. We will not work directly with this definition, but we include it for completeness.
In \cite[Theorem 4.2.1]{Z}, Zhu constructs for a given VOA \((V, Y, \mathbf{1}, \omega)\) an associated VOA \((V, Y[-, z], \widetilde{\omega}, \mathbf{1})\) with a transformed grading. We denote the corresponding transformed vertex operators as \(Y[v, z] = \sum_{m \in \mathbb{Z}}v[m]z^{-m-1}\) for \(v \in V\) and the Virasoro modes by \(Y[\widetilde{\omega}, z] = \sum_{m \in \mathbb{Z}}L_{[m]}z^{-m-2}\). 

Further, denote by \(\mathbb{H} = \{\tau \in \mathbb{C}\,|\, \Im(\tau) > 0\}\) the complex upper half plane and by \(E_{2k} \colon \mathbb{H} \rightarrow \mathbb{C}\) the normalized Eisenstein series for \(k \geq 1\). Recall that \(E_{2k}\) are modular forms only for \(k \geq 2\), and \(E_4\) and \(E_6\) generate the algebra of modular forms \(\C[E_4, E_6]\).

With these preliminaries, we can give the definition of the vector space of one-point functions (see, for instance, \cite[Definition 3.1]{AN}).
\begin{definition}
\label{def1P}
Let \(\mathcal{O}(V)\) be the \(\mathbb{C}[E_4, E_6]\)-submodule of \(V \otimes_{\mathbb{C}} \mathbb{C}[E_4, E_6]\) generated by the elements 
\begin{align*}
    u[0]v, ~~~ u[-2]v + \sum_{k=2}^{\infty}(2k - 1)u[2k-2]v \otimes E_{2k}.
\end{align*}
The \textit{space of one-point functions on the torus} \(\mathcal{C}_1(V)\) consists of maps \({S \colon V \otimes_{\mathbb{C}} \mathbb{C}[E_4, E_6] \times \mathbb{H}} \rightarrow \mathbb{C}\) satisfying the following list of properties:
\begin{enumerate}[(C1)]
    \item For any \(v \in V\), the function \(S(v, -)\) is holomorphic on the complex upper half plane \(\mathbb{H}\).
    \item \label{C2} \sloppy For any \(\tau \in \mathbb{H}\), modular forms \(f_1, \hdots, f_k \in  \mathbb{C}[E_4, E_6]\) and vectors \(v_1, \hdots, v_k \in V\), the condition \({S(\sum_{i = 1}^kv_i \otimes f_i, \tau) = \sum_{i = 1}^kf_i(\tau)S(v_i, \tau)}\) holds.
    \item \(S(x, \tau) = 0\) for every \(x \in \mathcal{O}(V)\).
    \item \label{C4} For any \(v \in V\), \(\tau \in \mathbb{H}\) and \(q = e^{2\pi i \tau}\), the map \(S\) satisfies the differential equation
    \begin{align*}
        S(L_{[-2]}v, \tau) = (2\pi i)^2q\partial_qS(v, \tau) + \sum_{k = 1}^{\infty}E_{2k}(\tau)S(L_{[2k-2]}v, \tau).
    \end{align*}
\end{enumerate}
\end{definition}
In \cite[Theorem 5.3]{DLM3} it was proved that the space of one-point functions \(\mathcal{C}_1(V)\) carries a natural action by the modular group \(\mathrm{SL}_2(\mathbb{Z})\).
\begin{example}
    \label{exComm}
    Let \(B\) be a finite-dimensional algebra and consider the VOA \(V^B\) with underlying set \(B\), with VOA-structure induced by setting \(\mathbf{1} = 1_B, Y(a, z)b = a \cdot b\) and \( \omega = 0\) (see \cite[Remark 3.4.5]{LL}). Since \(L_n = 0\) for all \(n \in \mathbb{N}\), one has \(V^B_0 = V^B\) and \(V_m^B = 0\) for \(m \neq 0\). Clearly, \(V^B\) is \(C_2\)-cofinite. Here, we have a natural identification \(\calc_1(V^B) \cong B^*\). This follows readily from the defining conditions:

    The assumptions imply for \(a \in V^B\) that \(a_{[m]} = a_{(m)}\), which is trivial except for \(m = -1\). Thus, \(\calo(V^B) = 0\). By \labelcref{C4}, we have further for any \(S \in \calc_1(V^B)\) that \(\partial_{\tau}S(a, \tau) = 0\), so \(S(a, \tau) = \Tilde{S}(a)\) for all \(\tau \in \mathbb{H}\) and a unique \(\Tilde{S} \in B^*\). Since \labelcref{C2} forces \(S(a \otimes f, \tau) = f(\tau)\tilde{S}(a)\) for all \(f \in \C[E_4, E_6]\), the map \(S \mapsto \tilde{S}\) gives an iso of \(\mathrm{SL}_2(\mathbb{Z})\)-modules between \(\calc_1(V^B)\) and \(B^*\), where \(B^*\) is equipped with the trivial action.
\end{example}

\subsection{Higher Zhu algebras and admissible \(V\)-modules}
We recall the definition of (higher) Zhu algebras, which were introduced in \cite{Z, DLM2}. They associate for every nonnegative integer an associative algebra \(A_n(V)\) to a VOA \(V\), capturing much of the representation-theoretic data. For every \(n \in \mathbb{N}\), let \(O_n(V)\) be the span of elements \(u \circ_n v\) and \(L_{-1}u + L_0u\) for homogeneous \(u, v \in V\), where \(\circ_n\) is defined by
\begin{align*}
    u \circ_n v = \Res_z Y(u, z)v\frac{(1+z)^{\wt u + n}}{z^{2n+2}},
\end{align*}
and set \(A_n(V) = V/O_n(V)\). Together with the multiplication on \(V\) defined by the formula
\begin{align*}
    u *_n v = \sum_{m = 0}^{n}(-1)^m\binom{m + n}{n}\Res_z Y(u, z)v\frac{(1+z)^{\wt u + n}}{z^{n + m + 1}},
\end{align*}
which descends onto the quotient, one has the following theorem.
\begin{theorem}[{\cite[Theorem 2.3]{DLM2}}]
    \(A_n(V)\) with the multiplication \(*_n\) is an associative algebra with unit \(\mathbf{1} + O_n(V)\). Moreover, \(\omega + O_n(V)\) is a central element.
\end{theorem}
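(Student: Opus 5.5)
This is the theorem of Dong--Li--Mason, and I would follow the strategy of Zhu's original argument for \(n=0\), adapted to the modified products \(*_n\) and \(\circ_n\). The key steps, in order, are these.

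First, I would show that \(O_n(V)\) is a two-sided ideal for \(*_n\). The starting point is the skew-symmetry identity \(Y(u,z)v = e^{zL_{-1}}Y(v,-z)u\). Combined with \(L_{-1}u+L_0u\in O_n(V)\), it yields the congruence
\begin{align*}
u*_n v - v*_n u \equiv \Res_z Y(u,z)v\,(1+z)^{\wt u-1}\sum_{j} (\text{explicit polynomial in } z^{-1}) \pmod{O_n(V)}.
\end{align*}
I would also establish the auxiliary fact that
\begin{align*}
\Res_z Y(u,z)v\,\frac{(1+z)^{\wt u+n+m}}{z^{2n+2+k}}\in O_n(V) \qquad \text{for all } m\ge k\ge 0.
\end{align*}
This follows by induction on \(k\), using the \(L_{-1}\)-derivative property \(Y(L_{-1}u,z)=\partial_zY(u,z)\), which expresses \(\Res\, Y(L_{-1}u+L_0u,z)v\,(\cdots)\) as a combination of such residues. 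With this lemma in hand, the Jacobi identity (in its residue/associativity form) gives \((u\circ_n v)*_n w\in O_n(V)\) and \(w*_n(u\circ_n v)\in O_n(V)\). The case of the elements \(L_{-1}u+L_0u\) is handled by the same residue manipulations.

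Second, I would prove associativity of \(*_n\) modulo \(O_n(V)\). I would expand \((u*_n v)*_n w\) and \(u*_n(v*_n w)\) as iterated residues, \(\Res_{z_2}\Res_{z_1-z_2}\) and \(\Res_{z_1}\Res_{z_2}\), using the Jacobi identity with the rational weights \((1+z)^{\wt+n}/z^{n+m+1}\). Their difference can then be identified as a combination of terms of the form of the auxiliary lemma.

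I expect this step to be the main obstacle, because of the combinatorial identity among the binomial coefficients \(\binom{m+n}{n}(-1)^m\). To handle it, I would use the alternative description of \(*_n\) as the residue \(\Res_z Y(u,z)v\,\frac{(1+z)^{\wt u+n}}{z}\sum_{m}\cdots\), which can be rewritten via the generating-function identity \(\sum_{m=0}^{n}(-1)^m\binom{m+n}{n}z^{-m} \equiv (1+z)^{-n-1}z^{n}\cdot z^{-n}\) modulo terms of order \(z^{-2n-2}\) and beyond, which are absorbed into \(O_n(V)\). With this rewriting, associativity reduces to Zhu's \(n=0\) computation with \((1+z)^{\wt u}\) replaced by \((1+z)^{\wt u}\cdot(1+z)^{-n-1}\cdots\) truncated.

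Third, I would treat the unit and the centrality of \(\omega\). Using \(Y(\mathbf 1,z)=\id\), only \(m=0\) survives in \(\mathbf 1*_n v\), so \(\mathbf 1*_n v=v\). For \(v*_n\mathbf 1\), I would use \(Y(v,z)\mathbf 1=e^{zL_{-1}}v\) together with \(L_{-1}v\equiv -L_0v\), which gives the congruence \(v*_n \mathbf 1\equiv v\). For \(\omega\), I would combine the commutator formula from the first step with \(\wt\omega=2\). This shows that \(\omega*_n v-v*_n\omega\) equals \(\Res_z Y(\omega,z)v\,(1+z)\cdot(\text{stuff})\), which reduces to a combination of \((L_{-1}+L_0)v\) and elements of the auxiliary form, hence lies in \(O_n(V)\).
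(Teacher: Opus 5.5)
The paper gives no proof of this statement; it simply quotes \cite[Theorem 2.3]{DLM2}. Your outline therefore has to stand on its own, and it does not in two places.

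First, your auxiliary lemma has \(m\) and \(k\) interchanged and is false as written. Take \(u=\mathbf 1\), \(k=0\), \(m=n+1\): it asserts \(\Res_zY(\mathbf 1,z)v\,(1+z)^{2n+1}z^{-2n-2}=v\in O_n(V)\) for every \(v\), i.e.\ \(A_n(V)=0\). The correct statement is \(\Res_zY(u,z)v\,\frac{(1+z)^{\wt u+n+k}}{z^{2n+2+m}}\in O_n(V)\) for \(m\ge k\ge 0\). Equivalently, \(\Res_zY(u,z)v\,(1+z)^{\wt u+n}z^{-2n-2}p(z^{-1})\in O_n(V)\) for every polynomial \(p\). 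Run with this version, skew-symmetry gives the commutator congruence exactly as \(u*_nv-v*_nu\equiv\Res_zY(u,z)v\,(1+z)^{\wt u-1}\), with no polynomial in \(z^{-1}\). That exact form is what makes centrality immediate: for \(u=\omega\) it reads \((L_{-1}+L_0)v\). Your unit computation is fine.

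The genuine gap is associativity. You correctly flag it as the hard step but do not prove it.

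\begin{itemize}
\item Your generating-function identity is off. The truncated sum is the degree-\(n\) part of \((1+z^{-1})^{-n-1}=z^{n+1}(1+z)^{-n-1}\), expanded at \(z=\infty\).
\item More seriously, the tail cannot be ``absorbed into \(O_n(V)\)''. The sum \(\sum_{m>n}\binom{-n-1}{m}\Res_zY(u,z)v\,(1+z)^{\wt u+n}z^{-n-1-m}\) involves modes \(u_{(j)}v\) with \(j\to-\infty\), so it is not an element of \(V\).
\item Resumming it anyway turns \(u*_nv\) into \(\Res_zY(u,z)v\,(1+z)^{\wt u-1}\). For \(u=\omega\) this is \((L_{-1}+L_0)v\in O_n(V)\), which would force \([\omega]=[\omega]*_n[\mathbf 1]=0\).
\end{itemize}
So there is no reduction to Zhu's \(n=0\) computation.

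What does work, uniformly in \(n\), is Zhu's own mechanism. Skew-symmetry together with \(e^{zL_{-1}}x\equiv(1+z)^{-\wt x}x\) gives
\[
v*_nw\equiv\sum_{m=0}^n(-1)^n\binom{m+n}{n}\Res_zY(w,z)v\,\frac{(1+z)^{\wt w+m-1}}{z^{n+m+1}}\pmod{O_n(V)} .
\]
Thus right multiplication by \(w\) is \(y\mapsto\Res_zG(z)Y(w,z)y\) with \(G\in(1+z)^{\wt w-1}z^{-n-1}\C[z^{-1}]\). Left multiplication by \(u\) is \(y\mapsto\Res_zF(z)Y(u,z)y\) with \(F\in(1+z)^{\wt u+n}z^{-n-1}\C[z^{-1}]\). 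Both kernels are independent of the middle argument.

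Since \(O_n(V)\) is a left ideal, \(u*_n(v*_nw)-(u*_nv)*_nw\) is congruent to the commutator
\[
\Res_{z_1}\Res_{z_2}F(z_1)G(z_2)[Y(u,z_1),Y(w,z_2)]v=\sum_{j\ge0}\Res_zY(u_{(j)}w,z)v\,\tfrac1{j!}F^{(j)}(z)G(z).
\]
By Leibniz, the terms of \(\tfrac1{j!}F^{(j)}\) are multiples of \((1+z)^{\wt u+n-i}z^{-n-1-m-(j-i)}\) with \(0\le i\le j\). Using \((1+z)^{j-i}z^{-(j-i)}=(1+z^{-1})^{j-i}\), one gets \(\tfrac1{j!}F^{(j)}G\in(1+z)^{\wt(u_{(j)}w)+n}z^{-2n-2}\C[z^{-1}]\). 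Hence each summand lies in \(O_n(V)\) by the corrected lemma, and no identity among the coefficients \((-1)^m\binom{m+n}{n}\) is needed.
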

We denote the equivalence class of an element \(a \in V\) in \(A_n(V)\) by \([a]\). As shown in \cite[Proposition 2.4]{DLM2}, there are natural algebra epimorphisms \(\pi_n \colon A_n(V) \rightarrow A_{n-1}(V)\) induced by the identity map on \(V\). 

An important result for the representation theory of \(A_n(V)\) is the following.
\begin{theorem}[{\cite[Theorem 2.5]{Miy}}]
\label{propFMSM}
Let \(V\) be a \(C_2\)-cofinite VOA. Then:
\begin{enumerate}[(i)]
    \item \(A_n(V)\) is a finite-dimensional algebra.
    \item There are only finitely many isomorphism classes of simple admissible \(V\)-modules and any simple admissible \(V\)-module is an ordinary module. 
\end{enumerate}
\end{theorem}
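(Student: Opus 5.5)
The plan is to derive both parts from the \(C_2\)-cofiniteness machinery of Zhu algebras. Part (i) comes from a filtration argument on \(V/O_n(V)\). Part (ii) comes from Zhu's correspondence between simple admissible \(V\)-modules and simple \(A_0(V)\)-modules.

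For (i), note that \(A_n(V)\) is a quotient of \(V/O_n(V)\), so it suffices to show \(O_n(V) + F\) exhausts \(V\) for some finite-dimensional \(F\). First, \(O_n(V)\) contains \(O_{n+1}\)-type elements and, crucially, the elements \(u \circ_n v\). Expanding the residue shows
\begin{align*}
u \circ_n v = u_{(-2n-2)}v + \sum_{i \geq 1} \binom{\wt u + n}{i} u_{(-2n-2+i)} v .
\end{align*}
The leading term has strictly larger weight than the correction terms. Since \(V\) is \(C_2\)-cofinite, it is \(C_{k}\)-cofinite for all \(k \geq 2\); this is a standard fact, following from \cite[Section 3]{Li} or the spanning-set results there. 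Hence \(C_{2n+2}(V)\) has finite codimension. The key step is an induction on weight. Choose \(N\) so that \(V_{m} \subseteq C_{2n+2}(V)\) for \(m > N\), using homogeneity and finite codimension. Then each homogeneous \(w\) of weight \(> N\) is a sum of terms \(u_{(-2n-2)}v\). Each such term is congruent modulo \(O_n(V)\) to lower-weight elements. Induction therefore gives \(V = O_n(V) + \bigoplus_{m \le N} V_m\), which is finite-dimensional.

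For (ii), I would use the functors \(\Omega\) (lowest-weight space) and \(L\) (Zhu's induction), from \cite{Z, DLM2}. These set up a bijection between simple admissible \(V\)-modules and simple \(A_0(V)\)-modules. By (i), \(A_0(V)\) is finite-dimensional, so it has finitely many simple modules, and finiteness of simple admissible modules follows. For ordinariness, take a simple admissible \(M = \bigoplus_k M(k)\) with \(M(0)\neq 0\). Then \(M(0)\) is a simple \(A_0(V)\)-module, hence finite-dimensional. Since \([\omega]\) is central by the theorem of \cite{DLM2}, Schur's lemma makes \(o(\omega)=L_0\) act by a scalar \(r\) on \(M(0)\). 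Simplicity means \(M\) is spanned by modes applied to \(M(0)\). Using the commutator formula \([L_0, v_{(m)}]\), this shows \(L_0\) acts on \(M(k)\) by \(r+k\).

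The remaining point is \(\dim M(k)<\infty\). For this I would show that \(M(k)\) is a simple (or at least finitely generated) module over \(A_k(V)\), in the sense of \cite{DLM2}, and then invoke (i). Alternatively, I would use the \(C_2\)-cofinite spanning set: \(M\) is spanned by \(v^1_{(m_1)}\cdots v^l_{(m_l)}w\) with \(v^i\) in a finite complement of \(C_2(V)\), \(w\in M(0)\), and strictly decreasing modes. This gives finitely many such monomials in each degree.

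I expect the main obstacle to be the weight-induction in (i), specifically controlling the correction terms. The lower-order modes \(u_{(-2n-2+i)}v\) must indeed have lower weight. This holds because \(\wt(u_{(m)}v)=\wt u+\wt v-m-1\) decreases as \(m\) increases. With this, and with the reduction \(C_{2n+2}\)-cofiniteness \(\Leftarrow\) \(C_2\)-cofiniteness, the induction closes.
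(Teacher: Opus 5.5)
The paper does not prove this statement. It quotes it from \cite[Theorem 2.5]{Miy} and uses it as a black box, so there is no internal argument to compare against.

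Your reconstruction is correct and is essentially the standard proof. In part (i), the expansion \(u\circ_n v=u_{(-2n-2)}v+\sum_{i\geq 1}\binom{\wt u+n}{i}u_{(-2n-2+i)}v\) is right. Since \(C_{2n+2}(V)\) is graded and of finite codimension, it contains \(V_m\) for all large \(m\). The weight induction then terminates because \(V\) is \(\mathbb{N}\)-graded. Part (ii) is also sound: Zhu's correspondence gives finiteness, Schur's lemma for the central \([\omega]\) makes \(L_0\) a scalar on \(M(0)\), and the commutator \([L_0,v_{(m)}]\) propagates this to each \(M(k)\).

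What your write-up buys is a transparent reduction to three standard inputs:
\begin{enumerate}[(a)]
\item \(C_2\Rightarrow C_{2n+2}\)-cofiniteness;
\item Zhu's bijection for \(A_0(V)\);
\item \cref{propOmega}.
\end{enumerate}
In particular, the step you phrase as ``I would show that \(M(k)\) is a simple \(A_k(V)\)-module'' is exactly \cref{propOmega} with \(n=k\). That result is already stated in the paper, so it costs nothing. Keep in mind, though, that the one substantial ingredient in (i), \(C_2\Rightarrow C_{2n+2}\), is normally proved through an ordered PBW-type spanning set for \(C_2\)-cofinite VOAs. That is the same machinery that drives the original proofs, so you have isolated that input rather than avoided it.

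If you instead use the spanning-set alternative for \(\dim M(k)<\infty\), you need the precise form. The degree shifts must be positive and pairwise distinct (e.g.\ \(n_1>\dots>n_l>0\), with \(w\) running over a basis of \(M(0)\)). ``Strictly decreasing modes \(v_{(m_i)}\)'' alone does not bound the number of monomials of a fixed degree. A strongly raising factor followed by a strongly lowering one realizes any fixed total degree in infinitely many ways.
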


\begin{example}
     Consider again for \(B\) a commutative finite-dimensional algebra the VOA \(V^B\) from \cref{exComm}. It is easy to see that the \(n\)'th Zhu algebra, for every \(n \in \mathbb{N}\), coincides with the underlying algebra \(B\). In fact, one has \(L_{-1} + L_0 = 0\) and 
\begin{align*}
  a \circ_n b &= \Res_z \frac{(1 + z)^{\wt a + n}}{z^{2n+2}}Y(a, z)b = \Res_z \sum_{i = 0}^{\infty}\binom{n}{i}z^{i - 2n - 2}ab = \binom{n}{1 + 2n}ab = 0,  
\end{align*}
showing that \(O_n(V) = 0\). For the product \(*_n\), we compute
\begin{align*}
a *_n b &= \sum_{m = 0}^{n}(-1)^m\binom{m + n}{n}\Res_z Y(a, z)b\frac{(1+z)^{\wt a + n}}{z^{n + m + 1}} \\ &= \sum_{m = 0}^{n}(-1)^m\binom{m + n}{n}\Res_z \sum_{i = 0}^{\infty}\binom{n}{i}z^{i - n - m - 1}ab = \sum_{m = 0}^{n}(-1)^m\binom{m + n}{n}\binom{n}{m+n}ab \\ &= \binom{n}{n}^2ab = ab,
\end{align*}
and therefore, \((A_n(V^B), *_n) = (B, \cdot)\) as claimed.
\end{example}

One can also associate to \(V\) a Lie algebra \(\mathfrak{g}(V)\), obtained as a quotient of the space \(\C[t, t^{-1}] \otimes_{\C} V\) (see \cite[Section 3]{DLM2}). We denote by \(v(m)\) the equivalence class corresponding to \(t^m \otimes v\). Setting \(\wt (t^m \otimes v) = \wt v - m - 1\), there is an induced \(\mathbb{Z}\)-grading \(\mathfrak{g}(V) = \oplus_{m \in \mathbb{Z}}\mathfrak{g}(V)_m\). By \cite[Lemma 5.1]{DLM1}, every weak \(V\)-module \(M\) carries a natural action of \(\mathfrak{g}(V)\). One defines the vector space
\begin{align*}
    \Omega_n(M) = \{w \in M | \mathfrak{g}(V)_{-k} \triangleright w = 0 \; \mathrm{if}\; k > n\}.
\end{align*}
As shown in \cite[Theorem 3.2]{DLM2}, there is an action of algebras \(A_n(V) \rightarrow \End_{\mathbb{C}}\Omega_n(M), [a] \mapsto a(\mathrm{wt} a - 1) \triangleright \id_{\Omega_n(M)}\) and \(\Omega_n\) gives a functor from the category of weak \(V\)-modules to the category of \(A_n(V)\)-modules. One also has:
\begin{proposition}[{\cite[Proposition 3.4]{DLM2}}]
    \label{propOmega}
    Let \(M\) be an admissible \(V\)-module. Then \(\bigoplus_{i = 0}^n M(i) \subseteq \Omega_n(M)\), where equality holds if \(M\) is simple (assuming \(M(0) \neq 0\)). Moreover, in the latter case, for every \(0 \leq i \leq n\), \(M(i)\) is a simple \(A_n(V)\)-module.
\end{proposition}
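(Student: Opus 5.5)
The inclusion \(\bigoplus_{i=0}^n M(i) \subseteq \Omega_n(M)\) follows from a degree count. Let \(w \in M(i)\) with \(i \leq n\), and take a homogeneous element \(v(m) \in \mathfrak{g}(V)_{-k}\) with \(k > n\), so that \(\wt v - m - 1 = -k\). By the grading axiom of admissible modules, \(v_{(m)} w \in M(i - k)\). Since \(i - k < 0\) and \(M\) is \(\mathbb{N}\)-graded, this space is zero. As \(\mathfrak{g}(V)_{-k}\) is spanned by such homogeneous elements, all of \(\mathfrak{g}(V)_{-k}\) kills \(w\), and so \(w \in \Omega_n(M)\).

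For equality in the simple case, we use that \(\Omega_n(M)\) is graded. Let \(w \in \Omega_n(M)\) and decompose \(w = \sum_j w_j\) with \(w_j \in M(j)\). The grading operator commutes with the homogeneous pieces of \(\mathfrak{g}(V)\) up to shifts, so each \(w_j\) also lies in \(\Omega_n(M)\). Fix \(j > n\) with \(w_j \neq 0\). Since \(M\) is simple, the submodule generated by \(w_j\) is all of \(M\). In an admissible module this submodule is spanned by PBW-type monomials in the modes \(v(m)\) applied to \(w_j\). Reorder these monomials so that operators of positive degree stand to the left, degree-zero operators in the middle, and negative-degree operators to the right. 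The reordering uses the Lie algebra structure on \(\mathfrak{g}(V)\), i.e.\ the standard triangular decomposition \(U(\mathfrak{g}) = U(\mathfrak{g}_+)U(\mathfrak{g}_0)U(\mathfrak{g}_-)\). Any negative-degree operator of degree \(-k\) with \(k > n\) kills \(w_j\). An operator of degree \(-k\) with \(k \leq n\), even after composing with others, lowers the degree by at most \(j\)... this needs care. It is cleaner to argue directly: the operators \(\mathfrak{g}(V)_{-k}\) with \(k \leq n\) map \(w_j\) to degrees \(\geq j - n > 0\). Iterating, \(U(\mathfrak{g}_-)\) applied to \(w_j\) only lands in degrees \(\geq 1\), because reaching degree \(0\) from degree \(j\) requires total degree \(-j\). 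Such a product has to be rewritten, and the rewriting uses single operators of degree \(< -n\), which vanish on \(w_j\). This rewriting step is the main obstacle. The cleaner route is the standard one from \cite{DLM2}: \(U(\mathfrak{g})_{-j}\) is spanned by elements of the form \(U(\mathfrak{g}) \cdot \mathfrak{g}_{-k}\) with \(k \geq j\), using that \(\mathfrak{g}(V)\) is generated in a suitable sense by modes and that products of modes can be re-expressed via the Jacobi identity and associativity (\(Y(u,z)Y(v,z)\) expansions). Granting this, \(U(\mathfrak{g})_{-j} w_j = 0\), so \(M(0) \cap \langle w_j\rangle = 0\). This contradicts \(\langle w_j \rangle = M\) and \(M(0) \neq 0\). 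Hence \(\Omega_n(M) = \bigoplus_{i \leq n} M(i)\).

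For simplicity of each \(M(i)\) as an \(A_n(V)\)-module, take \(0 \neq w \in M(i)\). Simplicity of \(M\) gives \(M(i) = (U(\mathfrak{g}) w)_i = U(\mathfrak{g})_0\, w\). The key point is that the degree-zero part \(U(\mathfrak{g})_0\) acts on the top \(n+1\) levels through operators that factor as products of a degree-\(\leq 0\) piece and a degree-\(\geq 0\) piece. Moving back and forth within the levels \(0,\dots,n\) therefore realizes the action of the zero-degree algebra. By \cite[Theorem 3.2]{DLM2}, this algebra acts on \(\Omega_n(M)\) through \(A_n(V)\), and the analysis there shows that the image of \(A_n(V)\) on \(M(i)\) coincides with that of \(U(\mathfrak{g})_0\). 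Thus \(A_n(V)\, w = M(i)\), so \(M(i)\) is simple. The identification of the image of \(A_n(V)\) with \(U(\mathfrak{g})_0\) restricted to \(M(i)\) is the second delicate step. I would handle it by invoking the construction \(\bar M_n\) of \cite{DLM2}, which shows that \(\Omega_n/\Omega_{n-1}\)-type quotients are governed by \(A_n(V)\).
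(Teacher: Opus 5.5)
\textbf{There is a genuine gap.} Your proof of the inclusion \(\bigoplus_{i\le n}M(i)\subseteq\Omega_n(M)\) is fine. So is your reduction to homogeneous \(w_j\in\Omega_n(M)\cap M(j)\). But both the equality and the simplicity depend on a step you only grant, and in the form you state it, that step is false. Inside \(U(\mathfrak{g}(V))\), the degree \(-j\) part is not spanned by \(U(\mathfrak{g})\cdot\mathfrak{g}(V)_{-k}\), \(k\ge j\): by PBW, a product of \(j\) elements of \(\mathfrak{g}(V)_{-1}\) is in general not in the left ideal generated by \(\bigoplus_{k\ge j}\mathfrak{g}(V)_{-k}\).

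The Lie bracket alone gives you nothing here. Since \(\mathfrak{g}(V)_{<-n}\) is an ideal in \(\mathfrak{g}(V)_{\le 0}\), the space \(\Omega_n(M)\) is stable under \(\mathfrak{g}(V)_{-k}\) for \(1\le k\le n\). So nothing in the Lie structure prevents a chain of such steps from carrying \(w_j\) down to \(M(0)\); this is exactly why your iteration claim fails, as you suspected. The simplicity part has the same hole. \cite[Theorem 3.2]{DLM2} only says that \([a]\mapsto o(a)\) defines an \(A_n(V)\)-action on \(\Omega_n(M)\). It does not say that a degree-zero product such as \(a(p)b(q)\) with \(\deg b(q)=-\deg a(p)\) acts on \(w\) through zero modes, and the appeal to \(\overline{M}_n(X)\) does not supply this.

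\textbf{The missing idea.} What you need is a vertex-algebraic fact, not a Lie-theoretic one, and it is what the standard argument in \cite{DLM2} uses: for any \(w\) in a weak \(V\)-module \(M\), the span \(\operatorname{span}\{u_{(m)}w\mid u\in V,\ m\in\mathbb{Z}\}\) is a submodule. It follows from the Jacobi identity. Choose \(m\ge 0\) with \(v_{(m+i)}w=0\) for all \(i\ge 0\). Then the Borcherds identity becomes
\[
\sum_{i\ge0}(-1)^i\binom{r}{i}v_{(m+r-i)}u_{(p+i)}w=\sum_{i\ge0}\binom{m}{i}(v_{(r+i)}u)_{(m+p-i)}w \qquad (r,p\in\mathbb{Z}).
\]
Descending induction on \(p\), with \(r=q-m\), shows that \(v_{(q)}u_{(p)}w\) lies in the span. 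In particular, \(U(\mathfrak{g})_{-l}\,w\subseteq\mathfrak{g}(V)_{-l}\,w\) for homogeneous \(w\) in an admissible module. This is the correct, module-level version of what you granted.

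\textbf{How it finishes the proof.} With this lemma both claims take one line each.
\begin{itemize}
\item If \(0\ne w\in\Omega_n(M)\cap M(j)\) with \(j>n\), then \(M=\operatorname{span}\{u_{(m)}w\}\) by simplicity, since this span is a graded submodule. Hence \(M(0)=\operatorname{span}\{u_{(m)}w\mid \wt u-m-1=-j\}=0\) because \(-j<-n\), a contradiction.
\item If \(0\ne w\in M(i)\) with \(i\le n\), taking the degree-\(i\) part gives \(M(i)=\operatorname{span}\{o(u)w\mid u\in V\}=A_n(V)\cdot w\). Here your first part ensures that \(A_n(V)\) acts on \(M(i)\) by zero modes. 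Hence every nonzero \(M(i)\) with \(i\le n\) is a simple \(A_n(V)\)-module.
\end{itemize}
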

Conversely, one can construct from an \(A_{n}(V)\)-module \(X\) two admissible \(V\)-modules \(\overline{M}_n(X)\) and \(L_n(X)\), where the latter is a quotient of the former. First, let \(\mathfrak{g}(V)_{-p} \triangleright X = 0\) for \(p > n\) and set \(P_n = \oplus_{p > n}\mathfrak{g}(V)_{-p} \oplus \mathfrak{g}(V)_{0}\). Then, by means of the universal enveloping algebra \(U(\mathfrak{g}(V))\), set
\begin{align*}
    M_n(X) = U(\mathfrak{g}(V)) \otimes_{U(P_n)}X.
\end{align*}
Assigning \(\mathrm{wt}(X) = n\) and \(M_n(X)(m) = U(\mathfrak{g}(V))_{m-n}X\) defines a \(\mathbb{Z}\)-grading on \(M_n(X)\). Note that for \(m < 0\), \(M_n(X)(m) = 0\). The operator defined by \(Y_{M_n(X)}(v, z) = \sum_{m \in \mathbb{Z}}v(m)z^{m-1}\) does not quite induce the structure of a \(V\)-module, as weak associativity is not satisfied. Dividing out the necessary relations, one obtains an admissible \(V\)-module \(\overline{M}_n(X)\). For details, we refer to \cite[Section 4]{DLM2}.

\begin{theorem}[{\cite[Theorem 4.1]{DLM2}}]
\label{thmMUniv}
\(\overline{M}_n(X)\) is an admissible \(V\)-module with grading \(\overline{M}_n(X) = \bigoplus_{m = 0}^{\infty}\overline{M}_n(X)(m)\) and \(\overline{M}_n(X)(n) = X\) with the following universal property: for any weak \(V\)-module \(M\), a morphism of \(A_n(V)\)-modules \(\phi \colon X \rightarrow \Omega_n(M)\) can be uniquely extended to a morphism \(\overline{\phi} \colon \overline{M}_n(X) \rightarrow M\) of weak \(V\)-modules.
\end{theorem}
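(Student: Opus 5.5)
The proof of \cref{thmMUniv} will construct \(\overline{M}_n(X)\) as a quotient of the induced module \(M_n(X)\) by the smallest subspace that forces the Jacobi identity, and will then deduce the universal property from the universal property of induction. I would proceed as follows.

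\textbf{Step 1: the induced module and its grading.} Let \(X\) be an \(A_n(V)\)-module, made into a \(P_n\)-module as follows:
\begin{itemize}
\item \(\mathfrak{g}(V)_{-p}\) acts by zero for \(p>n\);
\item \(\mathfrak{g}(V)_0\) acts through \(v(\wt v-1)\mapsto o(v)\), that is, through \([v]\in A_n(V)\).
\end{itemize}
Checking that this is a Lie algebra action requires that \(\mathfrak{g}(V)_0\to A_n(V)_{\mathrm{Lie}}\) is a Lie homomorphism. I would verify this by showing \(u*_nv-v*_nu\equiv \sum_{j\ge0}\binom{\wt u-1}{j}u_{(j)}v\) modulo \(O_n(V)\), which is a standard residue computation. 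PBW then gives \(M_n(X)=U(\mathfrak g(V))\otimes_{U(P_n)}X\) with the grading \(M_n(X)(m)=U(\mathfrak g(V))_{m-n}X\). Since \(U(\mathfrak g(V))=U(\mathfrak g(V)_{>0}\oplus\ldots)\) factors through the negative part, \(M_n(X)(m)=0\) for \(m<0\) and \(M_n(X)(n)\supseteq X\).

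\textbf{Step 2: forcing associativity.} Let \(W\subseteq M_n(X)\) be the span of the coefficients of
\[
(z_0+z_2)^{\wt a+n}Y(a,z_0+z_2)Y(b,z_2)w-(z_2+z_0)^{\wt a+n}Y(Y(a,z_0)b,z_2)w,
\]
taken over \(a,b\in V\) and \(w\in X\). Set \(\overline{M}_n(X)=M_n(X)/U(\mathfrak g(V))W\). The point is that associativity on the generating space \(X\), with the truncation exponent uniform in \(w\in X\), propagates to the whole module by a Li/Dong–Lepowsky type argument: a module for \(\mathfrak g(V)\) generated by a subspace on which weak associativity holds with a uniform bound is a weak \(V\)-module. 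Admissibility then follows from the grading inherited from \(M_n(X)\).

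\textbf{Step 3: \(\overline{M}_n(X)(n)=X\).} This is the main obstacle: one must show that \(U(\mathfrak g(V))W\cap X=0\). I would do this by constructing a linear functional pairing, in the spirit of Zhu's original argument. For \(x'\in X^*\), define a functional on \(M_n(X)\) by projecting to degree \(n\) and reducing products \(a(m)b(k)\cdots w\) into \(A_n(V)\)-actions via the relations defining \(O_n\) and \(*_n\). One then shows that this functional kills \(U(\mathfrak g(V))W\); this is exactly where the definitions of \(\circ_n\) and \(*_n\) enter, through residue identities. The difficult bookkeeping here is the identity expressing \(o(a*_nb)\) on degree \(n\) through the modes \(a(\wt a-1+i)b(\wt b-1-i)\) for \(|i|\le n\), which I would prove by expanding the associator above.

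\textbf{Step 4: the universal property.} Let \(\phi\colon X\to\Omega_n(M)\) be a morphism of \(A_n(V)\)-modules. Then \(\phi\) is a \(P_n\)-module map, by the definition of \(\Omega_n\) and the \(A_n\)-action. It therefore extends uniquely to a \(\mathfrak g(V)\)-map \(M_n(X)\to M\). Since weak associativity holds in \(M\), this map kills \(W\) and hence \(U(\mathfrak g(V))W\), so it descends to \(\overline{\phi}\). Uniqueness follows because \(X\) generates \(\overline{M}_n(X)\).
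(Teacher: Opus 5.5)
The paper does not prove this statement; it quotes it from \cite{DLM2} and only sketches the construction. Your Steps 1, 2 and 4 follow that construction. Step 4 is correct: for $\phi(x)\in\Omega_n(M)$ one has $a(m)\phi(x)=0$ for $m\ge \wt a+n$, which is exactly the exponent used in $W$. Step 4 also does not depend on Step 3.

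There are, however, two genuine problems. First, the claim in Step 1 that $M_n(X)(m)=0$ for $m<0$ is false once $n\ge 1$. The pieces $\mathfrak g(V)_{-1},\dots,\mathfrak g(V)_{-n}$ are not in $P_n$, so by PBW they act freely on $1\otimes X$. For instance, take $0\neq y\in\mathfrak g(V)_{-1}$ (say $y=\omega(2)$, acting as $L_1$) and $0\ne x\in X$. Then $y^{n+1}\otimes x$ is a nonzero vector of degree $-1$. (The paper's expository remark asserting this vanishing has the same defect.) So the $\mathbb N$-grading of $\overline M_n(X)$ is not inherited from $M_n(X)$; it must be proved after the quotient. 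Once $\overline M_n(X)$ is a weak $V$-module generated by $X$, associativity shows it is spanned by the vectors $a(m)x$ with $a\in V$, $x\in X$. Moreover, $a(m)x=0$ whenever $\wt a-m-1<-n$, which gives the vanishing of negative degrees. The same observation shows that $\overline M_n(X)(n)$ is spanned by $o(a)x=[a]\cdot x$, i.e.\ it is the image of $X$. You also need that $U(\mathfrak g(V))W$ is graded, which holds since $W$ is spanned by homogeneous vectors.

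Second, Step 3, which you rightly call the crux, has no working mechanism. For $n\ge1$, the space $M_n(X)(n)=U(\mathfrak g(V))_0\otimes_{U(P_n)}X$ is much larger than $X$. It contains free vectors such as $a(m)b(k)\otimes x$ with $\deg b(k)=-p$, $1\le p\le n$.
\begin{itemize}
\item ``Projecting to degree $n$'' therefore does not give a map to $X$.
\item ``Reducing products into $A_n(V)$-actions'' must pass through the intermediate vectors $b(k)\otimes x$ in degree $n-p$, where there is no $A_n(V)$-module structure to reduce into.
\item Identities obtained by expanding the associator hold only modulo $W$. Using them to define the functional merely relocates the entire difficulty into showing the definition is consistent, and that consistency is the content of the theorem.
\end{itemize}

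A cleaner formulation of what is missing: by your own Step 4, $X\to\overline M_n(X)$ is injective if and only if some weak $V$-module $M$ admits an injective $A_n(V)$-map $X\to\Omega_n(M)$. So you need an independent realization of $X$ inside some $\Omega_n(M)$. One option is the module $\bigoplus_{m\ge0}A_{m,n}(V)\otimes_{A_n(V)}X$ built from the Dong--Jiang bimodules, whose degree-$n$ piece is $A_n(V)\otimes_{A_n(V)}X=X$. The other is a fully worked-out Zhu-type pairing that specifies the matrix coefficients of words moving between degree $n$ and degrees $0,\dots,n-1$. As written, Step 3 is a placeholder rather than an argument.
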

We remark that \(\overline{M}_n(X)(0) \neq 0\) if and only if the \(A_n(V)\)-action on the module \(X\) does not factor through \(A_{n-1}(V)\).

The module \(L_n(X)\) is a certain quotient of the universal module \(\overline{M}_n(X)\). We do not recall the construction here, but state its most important properties.

\begin{theorem}[{\cite[Theorem 4.2]{DLM2}}]
    \label{thmLn}
     Let \(X\) be an \(A_n(V)\)-module. Then \(L_n(X)\) is an admissible \(V\)-module that satisfies \(L_n(X)(n) \cong X\) as \(A_n(V)\)-modules. Moreover, if \(X\) is simple, then also \(L_n(X)\) is. If the \(A_n(V)\)-action does not factor through \(A_{n-1}(V)\), then \(L_n(X)(0) \neq 0\). 
\end{theorem}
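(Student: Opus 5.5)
The plan follows Dong--Li--Mason and uses the universal module of \cref{thmMUniv}. Given an \(A_n(V)\)-module \(X\), we first form \(\overline{M}_n(X)\), which is admissible with \(\overline{M}_n(X)(n) = X\). We then let \(J \subseteq \overline{M}_n(X)\) be the maximal graded submodule with trivial intersection with the degree-\(n\) piece \(X\). Equivalently, \(J\) is the sum of all graded submodules meeting \(\overline{M}_n(X)(n)\) trivially. A sum of graded submodules each meeting a fixed graded component trivially still meets it trivially, because the degree-\(n\) part of the sum is the sum of the degree-\(n\) parts. So \(J\) is well defined, and we set \(L_n(X) = \overline{M}_n(X)/J\). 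It is admissible as a quotient of an admissible module by a graded submodule. Since \(J(n) = 0\), the projection restricts to an isomorphism \(X = \overline{M}_n(X)(n) \to L_n(X)(n)\). This map commutes with the zero-modes \(o(a)\), which implement the \(A_n(V)\)-action, so \(L_n(X)(n) \cong X\) as \(A_n(V)\)-modules.

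For simplicity, suppose \(X\) is simple and let \(0 \neq N \subseteq L_n(X)\) be a submodule. Any submodule of an admissible module generated by a homogeneous vector is graded, and \(N\) contains nonzero homogeneous vectors, so we may assume \(N\) is graded. By maximality of \(J\), the preimage of \(N\) must meet \(X\) nontrivially, so \(N(n) \neq 0\). Now \(N(n)\) is an \(A_n(V)\)-submodule of \(X\), hence \(N(n) = X\). Since \(\overline{M}_n(X)\) is generated by \(X\) (by construction from \(U(\mathfrak{g}(V)) \otimes X\)), so is \(L_n(X)\), and therefore \(N = L_n(X)\). The step to treat carefully is ensuring we only need graded submodules. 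I would handle this by taking a nonzero \(w \in N\) and noting that its homogeneous components lie in the submodule generated by \(w\). This uses that \(L_0\) or the admissible grading operator can be expressed through modes, which holds for the grading constructed in \cite{DLM2}; if necessary I would restrict the whole argument to graded submodules, which suffices for the admissible category.

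For the last claim, suppose \(L_n(X)(0) = 0\). Then \(L_n(X) = \bigoplus_{m\ge1} L_n(X)(m)\), and after shifting the grading by one we get an admissible module. In it, \(X\) sits in degree \(n-1\), so \(X \subseteq \Omega_{n-1}(L_n(X))\) by \cref{propOmega}. The \(A_{n-1}(V)\)-action on \(\Omega_{n-1}\) is given by the same zero-mode formula \(a(\wt a - 1)\). Hence the \(A_n(V)\)-action on \(X\) factors through \(\pi_n\), contradicting the hypothesis. The one subtlety is checking that the \(A_n\)-action on a subspace of \(\Omega_{n-1}\) agrees with the pullback of the \(A_{n-1}\)-action, which holds because both are induced by the identity on \(V\). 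I expect the main obstacle to be the graded-submodule issue in the simplicity argument, together with confirming that \(\overline{M}_n(X)\) is generated by its degree-\(n\) part after the associativity quotient.
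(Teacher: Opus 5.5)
Your proposal is correct and follows the standard Dong--Li--Mason construction. The paper itself gives no proof of this statement; it only cites \cite{DLM2}. Your three steps match that argument:
\begin{enumerate}[(i)]
\item $L_n(X)$ is the quotient of $\overline{M}_n(X)$ by the largest graded submodule meeting degree $n$ trivially. This is the same as the radical of the pairing with $X^*$, extended by zero away from degree $n$.
\item Simplicity follows from generation by the degree-$n$ piece.
\item $L_n(X)(0)\neq 0$ follows by shifting the grading and using that zero-modes of $O_{n-1}(V)$ vanish on $\Omega_{n-1}$.
\end{enumerate}

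The one step whose stated justification does not hold as written is the reduction to graded submodules. It is not true that the admissible grading, in particular that of $\overline{M}_n(X)$, is in general expressible through modes. For example, in $V\oplus V$ with grading $M(m)=V_m\oplus V_{m-1}$, the diagonal submodule contains no nonzero homogeneous vector.

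What makes the step work is the simplicity of $X$:
\begin{itemize}
\item The central element $[\omega]$ acts on the finite-dimensional simple module $X$ by a scalar $\lambda$, by Schur's lemma. It acts there as $o(\omega)=L_0$.
\item $L_n(X)$ is generated by $L_n(X)(n)$, and $[L_0,v_{(m)}]=(\wt v-m-1)v_{(m)}$. Hence $L_0$ acts on $L_n(X)(m)$ as the scalar $\lambda+m-n$.
\item These scalars are distinct for distinct $m$. So every weak submodule is $L_0$-stable, hence graded.
\end{itemize}

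With this fix you obtain simplicity as a weak module, not merely the absence of graded submodules.
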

Later, we will need some information on the grading of \(A_n(V)\) by generalized eigenvalues.
\begin{proposition}
    \label{propGrad}
    For any \(n \in \mathbb{N}\), there is a subset \(\Lambda(n) \subset \C\) of generalized eigenvalues with respect to multiplication by the central element \([\omega] \in A_n(V)\) such that \(A_n(V) = \bigoplus_{r \in \Lambda(n)}A_n(V)_{(r)}\),
    where \(A_n(V)_{(r)}\) is the generalized eigenspace with generalized eigenvalue \(r\). Moreover, the set of conformal weights \(\Lambda\) is contained in \(\Lambda(n)\), and \(\Lambda(n) \subseteq \{r + i | r \in \Lambda, 0 \leq i \leq n\}\).
\end{proposition}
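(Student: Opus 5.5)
Since \([\omega]\) is central in \(A_n(V)\) and \(A_n(V)\) is finite-dimensional, the plan is to decompose \(A_n(V)\) by the generalized eigenvalues of \([\omega]\), and then to pin down those eigenvalues through simple \(A_n(V)\)-modules and the simple \(V\)-modules they generate. We assume \(V\) is \(C_2\)-cofinite, as throughout, so that \cref{propFMSM} applies.

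\emph{Step 1: the decomposition.} By \cref{propFMSM}(i), \(A_n(V)\) is finite-dimensional. Left multiplication \(\ell_\omega\colon x\mapsto [\omega]*_n x\) is therefore a linear endomorphism of a finite-dimensional space, so its generalized eigenspaces give \(A_n(V)=\bigoplus_{r\in\Lambda(n)}A_n(V)_{(r)}\), where \(\Lambda(n)\) is its finite set of eigenvalues. Because \([\omega]\) is central, the spectral projections are polynomials in \(\ell_\omega\) and hence multiplication by central idempotents \(e_r\). Each summand is then a two-sided ideal, and the decomposition is one of algebras.

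\emph{Step 2: eigenvalues equal central characters of simples.} Each \(r\in\Lambda(n)\) has \(e_r\neq 0\). Take a simple quotient \(X\) of the left module \(A_n(V)e_r\). Since \([\omega]\) is central and acts nilpotently-shifted on \(A_n(V)e_r\), Schur's lemma gives \([\omega]|_X=r\). Conversely, any simple \(A_n(V)\)-module on which \([\omega]\) acts by a scalar \(s\) has \(s\in\Lambda(n)\). So \(\Lambda(n)\) is exactly the set of scalars by which \([\omega]\) acts on simple \(A_n(V)\)-modules.

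\emph{Step 3: \(\Lambda(n)\subseteq\{r+i\mid r\in\Lambda,\ 0\le i\le n\}\).} Let \(X\) be simple with \([\omega]\) acting by \(s\). Choose \(m\le n\) minimal such that the action factors through \(A_m(V)\) via \(\pi_{m+1}\circ\cdots\circ\pi_n\). Applying \cref{thmLn} to \(A_m(V)\), the module \(M=L_m(X)\) is simple with \(M(0)\neq0\) and \(M(m)\cong X\). By \cref{propFMSM}(ii), \(M\) is ordinary, so \(M=\bigoplus_k M_{r+k}\) with \(r\in\Lambda\). The admissible grading of a simple module with \(M(0)\neq 0\) coincides with the \(L_0\)-grading shifted by \(r\), so \(M(m)=M_{r+m}\). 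Under the action \([a]\mapsto o(a)\), the class \([\omega]\) acts as \(o(\omega)=\omega_{(1)}=L_0\). Hence \(s=r+m\) with \(0\le m\le n\).

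\emph{Step 4: \(\Lambda\subseteq\Lambda(n)\).} Given \(r\in\Lambda\), take a simple ordinary \(M\) with lowest weight \(r\). By \cref{propOmega}, \(M(0)=M_r\) is a simple \(A_n(V)\)-module on which \([\omega]\) acts by \(L_0=r\). By Step 2, \(r\in\Lambda(n)\).

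\emph{Main obstacle.} The delicate points are in Step 3. First, one must check that the pulled-back \(A_m(V)\)-module still has \([\omega]\) acting by \(s\); this holds because the \(\pi\)'s are induced by the identity on \(V\). Second, one must identify the admissible degree-\(m\) piece with the \(L_0\)-eigenspace \(M_{r+m}\). For this I would use that a simple admissible module is generated from \(M(0)\) and that \(o(\omega)\) acts on \(M(k)\) as \(r+k\), by the commutator identity \([L_0,v_{(m)}]=(\wt v-m-1)v_{(m)}\).
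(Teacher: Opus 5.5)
Your proposal is correct and follows essentially the same route as the paper. You decompose by the central element \([\omega]\), realize each simple \(A_n(V)\)-module as the piece \(L_m(X)(m)\) of a simple module with nonzero bottom component (taking \(m\) as the minimal level through which the action factors), and read off the eigenvalue \(r+m\) from \(o(\omega)=L_0\). The differences are only cosmetic: you prove the decomposition and the identification of \(\Lambda(n)\) with central characters of simples directly rather than citing Arike--Nagatomo, and you get \(\Lambda\subseteq\Lambda(n)\) by applying \cref{propOmega} at level \(n\) rather than via \(\Lambda(0)\subseteq\Lambda(n)\) from surjectivity of \(\pi_n\).
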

\begin{proof}
    Any finite-dimensional algebra admits a generalized eigenspace decomposition as above, where the generalized eigenvalues come from the action of the central element on simple \(V\)-modules, see for instance \cite[Proposition 2.4.1]{AN2}. Hence, what we need to prove is the second claim. Note first that surjectivity of the morphism of algebras \(\pi_n \colon A_n(V) \rightarrow A_{n-1}(V)\) implies that for \(n \geq k\), \(\Lambda(k) \subseteq \Lambda(n)\). Let \(M\) be a simple \(V\)-module with conformal weight \(r\). Then, by \cref{propOmega}, \(M_r = M(0) = \Omega_0(M)\) is a simple \(A_0(V)\)-module on which \([\omega]\) acts by multiplication with \(r\), and hence \(\Lambda \subset \Lambda(n)\). For second inclusion stated in the proposition, we prove that every simple \(A_n(V)\)-module \(W\) arises as a graded component \(M(i)\) of some a simple ordinary \(V\)-module \(M\) with \(M(0) \neq 0\), where \(0 \leq i \leq n\). If \(W\) is not an \(A_{n-1}(V)\)-module, then by \cref{thmLn}, \(L_n(W)\) is a simple \(V\)-module so that \(W \cong L_n(W)(n)\) and \(L_n(W)(0) \neq 0\). If \(M\) is an \(A_{n-1}(V)\)-module, and not an \(A_{n-2}(V)\)-module, then \(L_{n-1}(W)(n-1) \cong W\). Iterating and finally using that if \(W\) is an \(A_0(V)\)-module (without further condition), then \(L_0(W)(0) \cong W\), we obtain the claimed result. Using \([\omega] \triangleright M(i) = (r + i)M(i)\) for \(M(0) \neq 0\) yields the second claimed inclusion.
\end{proof}

\section{Pseudo-Traces and a generating set for \(\mathcal{C}_1(V)\)}
\label{secPseudo}
\subsection{Pseudo-Trace maps}
\label{subsecPseudo}
In this section, we review the concept of pseudo-trace maps. For a finite-dimensional algebra \(A\) and a projective right\footnote{Of course, pseudo-traces can be defined equally well for left \(A\)-modules.} \(A\)-module \(W_A\), these are functionals on \(\End_A(W)\) defined from the data of symmetric functions on \(A\). Similarly to traces, they satisfy a cyclicity property.

Recall the following basic fact: A finitely generated right \(A\)-module \(W\) is projective if and only if
    \begin{align*}
    T\colon W \otimes_A \Hom_A(W, A) \rightarrow \End_A(W), \,\,\, w \otimes \varphi \mapsto w \triangleleft \varphi(-)
    \end{align*}
is an epimorphism, which is the case if and only if \(T\) is an isomorphism (see \cite[Theorem 1.7]{Br}).
This, in turn, is equivalent to the existence of subsets \(\{u_i\}_{i = 1}^m \subset W\) and \(\{\alpha_i\}_{i = 1}^m \subset \Hom_A(W, A)\) satisfying
\begin{align}
\label{eqPCS}
     w = \sum_{i = 1}^mu_i \triangleleft\alpha_i(w)
\end{align}
for all \(w \in W\).
To see the equivalence, note that \cref{eqPCS} implies that \(\sum_{i = 1}^m f(u_i) \otimes \alpha_i\) is a preimage of \(f \in \End_A(W)\) under \(T\). 
\begin{definition}
    A collection \(\{u_i, \alpha_i\}_{i = 1}^m\) satisfying \cref{eqPCS} is called an \(A\)-\textit{coordinate system} of \(W\).
\end{definition}
Let us further introduce the \(\C\)-vector space of \textit{symmetric linear functions on} \(A\), 
\begin{align*}
    \mathcal{S}(A) = \{\phi \in \Hom_{\mathbb{C}}(A, \mathbb{C}) | \phi(ab) = \phi(ba)\}.
\end{align*}
Note the isomorphism \(\mathcal{S}(A) \cong (A/[A,A])^*\), where \([A,A] = \spann_{\mathbb{C}}\{[a,b] |a, b \in A\}\) is the linear subspace spanned by commutators in \(A\).
\begin{definition}
A pair \((A, \phi)\) of an algebra \(A\) together with a map \(\phi \in \mathcal{S}(A)\) is called \textit{symmetric algebra} if \(\phi\) induces a non-degenerate pairing \(\langle -, -\rangle : A \times A \rightarrow \mathbb{C}\), that is \(\langle a,b \rangle = \phi(ab)\) for \(a, b \in A\).
\end{definition}
The notion of symmetric functions is important to define the concept of pseudo-trace maps.
\begin{definition}[{\cite[Section 2]{A}}]
    \label{defPseudo}
    For a finite-dimensional algebra \(A\), a finitely generated module \(W_A\), and a symmetric function \(\phi \in \mathcal{S}(A)\), the linear map \(\tr_W^{\phi} \in \End_A(W)^*\) defined by
\begin{align*}
    \tr_W^{\phi}(f) = \sum_{i = 1}^m\phi(\alpha_i(f(u_i)))
\end{align*}
for \(f \in \End_A(W)\) is called a \textit{pseudo-trace map}. Similarly, we define pseudo-traces for left \(A\)-modules.
\end{definition}
The definition is independent of the choice of \(A\)-coordinate system: Let \(S \colon W \otimes_A \Hom_A(W, A) \rightarrow A/[A, A], \varphi \otimes w \mapsto \varphi(w) + [A, A]\) and denote for \(\phi \in \mathcal{S}(A)\) by \(\Bar{\phi} \in (A/[A, A])^*\) the induced map on the quotient. A short computation shows that \(\tr_W^{\phi} = \Bar{\phi} \circ S \circ T^{-1}\), where the right hand side is defined independently of any \(A\)-coordinate system.

The most important property of pseudo-trace maps is that they satisfy a cyclicity property (see \cite[Proposition 4.2]{AN}): If \(W\) and \(X\) are finitely generated projective right \(A\)-modules,
\begin{align*}
    \tr_W^{\phi}(f \circ g) = \tr_X^{\phi}(g \circ f) \, \, \text{ for any } \, \, g \in \Hom_A(W, X) \text{ and } f \in \Hom_A(X, W).
\end{align*}
Let us look at an example of a symmetric algebra and a corresponding pseudo-trace map.
\begin{example}
    Consider the matrix algebra \(A = \left\{Y =\begin{pmatrix} y_1 & y_2 \\ 0 & y_1 \end{pmatrix} \, | \, y_1, y_2 \in \mathbb{C} \right\} \subset \mathrm{Mat}_2(\mathbb{C})\) with its canonical basis \(\Omega = \{e, f\}\) given by the identity \(e = \mathbf{1}_2\) and \(f = \begin{pmatrix} 0 & 1 \\ 0 & 0 \end{pmatrix}\). The map \(\phi \colon A \rightarrow \mathbb{C}, \phi(Y) = y_2\) is a symmetric function on \(A\) that induces a non-degenerate symmetric form given by
    \begin{align*}
        \langle -, - \rangle: A \times A \rightarrow \mathbb{C}, \, \, \, \langle Y, Z \rangle = \phi(YZ) = y_1z_2 + y_2z_1,
    \end{align*}
    turning the pair \((A, \phi)\) into a symmetric algebra. The vector space \(W = W_1 \oplus W_2\) with \(W_i = \C^m\) becomes a right \(A\)-module via matrix multiplication on vectors \((u,v) \in W_1 \oplus W_2\) from the right. The \(A\)-module \(W\) has a basis \(\{v_j^e \triangleleft e, v_j^e \triangleleft f\}_{j=1}^m\), where \(v_j^e = ((0, \hdots, 0, 1, 0, \hdots, 0), 0)\), with 1 in the \(j\)'th entry. Note that the vectors \(v_j^e\) and the \(A\)-linear maps \(\beta_j \colon W \rightarrow A, \beta_j((u,v)) = \begin{pmatrix} u_j & v_j \\ 0 & u_j \end{pmatrix}\) provide an \(A\)-coordinate system for \(W\), so \(W\) is indeed a projective right \(A\)-module. Finally, for \(g \in \End_AW\), we may write \(g(v_j^e) = \sum_{t = 1}^m\sum_{\rho \in \Omega}\alpha_{jt}^{\rho}v_t^{e} \triangleleft \rho\) and find for the pseudo-trace evaluated at \(g\)
    \begin{align*}
    \tr_W^{\phi}(g) = \phi\left(\sum_{j=1}^m\beta_j(g(v_j^e))\right) = \sum_{j,t=1}^m\sum_{\rho \in \Omega}\alpha_{jt}^{\rho}\phi(\beta_j(v_t^{e} \triangleleft \rho)) = \sum_{j,t=1}^m(\alpha_{jt}^e\phi(\delta_{jt}e) + \alpha_{jt}^f\phi(\delta_{jt}f)) = \sum_{j=1}^m\alpha_{jj}^f,
\end{align*}
that is, expressing \(g\) as the matrix \(\begin{pmatrix} \alpha^e & \alpha^f\\ 0 & \alpha^e \end{pmatrix}\), where \(\alpha^e, \alpha^f \in \mathrm{Mat}_m(\mathbb{C})\), we find \(\tr_W^{\phi}(g) = \tr(\alpha^f)\).
\end{example}
The following serves as an alternative viewpoint on pseudo-traces, presented in \cite[Section 4]{GR}. We will use this description later in \cref{secCat}. To accommodate to the situation there, we switch conventions and review the concept in terms of left \(A\)-modules. Let \(_AW\) be a finitely generated projective \(A\)-module. Let \(X\) be a finite-dimensional vector space with a surjective \(A\)-module map \(\pi\colon A \otimes_\mathbb{C} X \rightarrow W\). Due to projectivity, there is an injective \(A\)-morphism \(\iota\colon W \rightarrow A \otimes_\mathbb{C} X\) satisfying \(\pi \circ \iota = \id_W\). 

Choose a basis \(\{x_1, \hdots, x_m\}\) of \(X\) and the corresponding dual basis \(\{x_1^*, \hdots, x_m^*\}\) of \(X^*\). Consider the standard evaluation \(\widetilde{\mathrm{ev}}_X\colon X  \otimes_{\mathbb{C}} X^* \rightarrow \mathbb{C}, x \otimes \varphi \mapsto \varphi(x)\) and coevaluation \(\mathrm{coev}_X\colon \mathbb{C} \rightarrow X  \otimes_{\mathbb{C}} X^*, \lambda \mapsto \lambda \sum_{i=1}^mx_i \otimes x_i^*\) that form (part of) the rigid structure of \(\vect_{\C}^{\mathrm{fd}}\). Consider for \(f \in \End_A(W)\) the map
\begin{align*}
    \Psi_f \colon A \cong A \otimes_{\mathbb{C}} \mathbb{C} \xrightarrow{\id_A \otimes \, \mathrm{coev}_X} A \otimes_{\mathbb{C}} X  \otimes_{\mathbb{C}} X^* \xrightarrow{\pi \, \otimes \, \id_{X^*}} W  \otimes_{\mathbb{C}} X^* \xrightarrow{f \, \otimes \, \id_{X^*}} \\ W  \otimes_{\mathbb{C}} X^* \xrightarrow{\iota \, \otimes \, \id_{X^*}} A  \otimes_{\mathbb{C}} X  \otimes_{\mathbb{C}} X^* \xrightarrow{\id_A \otimes \, \widetilde{\mathrm{ev}}_X} A  \otimes_{\mathbb{C}} \mathbb{C} \cong A \xrightarrow{\phi} \mathbb{C},
\end{align*}
with the canonical identification \(A  \otimes_{\mathbb{C}} \mathbb{C} \cong A\). We represent morphisms graphically as, for example
\begin{align*}
\\[-2em]
    f \,\, = \, \, \adjincludegraphics[valign=c, scale = 0.9]{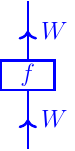} \, \, \, \, \, \, \, \, \, \, \, \, \, \, \, \,
    \widetilde{\mathrm{ev}}_X \,\, = \, \, \adjincludegraphics[valign=c, scale = 0.9]{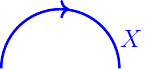}\, \, \, \, \, \, \, \, \, \, \, \, \, \, \, \,
    \mathrm{coev}_X \,\, = \, \, \adjincludegraphics[valign=c, scale = 0.9]{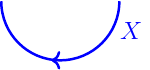} 
    \\[-2em]
\end{align*}
using the standard graphical calculus for rigid monoidal categories, here for the category \(\vect_{\C}^{\mathrm{fd}}\), where diagrams are read from bottom to top, tensor products are represented by horizontal juxtaposition, and the strands decorated by the monoidal unit are not drawn. We will not go into detail here about the rules of the graphical calculus, since we are only going to make use of it in one instance, namely in the proof of \cref{propPGPT} in \cref{secCat}. The interested reader may consult sources such as \cite{TV} for details. We can then represent the map \(\Psi_f\) pictorically through the image below: 
\begin{align*}
    \Psi_f \, \, = \, \,  \adjincludegraphics[valign=c, scale = 0.9]{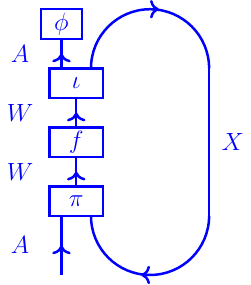}.
\end{align*}
As our goal is to encode pseudo-traces, we need to extract them from the linear map \(\Psi_f\).

\begin{lemma}
    \label{lemPTCM}
    For \(\phi \in \mathcal{S}(A)\) and \(f \in \End_A(W)\), the scalar \(\Psi_f(1_A)\) is independent of the choices of \(X, \iota\) and \(\pi\), and the pseudo-trace map \(\tr_{W}^{\phi} \colon \End_A(W) \rightarrow \mathbb{C}\) is obtained by \(\tr_{W}^{\phi}(f) = \Psi_f(1_A)\).
\end{lemma}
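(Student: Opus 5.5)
Fix a basis \(\{x_1,\dots,x_m\}\) of \(X\) and compute \(\Psi_f(1_A)\) explicitly; the answer will visibly be a pseudo-trace for an \(A\)-coordinate system built from \(\pi\) and \(\iota\). Independence of \(X,\iota,\pi\) then follows from the independence of \(\tr_W^{\phi}\) from the coordinate system, established after \cref{defPseudo} for right modules; the left-module version is identical, via \(\tr^\phi_W = \bar\phi\circ S\circ T^{-1}\).

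\textbf{Step 1 (the coordinate system).} Set \(u_i = \pi(1_A\otimes x_i)\in W\). Define \(\alpha_i\colon W\to A\) as \(\iota\) followed by \(\id_A\otimes x_i^*\colon A\otimes_{\C} X\to A\), so that \(\iota(w)=\sum_i \alpha_i(w)\otimes x_i\). Since \(A\) acts on \(A\otimes_{\C} X\) through the first factor, each \(\alpha_i\) is left \(A\)-linear. From \(\pi\circ\iota=\id_W\) and \(A\)-linearity of \(\pi\) we get
\begin{align*}
w=\pi(\iota(w))=\sum_i \pi(\alpha_i(w)\otimes x_i)=\sum_i \alpha_i(w)\triangleright u_i .
\end{align*}
This is the left-module analogue of \cref{eqPCS}, so \(\{u_i,\alpha_i\}\) is an \(A\)-coordinate system of \(W\).

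\textbf{Step 2 (evaluating the composite).} Follow \(1_A\) through \(\Psi_f\):
\begin{align*}
1_A\mapsto \sum_i 1_A\otimes x_i\otimes x_i^* \mapsto \sum_i u_i\otimes x_i^* \mapsto \sum_i f(u_i)\otimes x_i^* \mapsto \sum_{i,j}\alpha_j(f(u_i))\otimes x_j\otimes x_i^*.
\end{align*}
Applying \(\id_A\otimes\widetilde{\mathrm{ev}}_X\) contracts \(x_j\) against \(x_i^*\), giving \(\delta_{ij}\), so the result is \(\sum_i\alpha_i(f(u_i))\). Then \(\phi\) yields \(\Psi_f(1_A)=\sum_i\phi(\alpha_i(f(u_i)))=\tr_W^\phi(f)\), matching \cref{defPseudo}.

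\textbf{Step 3 (independence).} Since \(\tr^\phi_W(f)\) does not depend on the chosen coordinate system, \(\Psi_f(1_A)\) is independent of \(X\), \(\pi\) and \(\iota\). Independence of the basis of \(X\) is already automatic, because \(\mathrm{coev}_X\) is basis-independent.

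The main obstacle is modest: making sure the left-module conventions match. Concretely, one checks that \(\alpha_i\) is \(A\)-linear for the left action on \(A\otimes X\) via the first factor, and that the left version of the independence argument (\(T\colon \Hom_A(W,A)\otimes_A W\to\End_A(W)\) being an isomorphism for projective \(W\)) goes through exactly as in the right-module case.
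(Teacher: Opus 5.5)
Your proposal is correct and follows essentially the same route as the paper. You use the same coordinate system \(u_i=\pi(1_A\otimes x_i)\), \(\alpha_i=(\id_A\otimes x_i^*)\circ\iota\), evaluate \(\Psi_f(1_A)\) directly to get \(\sum_i\phi(\alpha_i(f(u_i)))=\tr_W^{\phi}(f)\), and deduce independence of \(X,\iota,\pi\) from the coordinate-system independence of \(\tr_W^{\phi}\).
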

\begin{proof}
We want to obtain an \(A\)-coordinate system from the given data. Thus, set \(u_i = \pi(1_A \otimes x_i)\) and \(\alpha_i = \mu \circ (\id_A \otimes \, x_i^*) \circ \iota\) for \(1 \leq i \leq m\). This can be cast in the equivalent form \(\iota(w) = \sum_{j=1}^m\alpha_j(w) \otimes x_j\) for any \(w \in W\). First, let us check that this defines an \(A\)-coordinate system. For \(w \in W\),
\begin{align*}
    \sum_{i=1}^m\alpha_i(w) \triangleright u_i = \pi\Big(\sum_{i=1}^m\alpha_i(w) \otimes x_i\Big) = (\pi \circ \iota)(w) = w.
\end{align*}
Second, let us explicitly compute \(\Psi_f(1_A)\) and compare with \(\tr_{W}^{\phi}(f)\). We have
\begin{align*}
   \Psi_f(1_A) &= \phi \circ (\id_A \otimes \, \widetilde{\mathrm{ev}}_X) \circ ((\iota \circ f \circ \pi) \otimes \id_{X^*}) \Big( \sum_{i = 1}^m 1_A \otimes x_i \otimes x_i^* \Big) \\
   &= \phi \Big(\sum_{i = 1}^m(\id_A \otimes \, \widetilde{\mathrm{ev}}_X)(\iota(f(u_i)) \otimes x_i^*) \Big) 
    = \phi \Big(\sum_{i = 1}^m(\id_A \otimes \, \widetilde{\mathrm{ev}}_X)\Big(\sum_{j = 1}^m\alpha_j(f(u_i)) \otimes x_j \otimes x_i^* \Big) \Big) \\
    &= \phi \Big(\sum_{i,j = 1}^m \alpha_j(f(u_i))\delta_{ij} \Big) = \phi \Big(\sum_{i= 1}^m \alpha_i(f(u_i)) \Big) = \tr_{W}^{\phi}(f),
\end{align*}
as desired. By this argument, it is also clear that \(\Psi_f(1_A)\) is independent of the choice of \(X, \pi\) and \(\iota\), because \(\tr_{W}^{\phi}\) is independent of the choice of \(A\)-coordinate system. 
\end{proof}

Decompose the unit of \(A\) as \(1_A = \sum_{i=1}^k\sum_{j = 1}^{k_i}e_{ij}\) where \(e_{ij}\) are mutually orthogonal primitive idempotents such that \(e_{ij}A \cong e_{i\ell}A\) and \(e_{ij}A \ncong e_{rs}A\) for \(i \neq r\). There is an induced equivalence relation \(\sim\) on the set of primitive idempotents. Choose representatives \(e_i = e_{i1}\) of the equivalence classes and set \(e = \sum_{i=1}^ke_i\). It is a common fact that \(eAe\), the \textit{basic algebra associated to} \(A\), is an algebra with unit \(e\) which is basic (\cite[Corollary 6.10]{L}). We can regard \(Ae\) as an \((A, eAe)\)-bimodule that is projective as both a left \(A\)- and right \(eAe\)-module.

Moreover, by \cite[Theorem 10.1]{NS}, \((A, \phi)\) is a symmetric algebra if and only if \((eAe, \phi|_{eAe})\) is. For a symmetric map \(\phi \in \mathcal{S}(A)\), the tuple \((A, \phi)\) does not necessarily form a symmetric algebra. The subspace \(\mathfrak{R}_{\phi} = \{a \in A| \phi(aA) = 0\}\) is a two-sided ideal in \(A\) such that \(\phi|_{\mathfrak{R}_{\phi}} = 0\). Thus, we may transport \(\phi\) to a well-defined symmetric function on the quotient \(A/\mathfrak{R}_{\phi}\), where we denote it by the same letter by abuse of notation. Then \((A/\mathfrak{R}_{\phi}, \phi)\) is a symmetric algebra.

\subsection{Miyamoto's pseudo-trace functions and modular invariance}
\label{subSecMiy}
In the following, we recall the necessary ingredients to state the main result of \cite{Miy}. We set the following convention for the rest of the article.
\begin{convention}
    \label{conv}
    \(V= \oplus_{n \in \mathbb{N}}V_n\) is a \(C_2\)-cofinite vertex operator algebra such that the Virasoro algebra acts nontrivially on every simple \(V\)-module.
\end{convention}
\(C_2\)-cofiniteness implies that \(V\)-modules have more structure:

\begin{theorem} [{\cite[Proposition 2.1]{AN}}, {\cite[Theorem 2.7]{Miy}}]
\label{thmWisA}
For \(V\) as above and \(M\) a non-zero finitely generated weak \(V\)-module, there are \(r_1, \hdots, r_k \in \mathbb{C}\) such that \(r_i - r_j \notin \mathbb{Z}\) for \(i \neq j\) and \(d_1, \hdots, d_k \in \mathbb{N}\setminus \{0\}\) such that \(M = \bigoplus_{i = 1}^k \bigoplus_{m = 0}^{\infty}M_{(r_i + m)}\), where \(M_{(r_i + m)} = \{w \in M|(L_0 - r_i - n)^{d_i }w = 0 \}\) is finite-dimensional and  \(M_{(r_i)} \neq 0\). In particular, \(M\) is a logarithmic module. 
\end{theorem}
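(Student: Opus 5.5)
The plan is to combine \(C_2\)-cofiniteness with the finite-dimensionality of the Zhu algebras and the behaviour of \(L_0\) on finitely generated modules. Let \(T \subseteq M\) be a finite generating set. First I reduce to the case where each generator lies in a finite-dimensional subspace stable under \(L_0\) and under the zero-modes \(o(v)\). The statement of \cref{thmWisA} is cited from \cite{AN, Miy}, so I take the following input from Miyamoto's theory (\cite[Theorem 2.7]{Miy}): for \(C_2\)-cofinite \(V\), a finitely generated weak module satisfies
\begin{align*}
M = \spann_{\C}\{v^1_{(m_1)}\cdots v^k_{(m_k)}t \mid v^i \in S,\ m_1 < \dots < m_k,\ t \in T'\},
\end{align*}
where \(S\) is a finite set of homogeneous representatives of \(V/C_2(V)\) and \(T'\) is a finite set. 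In particular \(M\) is \(C_1\)-cofinite as a module, and \cref{lemC2C1} is used in this step.

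Next I establish the grading. The vector \(\omega\) yields \(L_0\), and \(\Omega_n(M)\) is an \(A_n(V)\)-module for every \(n\) (\cite[Theorem 3.2]{DLM2}). Since \(A_n(V)\) is finite-dimensional by \cref{propFMSM}, any finitely generated \(A_n(V)\)-submodule is finite-dimensional. I show that each \(t \in T\) lies in \(\Omega_n(M)\) for some \(n\); this uses the spanning set above together with the truncation \(v_{(m)}t = 0\) for \(m \gg 0\). Then \(A_n(V)\cdot t\) is a finite-dimensional subspace stable under \(o(\omega) = L_0\), so \(t\) decomposes into generalized \(L_0\)-eigenvectors. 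The commutator identity \([L_0, v_{(m)}] = (\wt v - m - 1)v_{(m)}\) shows that \(v_{(m)}\) shifts generalized eigenvalues by integers, and that it maps generalized eigenvectors to generalized eigenvectors. Hence \(M = \bigoplus_{\lambda} M_{(\lambda)}\), where \(\lambda\) ranges over finitely many cosets \(r + \mathbb{Z}\).

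For finite-dimensionality and lower-boundedness I again use the PBW-type spanning set with strictly increasing modes from \(S\). Its elements of generalized weight \(\mu + N\) above a generator of weight \(\mu\) are bounded in number, so each \(M_{(\lambda)}\) is finite-dimensional. The weights are bounded below within each coset because only finitely many lowering modes of elements of \(S\) act nontrivially on the finite-dimensional span. Choosing \(r_i\) as the minimal weights in each coset gives \(M_{(r_i)} \neq 0\) and the stated decomposition. A uniform \(d_i\) exists because the Jordan block sizes are bounded: the \(L_0\)-nilpotency on \(M_{(r_i+m)}\) is controlled by its value on the generating subspaces, since the action of \(v_{(m)}\) intertwines \(L_0 - \lambda\) with \(L_0 - \lambda - (\wt v - m - 1)\).

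The main obstacle is the spanning-set theorem with ordered modes, which genuinely needs \(C_2\)-cofiniteness and a careful filtration argument in the style of Gaberdiel--Neitzke or Miyamoto. If I cannot reprove it, I would cite \cite{Miy} directly. The remaining steps are routine linear algebra. The assumption that the Virasoro algebra acts nontrivially on simples is not needed for this statement.
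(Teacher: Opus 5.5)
The paper gives no proof of this statement; it only cites \cite{Miy, AN}. Your route is the standard one behind those results: take the spanning set as a black box, use finite-dimensionality of \(A_n(V)\) to make the generators \(L_0\)-locally finite, propagate with \([L_0,v_{(m)}]=(\wt v-m-1)v_{(m)}\), and count ordered monomials. The later steps are sound. The generalized eigencomponents of \(t\) are polynomials in \(L_0\) applied to \(t\). Every monomial applied to such a component is killed by \((L_0-\lambda)^d\) for a single uniform \(d\), so \(M_{(\lambda)}\) is spanned by monomials of eigenvalue \(\lambda\). For each total degree, only finitely many strictly ordered monomials survive on the generators, and their total degree is bounded below.

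The gap is the claim that each \(t\in T\) lies in some \(\Omega_n(M)\). Truncation gives \(v_{(m)}t=0\) only beyond a bound that depends on \(v\). Membership in \(\Omega_n(M)\) instead requires \(u_{(m)}t=0\) for every homogeneous \(u\in V\) with \(\wt u-m-1<-n\), uniformly over the infinite-dimensional \(V\). This uniformity is exactly where \(C_2\)-cofiniteness has to do its work.

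A spanning statement of the bare form \(M=\spann\{\dots\}\) cannot supply this. \(M\) carries no grading a priori, so knowing that \(u_{(m)}t\) is some combination of ordered monomials says nothing about whether it vanishes. What you need is the degree-homogeneous form of the spanning lemma. This is what the filtration proofs actually give, because every rewriting step preserves total degree: expanding modes of elements of \(C_2(V)\) by associativity, and reordering by commutators. In that form, \(u_{(m)}t\) is a combination of ordered \(S\)-monomials applied to \(t\), all of total degree exactly \(\wt u-m-1\).

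From there the argument closes as follows. Choose \(K\) with \(s_{(m)}t=0\) for all \(s\in S\) and \(\wt s-m-1<-K\); this is possible because \(S\) is finite. A monomial that survives on \(t\) has rightmost degree \(\ge -K\) and strictly larger degrees to its left. Its total degree is therefore at least \(\min_k\bigl(-kK+k(k-1)/2\bigr)\). Hence \(u_{(m)}t=0\) whenever \(\wt u-m-1\) lies below this bound, which gives \(t\in\Omega_n(M)\) for \(n\) roughly \(K^2/2\).

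Two smaller points. The strict ordering should be on the degrees \(\wt v^i-m_i-1\), not on the indices \(m_i\); the two differ once the weights in \(S\) differ. Also, \cref{lemC2C1} plays no role here: the spanning set uses representatives of \(V/C_2(V)\) and never involves \(C_1(V)\).
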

Additionally, one can show that the numbers \(r_1, \hdots, r_k\) from \cref{thmWisA} are all conformal weights of \(V\), and that the set of conformal weights \(\Lambda\) is finite. Since every simple \(V\)-module is finitely generated, a consequence of the above theorem is that all simple weak \(V\)-modules are ordinary modules.

\smallskip
We review the concept of pseudo-trace functions (on the VOA-module level) as encountered in \cite{Miy}. The goal in \cite{Miy} is to establish a spanning set of the space of one-point functions \(\mathcal{C}_1(V)\) via symmetric functions on a higher Zhu algebra \(A_n(V)\), where \(n\) is a sufficiently large integer: By \cite[Lemma 3.5.1, Proposition 4.5.3]{AN2}\footnote{The preprint \cite{AN2} was subsequently withdrawn; however, version 1, which contains detailed proofs of several results of \cite{Miy}, remains available on arXiv.}, for any weak \(V\)-module \(M\), there is a natural number \(n_a \in \mathbb{N}\) such that the \(n\)'th graded component \(M(n) = \bigoplus_{i = 1}^kM_{(r_i + n)}\) is nonzero for every \(n > n_a\). Miyamoto then uses the modules \(L_n(T)\) for suitable \(A_n(V)\)-modules \(T\) and defines pseudo-trace functions on them to obtain his spanning set (see \cref{thmMiy} below). 

To state the result, we first require a bit of preparation. Recall that for a \(C_2\)-cofinite vertex algebra, \(A_n(V)\) is of finite dimension by \cref{propFMSM}. Consider the algebra \(A_n(V)/\mathfrak{R}_{\phi}\), where \(\phi \in \mathcal{S}(A_n(V))\) is some symmetric function. Denote by \(e\), as in \cref{subsecPseudo}, the sum of representatives of equivalence classes of idempotents with respect to the equivalence relation \(\sim\). We can form the associated basic algebra \(B = e\left(A_n(V)/\mathfrak{R}_{\phi} \right)e \). Furthermore, let \(T = \left(A_n(V)/\mathfrak{R}_{\phi} \right)e\), which naturally carries the structure of an \((A_n(V), B)\)-bimodule.

\looseness = 1 By \cite[Lemma 3.5.1]{AN2}, there is a nonnegative integer \(n_b\) such that for \(n > n_b\), under the condition that the Virasoro algebra acts nontrivially on every simple \(V\)-module \(M\), the components \(M_{r+n}\) are nonzero. Here, \(r\) is the conformal weight of \(M\). Let further \(n_a\) be an even larger integer with \(n_a > n_b + \mathrm{max}_{r, s \in \Lambda}\{|\Re(r) - \Re(s)|\}\) (for an extended discussion of this choice, see \cite[Definition~4.5.2, Proposition~4.5.3, Section 8.1]{AN2}).

\begin{theorem}[{\cite[Lemma 4.5.1, Theorem 4.5.5]{AN2}}]
\label{thmLproj}
The \(V\)-module \(L_n(T)\) is a right \(B\)-module with \(B\)-action defined by \(\left( v_1(i_1) \hdots v_k(i_k)t\right) \triangleleft b = v_1(i_1) \hdots v_k(i_k)(tb)\) for every \(t \in T, b \in B\). 
For \(n > n_a\), every component \(L_n(T)(m)\) is a finitely generated projective right \(B\)-module.
\end{theorem}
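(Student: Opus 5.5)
The right \(B\)-action is handled first, then projectivity of the graded components is reduced to projectivity of \(T\) over \(B\).

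\emph{Well-definedness of the action.} I would lift it to the universal module. Since \(T = (A_n(V)/\mathfrak{R}_\phi)e\) is an \((A_n(V),B)\)-bimodule, right multiplication \(\rho_b\colon T\to T\), \(t\mapsto tb\), is a morphism of left \(A_n(V)\)-modules for each \(b\in B\). Through \(T = \overline{M}_n(T)(n)\subseteq \Omega_n(\overline{M}_n(T))\), \cref{thmMUniv} extends \(\rho_b\) uniquely to a \(V\)-endomorphism \(\overline{\rho}_b\) of \(\overline{M}_n(T)\). On spanning vectors this extension is forced to be \(v_1(i_1)\cdots v_k(i_k)t\mapsto v_1(i_1)\cdots v_k(i_k)(tb)\). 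Uniqueness in \cref{thmMUniv} gives \(\overline{\rho}_{bb'}=\overline{\rho}_{b'}\circ\overline{\rho}_b\), so this is a right \(B\)-action by \(V\)-module maps, and it preserves the grading.

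Next the action must descend to \(L_n(T)\). This quotient is by the maximal graded submodule \(J\) meeting degree \(n\) trivially, i.e.\ \(J\cap \overline{M}_n(T)(n)=0\). Since \(\overline{\rho}_b\) is a graded \(V\)-map, \(\overline{\rho}_b(J)\) is again a graded submodule. It meets degree \(n\) inside \(\overline{\rho}_b(J(n))=0\), so \(\overline{\rho}_b(J)\subseteq J\) by maximality. Hence each \(L_n(T)(m)\) is a right \(B\)-module, and it is finitely generated because it is finite-dimensional (\cref{thmWisA}).

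\emph{Projectivity.} Here I would use the structure of \(B\). The ring \(\bar A=A_n(V)/\mathfrak{R}_\phi\) is symmetric with respect to \(\phi\), so its basic algebra \(B=e\bar Ae\) is symmetric by \cite[Theorem 10.1]{NS}, hence self-injective. Over a self-injective algebra, finitely generated projective modules coincide with injective ones, and hence with modules whose dual is projective. The components \(L_n(T)(m)\) are produced from \(T\) by operators that commute with the \(B\)-action. I would therefore aim to show that each \(L_n(T)(m)\) is a direct summand of a finite direct sum of copies of \(T\) as a right \(B\)-module; since \(T=\bar Ae\) is projective over \(B\), projectivity would follow.

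To produce such a splitting, I would take the \(A_n(V)\)-module structure on \(\Omega_n\) of \(L_n(T)\) shifted to degree \(m\), or use \(A_{n'}(V)\) for large \(n'\). The aim is a morphism from \(L_n(T)(m)\) back to copies of \(T(n)\) using modes of \(V\) that return degree \(m\) to degree \(n\). The hypothesis \(n>n_a\) should enter exactly here: every simple quotient \(L_n(S)\) with \(S\) a simple constituent of \(T\) has nonzero components in all relevant degrees. This lets the composite degree \(n\to m\to n\) maps generate everything, so that \(L_n(T)(m)\) is generated over \(B\) by images of \(T\) and is cogenerated into copies of \(T\). Self-injectivity of \(B\) then turns an embedding \(L_n(T)(m)\hookrightarrow T^{\oplus N}\) of right \(B\)-modules into a split one.

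\emph{Main obstacle.} The hardest step is this embedding of \(L_n(T)(m)\) into \(T^{\oplus N}\). Its injectivity is where the nondegeneracy defining \(L_n\) (no submodule avoiding degree \(n\)) has to be combined with the weight bound \(n_a > n_b + \max_{r,s\in\Lambda}|\Re(r)-\Re(s)|\). I would first attempt it by dualizing with the contragredient module, following \cite[Section 4.5]{AN2}.
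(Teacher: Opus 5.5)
The paper gives no proof of this statement; it is quoted from \cite{AN2}. Your construction of the right \(B\)-action is correct. Extending \(t\mapsto tb\) through \cref{thmMUniv}, and noting that the kernel \(J\) of \(\overline{M}_n(T)\to L_n(T)\) (the largest graded submodule with \(J(n)=0\)) is stable under every \(\overline{\rho}_b\), is exactly what is needed. The projectivity part, however, has a genuine gap. Its decisive step is the claim that self-injectivity of \(B\) turns an embedding \(L_n(T)(m)\hookrightarrow T^{\oplus N}\) into a split one, and this is false. Injectivity of \(T^{\oplus N}\) lets you extend maps \emph{into} it. But a submodule of an injective module is a direct summand only if it is itself injective, and over the self-injective algebra \(B\) that is the same as being projective, which is what you are trying to prove. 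For instance, over the symmetric algebra \(\mathbb{C}[\varepsilon]/(\varepsilon^2)\), the simple module \(\varepsilon B\) is both a quotient and a submodule of \(B\), yet it is not projective. So being generated by, and cogenerated into, copies of \(T\) gives nothing. A splitting needs \(B\)-linear maps \(f_i\colon L_n(T)(m)\to T\) and \(g_i\colon T\to L_n(T)(m)\) with \(\sum_i g_i f_i=\id\). These are composites \(m\to n\to m\), not \(n\to m\to n\), and your outline does not produce them.

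Moreover, the step you single out as the main obstacle is automatic for every \(n\). The equality \(L_n(T)(m)=U(\mathfrak{g}(V))_{m-n}T\) gives the surjection from copies of \(T\). By maximality of \(J\), finitely many homogeneous \(x_1,\dots,x_N\in U(\mathfrak{g}(V))\) of degree \(n-m\) separate the points of the finite-dimensional space \(L_n(T)(m)\), so \(w\mapsto(x_iw)_i\) is a \(B\)-linear embedding into \(T^{\oplus N}\). Hence \(n>n_a\) plays no role in your argument, which shows that the actual mechanism is missing.

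One way to supply it is an intrinsic test over the basic symmetric algebra \(B\): a finitely generated right \(B\)-module \(X\) is projective if and only if \(\{x\in X : x\,\mathrm{soc}(B)=0\}=XJ(B)\), where \(J(B)\) is the radical. Two facts let you apply this to \(X=L_n(T)(m)\). First, \(T\) is projective over \(B\). Second, both \(L_n(T)\) and \(L_n(T/TJ(B))\) are nondegenerate in the sense above. Together they show that the left-hand side is exactly the kernel of the natural surjection \(L_n(T)(m)\to L_n(T/TJ(B))(m)\). Projectivity therefore amounts to injectivity of \(L_n(T)(m)/L_n(T)(m)J(B)\to L_n(T/TJ(B))(m)\). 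That is a comparison between the top of \(L_n(T)(m)\) and the degree-\(m\) components of the simple modules \(L_n(S)\), where \(S\) runs over the simple summands of \(T/TJ(B)\). This comparison is where information on the components of simple modules, i.e.\ a condition like \(n>n_a\), has to be used, and your proposal does not address it.
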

Note that \(L_n(T)\) is, in fact, a \((V, B)\)-bimodule. In the proof of Miyamoto's main result, he uses symmetric functions \(\phi\) that satisfy for any \([a] \in A_n(V)\) that \(\phi(([\omega] - c_V/24 - n - r)^{d+1} *_n [a]) = 0\) for suitable \(d \in \mathbb{N}\) and \(r \in \mathbb{C}\). Adding this assumption on \(\phi\), by definition of \(T\), we have 
\begin{align*}
    ([\omega] - c_V/24 - n - r)^{d+1}T = 0.
\end{align*}
\looseness=1 This implies that for any \(x = v_1(i_1) \hdots v_k(i_k)t \in L_n(T)(m)\) that \(v_1(i_1) \hdots v_k(i_k)(([\omega] - n) *_n t) = (L_0 - m)x\), using standard commutator identities for \(L_0\). Accordingly, on every component of \(L_n(T)\), 
\begin{align}
    \label{eqNP}
    (L_0 - m - r - c_V/24)^{d+1}L_n(T)(m) = 0,
\end{align}
\looseness = -1 which says that any component is a generalized eigenspace for the operator \(L_0\). Expressed in symbols, this means \(L_n(T)(m) = L_n(T)_{(r + c_V/24 + m)}\). Decompose the operator \(L_0 = L_0^s + (L_0 - L_0^s)\), where
\begin{align*}
    L_0^s|_{L_n(T)(m)} = (m + r + c_V/24)\id_{L_n(T)(m)},
\end{align*}
into its semisimple and nilpotent part. If \(V\) were rational, \(L_0\) would act semisimply on \(L_n(T)\), and we could define for \(\tau \in \mathbb{H}\) the operator \(q^{L_0} = q^{r+m+c_V/24}\) on any component subspace \(L_n(T)(m)\). But if \(V\) is not rational as in the present case, the best we can achieve are generalized eigenspace components. By using the decomposition of \(L_0\) and \cref{eqNP}, we can make sense of the expression \(q^{L_0}\) as
\begin{align*}
    q^{L_0-c_V/24} = q^{L_0^s - c_V/24 + (L_0 - L_0^s)} = q^{m+r}\sum_{j=0}^d\frac{1}{j!}(2 \pi i \tau (L_0 - c_V/24 - m - r))^j.
\end{align*}
For the following definition, note that \(o(v), q^{L_0}\) are grade-preserving operators for any \(v \in V\), and also \(B\)-endomorphisms -- that is, \(o(v), q^{L_0} \in \End_B(L_n(T)(m))\) for any \(m \in \mathbb{N}\). 
\begin{proposition}[{\cite[Section 4.1, Corollary 5.4]{Miy}}]
    The function \(\xi_{L_n(T)}^{\phi, B} \colon V \otimes_{\C} \C[E_4, E_6] \times \mathbb{H} \rightarrow \mathbb{C}\),
    \begin{align*}
        \xi_{L_n(T)}^{\phi, B}(v \otimes f, \tau) = f(\tau)\tr_{L_n(T)}^{\phi}\left( o(v)q^{L_0 - c_V/24} \right) 
        = f(\tau)\sum_{m=0}^{\infty} \tr_{L_n(T)(m)}^{\phi}\left( o(v)q^{L_0 - c_V/24} \right),
    \end{align*}
called a \emph{pseudo-trace function}, is well-defined and a one-point function, that is \(\xi_{L_n(T)}^{\phi, B} \in \mathcal{C}_1(V)\).
\end{proposition}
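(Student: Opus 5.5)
We would follow Miyamoto's strategy and reduce the statement to the properties of the ordinary trace functions studied by Zhu, adapted to pseudo-traces.

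\emph{Step 1: well-definedness and convergence.} By \cref{thmLproj}, for \(n > n_a\) every component \(L_n(T)(m)\) is a finitely generated projective right \(B\)-module. Hence \(\tr^{\phi}_{L_n(T)(m)}\) is defined as in \cref{defPseudo}, independently of the coordinate system. The operators \(o(v)\) and \(q^{L_0-c_V/24}\) are grade-preserving and commute with the right \(B\)-action, since \(B\) acts on \(T\) and the modes act on the left. By \cref{eqNP}, \(q^{L_0-c_V/24}\) is a finite polynomial in \(\tau\) times \(q^{m+r}\) on each component. So each summand is \(q^{m+r}\) times a polynomial in \(2\pi i\tau\) of degree at most \(d\).

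For convergence, we would bound \(|\tr^\phi_{L_n(T)(m)}(o(v)(L_0-\cdots)^j)|\) by \(C\,\dim L_n(T)(m)\) times an operator norm. We would then use that \(\dim L_n(T)(m)\) grows subexponentially. This follows from \(C_2\)-cofiniteness via a PBW-type spanning set of \(L_n(T)\) generated by the finite-dimensional space \(T\). Absolute convergence on \(\mathbb{H}\) then yields (C1), and (C2) holds by the definition of the extension to \(V\otimes\C[E_4,E_6]\).

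\emph{Step 2: the relations (C3) and (C4).} We would mimic Zhu's recursion formulas, with ordinary traces replaced by pseudo-traces. The only input about the trace used in Zhu's proof is cyclicity, \(\tr(AB)=\tr(BA)\) for operators mapping between graded pieces. For pseudo-traces this is supplied by the cyclicity property of \cite[Proposition 4.2]{AN}: for \(g\colon L_n(T)(m)\to L_n(T)(m+k)\) and \(f\) in the other direction, both \(B\)-linear, \(\tr^\phi(f g)=\tr^\phi(g f)\).

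Concretely, \(u(\wt u -1 - k)\) shifts degrees by \(k\) and commutes with \(B\). Using \(q^{L_0}u(j)q^{-L_0}=q^{k}u(j)\) on the semisimple part, together with the fact that the nilpotent part commutes with the modes up to the same shift, we obtain the identity
\begin{align*}
\tr^\phi\big(o(u_{(j)}v)q^{L_0}\big)\text{-relations}
\end{align*}
leading to the vanishing on \(u[0]v\) and on \(u[-2]v+\sum_k(2k-1)u[2k-2]v\otimes E_{2k}\), exactly as in Zhu's argument. The differential equation (C4) comes from the same computation with \(u=\omega\), using \(\partial_q\) applied to \(q^{L_0}\), which is valid for the nonsemisimple exponential since \(q\partial_q q^{L_0}=L_0q^{L_0}\).

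\emph{Main obstacle.} The main obstacle is justifying the Zhu manipulations in this setting. These involve infinite sums over \(m\) and commutation of \(q^{L_0}\) with modes when \(L_0\) is not semisimple, and they require transporting cyclicity between different graded components that are projective over \(B\). Projectivity of all components is needed so that the cyclicity identity applies; that is exactly why we insist on \(n>n_a\). Everything else is formal, following \cite{Z} and \cite[Section 5]{Miy}.
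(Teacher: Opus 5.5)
Your Step 2 is the right mechanism; the paper itself only quotes \cite{Miy}, where this is carried out. The modes are right $B$-linear, and $q^{L_0}u(j)q^{-L_0}=q^{k}u(j)$ holds exactly for the non-semisimple $L_0$, because $L_0-L_0^s$ commutes with every mode. Cyclicity of $\tr^{\phi}$ between the projective components $L_n(T)(m)$ and $L_n(T)(m+k)$ (this is where $n>n_a$ enters) then replaces cyclicity of the ordinary trace in Zhu's recursion. This yields (C3), and also (C4) if you take $u=\tilde\omega$, so that $o(\tilde\omega)=L_0-c_V/24$. Contrary to your assessment, these manipulations are unproblematic when done componentwise.

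\textbf{The gap is Step 1.} The estimate $|\tr^{\phi}_{L_n(T)(m)}(o(v)N^j)|\le C\dim L_n(T)(m)\cdot\|o(v)\|$ has no content as stated. There is no norm on the components $L_n(T)(m)$ for which $o(v)$ is controlled uniformly in $m$. Subexponential growth of $\dim L_n(T)(m)$, which does follow from $C_2$-cofiniteness, says nothing about the size of the matrix coefficients of $o(v)$. Re-expanding $o(v)w$ in a spanning set via commutator formulas and $C_2$-relations naturally produces bounds like $C^m$. Such a bound gives convergence only for $|q|<1/C$, not on all of $\mathbb{H}$ as (C1) requires. So your proposal does not establish that the pseudo-trace function is even well-defined as a function on $\mathbb{H}$.

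In \cite{Z} and \cite{Miy}, convergence is not obtained from an a priori estimate but from the recursion relations themselves, so the order of your steps must be reversed.
\begin{enumerate}[(1)]
\item Prove (C3) and (C4) as identities of formal series in $q^{r}\C[\tau][[q]]$. All the manipulations of Step 2 are finite on each component, so this is legitimate.
\item Use $C_2$-cofiniteness to show that $V\otimes\C[E_4,E_6]/\calo(V)$ is a finitely generated $\C[E_4,E_6]$-module, say generated by $w_1,\dots,w_p$.
\item By (C3) the formal function factors through this quotient. Then (C4) shows that $F=(S(w_1),\dots,S(w_p))$ satisfies a first-order linear system $q\partial_q F=A\,F$ with entries of $A$ in $\C[E_2,E_4,E_6]$.
\item This system has a regular singular point at $q=0$ and holomorphic coefficients on $0<|q|<1$. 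Hence its formal solutions, of the shape $q^{r}\times(\text{polynomial in }\log q)\times(\text{power series})$, converge there.
\item Every $S(v)$ is a $\C[E_4,E_6]$-combination of the $S(w_i)$. This gives convergence and holomorphy on $\mathbb{H}$, and only then (C1)–(C4) as genuine functions.
\end{enumerate}
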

\begin{remark}
    Without entering into the details, we mention that in \cite{Miy}, the underlying pseudo-trace maps are defined somewhat differently than in \cref{defPseudo}. However, the two approaches are reconciled in \cite{A}, and we can therefore freely make use of the latter description.
\end{remark}
We state the main result of \cite{Miy} alluded to earlier. 
\begin{theorem}[{\cite[Theorem 5.5]{Miy}}]
\label{thmMiy}
Let \(V = \bigoplus_{m = 0}^{\infty}V_m\) be a \(C_2\)-cofinite VOA such that the Virasoro algebra acts nontrivially on every simple \(V\)-module. Then, \(\mathcal{C}_1(V)\) is spanned by pseudo-trace functions. More precisely, for \(n > n_a\), there are symmetric linear functions \(\phi_1, \hdots, \phi_k \in \mathcal{S}(A_n(V))\) so that \((B_i, \phi_i)\), where \(B_i = e(A_n(V)/\mathfrak{R}_i)e\), are symmetric algebras, and \((A_n(V), B_i)\)-modules \(T_i = (A_n(V)/\mathfrak{R}_i)e\) with \(\mathfrak{R}_i = \mathfrak{R}_{\phi_i}\) such that the collection \(\left\{\xi_{L_n(T_i)}^{\phi_i, B_i}\right\}_{i = 1}^k\) forms a basis for \(\mathcal{C}_1(V)\). Moreover, 
\begin{align*}
    \dim_{\C} \calc_1(V) = \dim_{\C}A_n(V)/[A_n(V), A_n(V)] - \dim_{\C}A_{n-1}(V)/[A_{n-1}(V), A_{n-1}(V)].
\end{align*}
\end{theorem}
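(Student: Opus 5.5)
Since this is Miyamoto's theorem, I would follow the strategy of \cite{Miy}, with the pseudo-trace formalism of \cref{subsecPseudo} in place of his original definition. The remark preceding the statement says this substitution is allowed.

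\emph{Step 1: expansion of a one-point function.} Take \(S \in \calc_1(V)\). By \cref{lemC2C1}, \(V\) is \(C_1\)- and \(C_2\)-cofinite, so \(V \otimes \C[E_4,E_6]/\calo(V)\) is a finitely generated \(\C[E_4,E_6]\)-module. Combining this with \labelcref{C4}, iterating \(L_{[-2]}\) on a fixed \(v\) gives a monic linear differential equation in \(q\partial_q\) with modular-form coefficients. This equation has a regular singularity at \(q=0\). Hence
\[
S(v,\tau)=\sum_{r}\sum_{j=0}^{d}\sum_{m\ge 0} a_{r,j,m}(v)\,\tau^{j}q^{r+m-c_V/24},
\]
where \(r\) ranges over finitely many classes modulo \(\mathbb{Z}\). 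Using \cref{propGrad}, I would show that these classes are contained in \(\Lambda+\mathbb{Z}\).

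\emph{Step 2: leading coefficients are symmetric functions.} Fix \(n>n_a\). I would show that the functionals \(v\mapsto a_{r,j,m}(v)\), for \(0\le m\le n\), and in particular for the top index \(m=n\), vanish on \(O_n(V)\). I would also show they satisfy \(a(u*_n v)=a(v*_n u)\), so they factor through symmetric functions on \(A_n(V)\). The tool is the recursion obtained by expanding the relations \(u[0]v\) and \(u[-2]v+\dots\) of \(\calo(V)\) in powers of \(q\). The Eisenstein series contribute corrections at higher \(q\)-order. Comparing coefficients order by order turns the generators of \(O_n(V)\), and the commutators \(u*_nv-v*_nu\), into vanishing statements. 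The log-degree \(j\) is encoded by the nilpotent part of \([\omega]\), as in the condition \(\phi(([\omega]-c_V/24-n-r)^{d+1}*_n[a])=0\).

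\emph{Step 3: realizing \(S\) by pseudo-traces and counting dimensions.} From a symmetric \(\phi\) obtained in Step 2, form \(B=e(A_n(V)/\mathfrak R_\phi)e\) and \(T\). By \cref{thmLproj} and \(n>n_a\), every \(L_n(T)(m)\) is a finitely generated projective \(B\)-module, so \(\xi^{\phi,B}_{L_n(T)}\in\calc_1(V)\) is defined. Its expansion coefficient at grade \(n\) is \(\tr^\phi_{L_n(T)(n)}(o(v)\cdots)=\tr^\phi_T(\cdots)\), which recovers \(\phi\) by the choice \(T=(A_n(V)/\mathfrak R_\phi)e\) and \(L_n(T)(n)\cong T\) (\cref{thmLn}). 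Subtracting gives \(S'=S-\xi\) whose coefficients up to grade \(n\) vanish. The recursion of Step 2, run upward, then forces \(S'=0\); this uses \(n>n_b\), i.e.\ nontrivial Virasoro action on simples, so that every simple module has a nonzero component at grade \(n\). This yields spanning.

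For the basis statement and the dimension formula, I would show that \(S\mapsto(\text{grade-}n\text{ coefficients})\) is an injective linear map into \(\mathcal S(A_n(V))\). Its image should be a complement of the symmetric functions pulled back along \(\pi_n\colon A_n(V)\to A_{n-1}(V)\). The reason is that pseudo-trace functions built from such pulled-back \(\phi\) coincide with ones from lower Zhu algebras, so they add no new functions. The dimension identity is then \(\dim\mathcal S(A)=\dim A/[A,A]\). A basis \(\phi_1,\dots,\phi_k\) of the complement can be chosen so that each \((B_i,\phi_i)\) is symmetric, by passing to \(A_n(V)/\mathfrak R_{\phi_i}\) and invoking \cite[Theorem 10.1]{NS}.

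The main obstacle is Step 2 together with the matching in Step 3: proving that the \(q\)-expansion coefficients are exactly symmetric on \(A_n(V)\), including the logarithmic parts. One must also control which coefficients survive modulo \(A_{n-1}(V)\). That is where the choice \(n>n_a\) and the detailed estimates of \cite{AN2} enter. I would handle the log-parts first by induction on the nilpotency degree \(d\).
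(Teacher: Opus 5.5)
The paper gives no proof of this statement; it quotes it from \cite[Theorem 5.5]{Miy}, with details in \cite{AN2}. Your outline has the right overall shape, but its closing step does not work as written, and the missing piece is the hard part of the theorem.

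\textbf{The closing step in Step 3.} By construction, $\xi^{\phi,B}_{L_n(T)}$ reproduces only the grade-$n$ coefficient of $S$. Its coefficients at grades $m<n$ are $\tr^{\phi}_{L_n(T)(m)}(o(v)q^{L_0-c_V/24})$, which are dictated by the module $L_n(T)$. Nothing in Steps 1--2 shows that these agree with the coefficients $a_{r,j,m}$ of $S$, so the claim that $S'=S-\xi$ vanishes up to grade $n$ is unfounded.

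The upward recursion would kill $S'$ once \emph{all} grades $\le n$ vanish. At order $m$, the homogeneous part of the relations is Zhu's leading-order system at exponent $r+m$. Its solutions are symmetric functions on $A_0(V)$ at $[\omega]$-eigenvalue $r+m$, and these vanish unless $r+m\in\Lambda$. They therefore vanish for all $m>\max_{r,s\in\Lambda}|\Re(r)-\Re(s)|$, and that is the part of $n>n_a$ actually used.

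Your appeal to $n>n_b$ is circular. That condition says every simple module has a nonzero grade-$n$ piece, which only helps once $S'$ is already known to be a combination of module pseudo-traces. Moreover, nothing carries grade-$n$ information downwards: the recursion only constrains $a_m$ in terms of lower coefficients.

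What is missing is an injectivity statement: for $n>n_a$, a one-point function is determined by its grade-$n$ symmetric function. Steps 1--2 do not supply it, and it is where the substance of Miyamoto's theorem lies.

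\textbf{The dimension count.} This inherits the same gap, and as formulated it is false without separated weights. Take your reference exponent $r$ with $r,r+1\in\Lambda$, and let $M'$ be simple of lowest weight $r+1$. The grade-$n$ coefficient of the trace function of $M'$ is $v\mapsto \tr_{M'(n-1)}o(v)$. This is nonzero, since $n-1>n_b$, and it is pulled back from $A_{n-1}(V)$, because $M'(n-1)\subseteq\Omega_{n-1}(M')$. Hence the image of your map meets $\pi_n^*\mathcal{S}(A_{n-1}(V))$ nontrivially and cannot be a complement to it.

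The formula instead calls for a map into the quotient $\mathcal{S}(A_n(V))/\pi_n^*\mathcal{S}(A_{n-1}(V))$, which is the dual of $\ker(\Tr(A_n(V))\to\Tr(A_{n-1}(V)))$. You would need both injectivity and surjectivity onto this quotient. The remark that pulled-back $\phi$ ``add no new functions'' establishes neither.

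\textbf{Step 2.} This is only asserted. The order-$m$ relation couples $a_m$ to all $a_{m-l}$ through the $q$-expansions of the Eisenstein series. Obtaining $a_m(O_n(V))=0$ and $*_n$-symmetry therefore needs an explicit identity relating the kernels $(1+z)^{\wt u+n}/z^{2n+2}$ to those expansions. Simply comparing coefficients order by order does not give it.
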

By \cite[Theorem 8.2.1]{AN2}, which provides additional detail for Miyamoto's result, we can assume the left \(A_n(V)\)-modules \(T_i\) to be such that the defining action does not factor through \(A_{n-1}(V)\).
\begin{remark}
    \label{remZhu}
    If \(V\) is in addition rational, then trace functions are sufficient to span \(\calc_1(V)\). More explicitly, if \(M^1, \hdots, M^{\ell}\) is a complete set of representatives of simple \(V\)-modules, then setting 
    \begin{align*}
        T_{M^i}(v, \tau) = \sum_{m=0}^{\infty} \tr_{M^i}\left( o(v)q^{L_0 - c_V/24} \right)
    \end{align*}
    using the ordinary trace, \cite[Theorem 5.3.1]{Z} states that \(\{T_{M^i}\}_{i = 1}^{\ell}\) is a basis for \(\calc_1(V)\).
\end{remark}
\begin{example}
    \label{exTr}
    Consider again the VOA \(V^B\) as defined in \cref{exComm}. One can lift any \(W \in B\lmod\) to a weak \(V^B\)-module \(\widehat{W}\) with the same underlying vector space and \(Y_{\widehat{W}}(a, z)b = a \triangleright b\). Conversely, any weak \(V^B\)-module is of the form \(\widehat{W}\) for a unique \(W \in B\lmod\). This follows since \(0 = Y_{\widehat{W}}(L_{-1}a, z) = \partial_zY_{\widehat{W}}(a, z)\) implies \(Y_{\widehat{W}}(a, z) = a_{(-1)}\), and weak associativity for \(Y_{\widehat{W}}\) implies that \(a \triangleright w := a_{(-1)}w \) defines a \(B\)-action on \(W\). Since \(L_0^{\widehat{W}} = 0\), we see that weak \(V^B\)-modules are in bijection with (possibly infinite-dimensional) \(B\)-modules, and ordinary \(V^B\)-modules are in bijection with finite-dimensional \(B\)-modules. It also follows, for \(\chi_W \colon B \rightarrow \C\) the character of \(W\), that \(T_W(a, \tau) = \chi_W(a)\) for \(a \in V^B, \tau \in \mathbb{H}\). 
    
    Now, let \(W^1, \hdots, W^{\ell}\) denote a complete set of representatives of simple \(B\)-modules. If \(B\) is semisimple, then a standard result in the representation theory of commutative algebras asserts that the collection of characters \(\chi_{W^1}, \hdots, \chi_{W^{\ell}}\) is a basis for the dual space \(B^{*}\). Since for \(B\) semisimple \(V^B\) is rational and by \cref{exComm}, \(\calc_1(V^B) \cong B^*\), that statement is equivalent to Zhu's theorem (cf. \cref{remZhu}), asserting that \(\{T_{\widehat{W^i}}\}_i\) is a basis for \(\calc_1(V^B)\). The situation changes drastically if \(B\) is not semisimple: For example, for the nonsemisimple two-dimensional algebra \(B = \C[\varepsilon]/(\varepsilon^2)\), there is only one irreducible representation up to isomorphism, on which \(t\) acts by 0. Thus, the two-dimensional space \(\calc_1(V^B)\) is not spanned by trace functions indexed by simple modules, so the assertion of Zhu's theorem does not hold.
    \end{example}

\subsection{A spanning set
of one-point functions in the sense of Arike--Nagatomo}
In this section, we reformulate the main result of \cite{Miy} in terms of pseudo-trace functions in the sense of \cite{AN}, which we first have to recall.

Let \(M\) be a finitely generated logarithmic \(V\)-module, which we assume in the form
\begin{align*}
    M = \bigoplus_{m = 0}^{\infty}M_{(r + m)} 
\end{align*}
(cf. \cref{thmWisA}), with \(r \in \mathbb{C}\) and each \(M_{(r + m)}\) being a generalized eigenspace of \(L_0\), that is, there exists \(d \in \mathbb{N}\setminus\{0\}\) such that \((L_0 - r - m)^{d}M_{(r + m)} = 0\). 
The vector space of \(V\)-endomorphisms \(\End_V(M)\) is an algebra for which \(M\) naturally forms a left module. Since any \(V\)-endomorphism is determined by its values on the finitely many generators of \(M\), the algebra \(\End_V(M)\) is finite-dimensional. Moreover, since \(V\)-homomorphisms are grade-preserving, any component \(M_{(r + m)}\) is a finite-dimensional \(\End_V(M)\)-module. Let \(P \subseteq \End_V(M)\) be a subalgebra such that each component \(M_{(r + m)}\) is projective as a \(P\)-module.\footnote{Equivalently, \(M\) is a projective \(P\)-module. Such an algebra always exists, for example \(P = \mathbb{C}\id_M\).} Summarizing, we have:
\begin{proposition}[{\cite[Proposition 4.3]{AN}}]
    \label{propP}
    Let \(V= \bigoplus_{m = 0}^{\infty}V_m\) a \(C_2\)-cofinite VOA and \(M\) a finitely generated logarithmic \(V\)-module. For an algebra \(P \subseteq \End_V(M)\) as above, \(P\) is finite-dimensional and each generalized eigenspace \(M_{(r + m)}\) of \(M\) is a finitely generated projective \(P\)-module.
\end{proposition}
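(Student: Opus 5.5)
Two things have to be shown: that \(P\) is finite-dimensional, and that each generalized eigenspace \(M_{(r+m)}\) is a finitely generated projective \(P\)-module. Projectivity is built into the choice of \(P\) (the condition "as above"), so the real work is finite-dimensionality and finite generation. Both will follow once we know that \(\End_V(M)\) is finite-dimensional and that each \(M_{(r+m)}\) is finite-dimensional.

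For \(\End_V(M)\), choose a finite generating set \(T=\{t_1,\dots,t_k\}\) of \(M\). A \(V\)-endomorphism \(f\) commutes with all modes \(v_{(m)}\), so it is determined by the tuple \((f(t_1),\dots,f(t_k))\).

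We may take each \(t_j\) homogeneous with respect to the generalized \(L_0\)-eigenspace decomposition of \cref{thmWisA}, replacing generators by their (finitely many) homogeneous components if necessary. A \(V\)-homomorphism commutes with \(L_0=\omega_{(1)}\), so it preserves generalized eigenspaces. Hence if \(t_j\in M_{(\lambda_j)}\), then \(f(t_j)\in M_{(\lambda_j)}\).

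Each \(M_{(\lambda_j)}\) is finite-dimensional by \cref{thmWisA}. Therefore the map
\[
f\mapsto (f(t_j))_j
\]
is an injective linear map \(\End_V(M)\hookrightarrow\bigoplus_j M_{(\lambda_j)}\) into a finite-dimensional space. So \(\End_V(M)\) is finite-dimensional, and hence so is its subalgebra \(P\).

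Next, each \(M_{(r+m)}\) is \(P\)-stable because elements of \(P\) commute with \(L_0\). Being finite-dimensional over \(\C\), it is spanned by finitely many vectors, and these generate it as a \(P\)-module. Combined with the assumption that each component is \(P\)-projective, this proves the statement.

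For the equivalence noted in the footnote, observe that \(M=\bigoplus_m M_{(r+m)}\) is a decomposition into \(P\)-submodules. A direct sum of modules is projective if and only if each summand is, which gives the claim.

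The only point requiring care is the homogeneity of the generators. This is handled by \cref{thmWisA}: since \(M\) is a direct sum of generalized eigenspaces, every element has finitely many homogeneous components, so a finite generating set can be replaced by a finite homogeneous one. I expect no further obstacle.
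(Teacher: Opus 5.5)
Your proof is correct and takes essentially the same route as the paper's short argument before the proposition: \(\End_V(M)\) is finite-dimensional because an endomorphism is determined by its values on finitely many generators, grade-preservation makes each \(M_{(r+m)}\) a finite-dimensional module, and projectivity is part of the hypothesis on \(P\). Your passage to homogeneous generators, which places each value \(f(t_j)\) in a finite-dimensional generalized eigenspace, makes explicit a step the paper leaves implicit.
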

Just as in the last section, we have a well-defined expression for \(q^{L_0}\). Let \(\phi \in \mathcal{S}(P)\) be a symmetric function. Due to \cref{propP}, we can define pseudo-trace maps \(\tr_{M_{(r + m)}}^{\phi} \colon M_{(r + m)} \rightarrow \mathbb{C}\) and thus obtain a pseudo-trace function in the same manner as in the last section, namely by setting 
\begin{align*}
    \xi_{M_{(r + m)}}^{\phi, P} \colon V \otimes_{\C} \C[E_4, E_6] \times \mathbb{H} \rightarrow \C, \, \, \, \xi_{M_{(r + m)}}^{\phi, P}(v \otimes f, \tau) = f(\tau)\tr_{M_{(r + m)}}^{\phi}\left(o(v)q^{L_0-c_V/24}\right)
\end{align*}
\looseness = -1 and further \( \xi_{M}^{\phi, P} = \sum_{m = 0}^{\infty} \xi_{M_{(r + m)}}^{\phi, P}\). For a finitely generated \(V\)-module with several lowest weights (but of course finitely many), we extend this assignment linearly. Since the substantial difference to Miyamotos pseudo-traces is only the algebra that is being used and not the construction of the pseudo-trace function itself, we keep the notation from earlier. Arike and Nagatomo prove the following result.
\begin{theorem}[{\cite[Theorem 4.1]{AN}}]
Let \(V = \bigoplus_{m = 0}^{\infty}V_m\) be a \(C_2\)-cofinite VOA and \(M\) a finitely generated logarithmic \(V\)-module. For a subalgebra \(P \subseteq \End_V(M)\) such that \(M\) is a projective left \(P\)-module and a symmetric linear function \(\phi\) on \(P\), the pseudo-trace function \(\xi_{M}^{\phi, P}\) is a well-defined one-point function, that is \( \xi_{M}^{\phi, P} \in \mathcal{C}_1(V)\).
\end{theorem}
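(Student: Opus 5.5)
The plan is to follow Zhu's original argument for trace functions. The one new ingredient is the cyclicity of pseudo-trace maps, which will replace the cyclicity of the ordinary trace throughout.

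\emph{Well-definedness.} First, every operator that appears must be a \(P\)-endomorphism of each graded piece. Since \(P \subseteq \End_V(M)\), every mode \(v_{(k)}\) commutes with \(P\). The operator \(L_0\) is a mode, so the semisimple part \(L_0^s\), being a scalar on each \(M_{(r+m)}\), commutes with \(P\), and hence so does the nilpotent part. Therefore \(o(v)q^{L_0-c_V/24}\) lies in \(\End_P(M_{(r+m)})\). By \cref{propP} the pseudo-trace \(\tr^{\phi}_{M_{(r+m)}}\) is defined, and the formal series \(\xi^{\phi,P}_M(v,\tau)\) makes sense as an element of \(\sum_m q^{r+m}\C[\tau]\), with \(\tau\)-degree bounded by the uniform nilpotency order \(d\). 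Property (C2) then holds by definition.

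\emph{The algebraic identities (C3) and (C4).} I would establish a pseudo-trace analogue of Zhu's recursion formula. For homogeneous \(u\) and \(k \neq \wt u - 1\), the mode \(u_{(k)}\) is a \(P\)-linear map \(M_{(r+m)} \to M_{(r+m+\wt u-k-1)}\). Together with the exact relation \(q^{L_0}u_{(k)} = q^{\wt u - k-1}u_{(k)}q^{L_0}\), which holds because the commutator \([L_0,u_{(k)}]\) is a scalar multiple of \(u_{(k)}\), the cyclicity property \(\tr^\phi_W(fg)=\tr^\phi_X(gf)\) for \(f\in\Hom_P(X,W)\) and \(g\in\Hom_P(W,X)\) lets me move \(u_{(k)}\) around the pseudo-trace. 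This yields
\begin{align*}
\tr^\phi_M\bigl(o(u)o(v)q^{L_0}\bigr) = \tr^\phi_M\bigl(o(u[-1]v)q^{L_0}\bigr) - \sum_{k\ge1} E_{2k}(\tau)\,\tr^\phi_M\bigl(o(u[2k-1]v)q^{L_0}\bigr),
\end{align*}
together with the companion identity \(\tr^\phi_M(o(u[0]v)q^{L_0})=0\). The proof is verbatim Zhu's proof, with \(\tr\) replaced by \(\tr^\phi\). The remaining generator of \(\mathcal{O}(V)\) in (C3) is handled in the same way. For (C4), I would take \(u=\widetilde\omega\), using \(o(\widetilde\omega) = L_0 - c_V/24\) together with the fact that \(2\pi i\,q\partial_q\) applied to \(q^{L_0}\) gives \((2\pi i)\,L_0 q^{L_0}\). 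The nilpotent part causes no problem here, since the derivative of the finite \(\tau\)-polynomial reproduces exactly the factor \(L_0\).

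\emph{Convergence and holomorphy (C1), the main obstacle.} Because the coefficients of the formal series involve powers of \(\tau\), I cannot directly quote Zhu's estimate. My approach would be to use \(C_2\)-cofiniteness, via \cref{lemC2C1}, as in Zhu and Miyamoto. The aim is to show that, for each \(v\), the formal function \(S(v,\tau)\) satisfies a linear differential equation in \(q\partial_q\) with a regular singularity at \(q=0\) and coefficients in \(\C[E_4,E_6]\). Such an equation arises by applying the identities just proved to \(L_{[-2]}^j v\) and reducing modulo \(\mathcal{O}(V)\), which has finite codimension. A formal solution of the form \(\sum_m q^{r+m}p_m(\tau)\), with polynomials \(p_m\) of bounded degree, of such an equation then converges on \(\mathbb{H}\) by the Frobenius theory of regular singular points. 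I expect the bookkeeping of the logarithmic (\(\tau\)-polynomial) terms in this Frobenius argument to be the delicate step. If it becomes troublesome, I would instead reduce to \cref{thmMiy}: realize \(\phi\) through a Zhu algebra \(A_n(V)\) acting on \(M\), and compare with Miyamoto's already convergent pseudo-trace functions.
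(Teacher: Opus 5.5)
The paper does not prove this statement itself but quotes it from \cite[Theorem 4.1]{AN}, and your outline is the standard Zhu--Miyamoto argument on which that result rests: derive the recursion formulas and (C3)--(C4) formally, with the cyclicity of pseudo-traces replacing that of the ordinary trace, and then obtain convergence from the regular-singular differential equations supplied by \(C_2\)-cofiniteness. The one step to state more carefully is the last. \(\mathcal{O}(V)\) is not of finite codimension; what \(C_2\)-cofiniteness gives is that \(V\otimes_{\C}\C[E_4,E_6]/\mathcal{O}(V)\) is a finitely generated \(\C[E_4,E_6]\)-module, and since for general \(v\) the terms \(S(L_{[2k-2]}L_{[-2]}^{j}v,\tau)\) are values of \(S\) at other vectors rather than derivatives of \(S(v,\tau)\), the clean way to conclude is to apply (C4) to a finite set of generators \(u_1,\dots,u_N\in V\), giving a first-order system \((2\pi i)^2q\partial_q F=A(q)F\) for \(F=(S(u_l,\tau))_l\) with entries of \(A\) in \(\C[E_2,E_4,E_6]\), whose formal solutions of the form \(\sum_m q^{\lambda+m}p_m(\tau)\) converge on \(0<|q|<1\).
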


Using the above notion of pseudo-trace function, we can rewrite \cref{thmMiy} in terms of \cite{AN}. 
\begin{proposition}
\label{thmPmain}
Let \(V = \bigoplus_{m=0}^{\infty}V_m\) be a \(C_2\)-cofinite VOA such that any simple \(V\)-module has a nontrivial action of the Virasoro algebra. For every \(n > n_a\), there are finite-dimensional symmetric algebras \((P_1, \Bar{\phi}_1) \hdots, (P_k, \Bar{\phi}_k)\) and \(P_i\)-modules \(T_i\), for \(i \in \{1, \hdots ,k\}\), such that the following are true:
\begin{itemize}
    \item For all \(i \in \{1, \hdots, k\}\), \(P_i\) is a subalgebra of \(\End_VL_n(T_i)\) and \(L_n(T_i)\) is a projective \(P_i\)-module.
    \item The space of one-point functions \(\calc_1(V)\) is spanned by the pseudo-trace functions \(\xi_{L_n(T_i)}^{\Bar{\phi}_i, P_i}\).
\end{itemize}
\end{proposition}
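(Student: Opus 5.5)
The plan is to read the data off \cref{thmMiy} and check that Miyamoto's pseudo-trace functions are Arike--Nagatomo pseudo-trace functions for a suitable subalgebra of endomorphisms. Fix \(n > n_a\) and take \(\phi_1, \hdots, \phi_k \in \mathcal{S}(A_n(V))\), the basic algebras \(B_i = e(A_n(V)/\mathfrak{R}_i)e\) and the bimodules \(T_i = (A_n(V)/\mathfrak{R}_i)e\) provided there. Then \((B_i, \phi_i|_{B_i})\) is a symmetric algebra and \(\{\xi_{L_n(T_i)}^{\phi_i, B_i}\}_i\) is a basis, in particular a spanning set, of \(\calc_1(V)\). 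By the remark after \cref{thmMiy} we may also assume that the \(A_n(V)\)-action on \(T_i\) does not factor through \(A_{n-1}(V)\), although this is not needed for spanning.

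Next I would build \(P_i\) from the right \(B_i\)-action of \cref{thmLproj}. Each \(b \in B_i\) defines \(\rho(b) \colon L_n(T_i) \to L_n(T_i)\), \(v_1(i_1)\hdots v_k(i_k)t \mapsto v_1(i_1)\hdots v_k(i_k)(tb)\). This map commutes with all modes \(v(m)\) and preserves the grading, so \(\rho(b) \in \End_V L_n(T_i)\). Since it is a right action, \(\rho\) is an algebra homomorphism \(B_i^{\mathrm{op}} \to \End_V L_n(T_i)\). I then set \(P_i = \rho(B_i^{\mathrm{op}})\), a subalgebra of \(\End_V L_n(T_i)\).

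The main point is to show that \(\rho\) is injective, so that \(P_i \cong B_i^{\mathrm{op}}\). Suppose \(\rho(b) = 0\). Restricting to the degree-\(n\) component \(L_n(T_i)(n) \cong T_i\) from \cref{thmLn} gives \(tb = 0\) for all \(t \in T_i\), and in particular \(eb = b = 0\) since \(e \in T_i\). One has to check that the identification \(L_n(T)(n) \cong T\) is compatible with the right \(B_i\)-action. This holds because the action is defined on generators \(t \in T\) and \(L_n(T)\) is a quotient of \(\overline{M}_n(T)\), whose degree-\(n\) part is \(T\).

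With \(P_i \cong B_i^{\mathrm{op}}\), define \(\bar\phi_i = \phi_i \circ \rho^{-1}\). Symmetry and non-degeneracy pass to the opposite algebra, because \(\phi(ab) = \phi(ba)\). Hence \((P_i, \bar\phi_i)\) is a finite-dimensional symmetric algebra. By \cref{thmLproj}, each \(L_n(T_i)(m)\) is a finitely generated projective right \(B_i\)-module, which is the same as a projective left \(P_i\)-module. Since each component \(L_n(T_i)(m)\) is the generalized eigenspace \(L_n(T_i)_{(r + c_V/24 + m)}\) by \cref{eqNP}, this is exactly the projectivity hypothesis of \cref{propP}. This proves the first bullet.

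For the second bullet, translate a right \(B_i\)-coordinate system \(\{u_j, \alpha_j\}\) of \(L_n(T_i)(m)\) into a left \(P_i\)-coordinate system \(\{u_j, \rho \circ \alpha_j\}\). The pseudo-trace maps then agree: \(\tr^{\phi_i}\) over \(B_i\) equals \(\tr^{\bar\phi_i}\) over \(P_i\) on \(\End_{B_i} = \End_{P_i}\). The operators \(o(v)q^{L_0 - c_V/24}\) lie in this endomorphism algebra, so summing over \(m\) gives \(\xi_{L_n(T_i)}^{\phi_i, B_i} = \xi_{L_n(T_i)}^{\bar\phi_i, P_i}\). Spanning then follows from \cref{thmMiy}.

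I expect the only real obstacle to be the injectivity and compatibility check on the top degree \(L_n(T_i)(n)\); everything else is bookkeeping between left and right conventions.
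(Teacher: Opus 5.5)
Your proposal is correct and follows the paper's proof essentially step by step. Like you, the paper sets \(P_i = \rho_i(B_i^{\mathrm{op}})\), gets injectivity of \(\rho_i\) from faithfulness of \(T_i \cong L_n(T_i)(n)\) via \(e \triangleleft b = b\), defines \(\bar\phi_i\) by \(\bar\phi_i \circ \rho_i = \phi_i\), and turns \(B_i\)-coordinate systems \(\{u_j, \alpha_j\}\) into \(P_i\)-coordinate systems \(\{u_j, \rho_i \circ \alpha_j\}\) to identify the two pseudo-trace functions before invoking \cref{thmMiy}; your explicit check that \(L_n(T_i)(n) \cong T_i\) respects the right \(B_i\)-action is something the paper leaves implicit.
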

\begin{proof}
Due to \cref{thmMiy} there are symmetric functions \(\phi_1, \hdots, \phi_k\) on \(A_n(V)\) defining symmetric algebras \((B_i, \phi_i)\) and corresponding \((A_n(V), B_i)\)-bimodules \(T_i\). They are given by \(B_i = e(A_n(V)/\mathfrak{R}_i)e\) and \(T_i = (A_n(V)/\mathfrak{R}_i)e\), respectively, where \(\mathfrak{R}_i = \mathfrak{R}_{\phi_i}\). Recall that every \(L_n(T_i)\) is a \((V, B_i)\)-bimodule, and projective as a right \(B_i\)-module (cf. \cref{thmLproj}). Because of this, the action maps
\begin{align*}
    \rho_i \colon B_i^{\mathrm{op}} \rightarrow \End_{\mathbb{C}}L_n(T_i), \,\,\, \rho_i(b)x = x \triangleleft b \,\, \text{ for } b \in B_i, x \in L_n(T_i),
\end{align*}
take values in \(\End_VL_n(T_i)\), and we can set \(P_i = \rho_i(B_i^{\mathrm{op}})\). We now show that, in fact, \(P_i \cong B_i^{\mathrm{op}}\) as algebras. First, note that, by definition, every \(T_i\) is a faithful \(B_i\)-module: If for \(b = eae \in B_i\), \(t \triangleleft b = 0\) for all \(t \in T_i\), then in particular \(b = e^2ae = e \triangleleft b = 0\). 
We have \(L_n(T_i)(n) \cong T_i\) by \cref{thmLn}, so the right \(B_i\)-modules \(L_n(T_i)\) are faithful as well. Thus, the restrictions of the maps \(\rho_i\) to their images are isomorphisms of algebras, and the \(L_n(T_i)\) are projective as \(P_i\)-modules.

As a consequence, there are symmetric linear functions \(\Bar{\phi}_i \in \mathcal{S}(P_i)\) with \(\Bar{\phi}_i \circ \rho_i = \phi_i\) for all \(1 \leq i \leq k\). Since for any \(B_i\)-coordinate system \(\{\alpha_j, u_j\}_{j=1}^s\) for \(L_n(T_i)(m)\), where \(\alpha_j \in \Hom_{B_i}(L_n(T)(m), B_i)\), the collection \(\{\rho_i \circ \alpha_j, u_j\}_{j=1}^s\) is a \(P_i\)-coordinate system by restricting \(\rho_i\) to its image, we have \(\xi_{L_n(T_i)}^{\phi_i, B_i} = \xi_{L_n(T_i)}^{\Bar{\phi}_i, P_i}\). \cref{thmMiy} now gives the claim.
\end{proof}
\begin{remark}
    \cref{thmPmain} allows us to dispense with the technicalities that come from dealing with higher Zhu algebras. As we will see in \cref{secCat}, we can conceptualize this further by also replacing the modules \(L_n(T_i)\) with a common module \(G\), a projective generator of a suitable representation category of \(V\).
\end{remark}

\section{Trace spaces, symmetric functions and one-point functions}
\label{secCat}
\subsection{Pseudo-Traces and projective generators}
\label{subSecProj}
The goal of this section is to relate pseudo-trace functions defined on a specific class of modules of a suitable \(C_2\)-cofinite VOA \(V\), which form a representation category \(\Rep(V)\), to pseudo-traces defined on a projective generator \(G\) of \(\Rep(V)\). This will allow us to show that the map \(\phi \mapsto \xi_G^{\phi}\) defines an epimorphism \(\mathcal{S}(\End_V(G)) \rightarrow \mathcal{C}_1(V)\) (cf. \cref{propSC1iso}).

We have assumed in the preceding sections that \(V\) is a nonnegatively graded \(C_2\)-cofinite VOA such that any simple module has a nontrivial action of the Virasoro algebra. In the following, we are going to use several results from \cite{GR} and \cite{Hua}. The first reference assumes several extra conditions, for instance that \(V\) be of CFT-type, which we are not going to assume. For many results of the second reference, including those we work with explicitly, one needs to assume that \(V\) be \(C_1\)-cofinite. Under our assumption, this condition is already covered by \cref{lemC2C1}, so we do not have to impose additional conditions on \(V\) and keep those from \cref{conv}.

In \cite{Hua} and \cite{GR}, the category \(\Rep(V)\) is defined as the category of \textit{quasi-finite-dimensional} \(V\)-modules. These are logarithmic \(V\)-modules \(M = \bigoplus_{\lambda \in \mathbb{C}}M_{(\lambda)}\), that is, \(M\) is graded by generalized eigenspaces for \(L_0\), such that for all \(r \in \mathbb{R}\), one has the finiteness condition
\begin{align*}
    \sum_{\lambda \in \C, \Re(\lambda) \leq r} \dim_{\mathbb{C}}M_{(\lambda)} < \infty.
\end{align*}
We already know that any finitely generated weak module is logarithmic (cf. \cref{thmWisA}). Combining this with some results from \cite{Hua}, we can dispense with the notion of quasi-finite-dimensional modules and write down objects in \(\Rep(V)\) in a simpler fashion. 
\begin{proposition}
\label{propQuasi}
Every nonzero quasi-finite-dimensional module \(M \in \Rep(V)\) is finitely generated, and we can find complex numbers \(r_1, \hdots, r_k \in \mathbb{C}\) and integers \(d_1, \hdots, d_k \in \mathbb{N}\setminus \{0\}\) so that
\begin{align*}
M = \bigoplus_{i = 1}^k \bigoplus_{m = 0}^{\infty}M_{(r_i + m)},
\end{align*}
where \(M_{(r_i + m)} = \{w \in M|(L_0 - r_i - n)^{d_i}w = 0 \}\), \(M_{(r_i)} \neq 0\) and \(\dim_{\mathbb{C}}M_{(r_i + m)} < \infty\). Conversely, every finitely generated weak \(V\)-module is in \(\Rep(V)\).
\end{proposition}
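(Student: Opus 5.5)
The converse direction is immediate from \cref{thmWisA}. A nonzero finitely generated weak \(V\)-module \(M\) decomposes as \(M = \bigoplus_{i=1}^k\bigoplus_{m\geq 0}M_{(r_i+m)}\) with finitely many \(r_i\) and finite-dimensional generalized eigenspaces. Hence for every \(r \in \mathbb{R}\) only finitely many pairs \((i,m)\) satisfy \(\Re(r_i)+m \leq r\), so the sum \(\sum_{\Re\lambda\leq r}\dim M_{(\lambda)}\) is finite. Thus \(M\) is quasi-finite-dimensional and lies in \(\Rep(V)\).

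For the forward direction, the plan is to reduce everything to finite generation and then apply \cref{thmWisA} again. Let \(M\) be a nonzero quasi-finite-dimensional module. By \cite{Hua}, whose hypotheses hold since \(V\) is \(C_1\)-cofinite by \cref{lemC2C1}, \(\Rep(V)\) is a finite abelian category: every object has finite length, and there are only finitely many simple objects up to isomorphism (compare \cref{propFMSM}). I would invoke finite length directly. Choose a composition series \(0 = M_0 \subset M_1 \subset \cdots \subset M_\ell = M\) with simple quotients \(M_j/M_{j-1}\). Each simple module is generated by a single vector. Lifting one generator of each \(M_j/M_{j-1}\) to \(M_j\) gives a finite set, and an induction on \(j\) shows it generates \(M\): the submodule generated by the lifts up to \(j\) contains \(M_{j-1}\) by induction and surjects onto \(M_j/M_{j-1}\), hence equals \(M_j\). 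So \(M\) is finitely generated, and \cref{thmWisA} supplies the \(r_i\), \(d_i\) and the stated decomposition with \(M_{(r_i)}\neq 0\) and finite-dimensional components.

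The main obstacle is justifying finite length of quasi-finite-dimensional modules without simply quoting \cite{Hua}. If a self-contained argument is needed, I would proceed in three steps. First, the conformal weights form a finite set \(\Lambda\). Second, since \(\Re\) of the \(L_0\)-spectrum is bounded below by quasi-finiteness, every nonzero submodule has a lowest-weight vector on which the positive modes act trivially. That vector therefore generates a submodule with nonzero top piece, which is a module for \(A_0(V)\). Third, the length of \(M\) is bounded by \(\sum_{\Re\lambda\leq R}\dim M_{(\lambda)}\) for a suitable \(R\). Here \(R\) exceeds \(\max_{r\in\Lambda}\Re(r)\), because each composition factor contributes a nonzero lowest weight space of weight in \(\Lambda\). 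This last bound is the delicate point, since composition factors of an infinite chain must be shown to have tops that survive in \(M\). I would rely on the Huang result for it rather than reprove it.
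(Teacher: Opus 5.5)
Your proposal is correct and follows essentially the paper's route: you take finite length of quasi-finite-dimensional modules from Huang, deduce finite generation, then apply \cref{thmWisA} for the decomposition, and you use \cref{thmWisA} again for the converse. The only difference is that you prove ``finite length implies finitely generated'' yourself by lifting one generator from each composition factor, where the paper simply cites \cite[Proposition 1.7]{Hua}, and you spell out the converse that the paper calls clear.
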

\begin{proof}
    From \cref{thmWisA} we have that any finitely generated weak \(V\)-module is of the desired form. Recall that \(V\) is \(C_1\)-cofinite, there are finitely many conformal weights, and the Zhu algebras are finite-dimensional by \cref{propFMSM}. Now, using \cite[Proposition 4.3]{Hua}, the quasi-finite-dimensional \(V\)-modules are exactly the logarithmic modules of finite length and by \cite[Proposition 1.7]{Hua}, the logarithmic \(V\)-modules of finite length are finitely generated. Thus, the modules \(M \in \Rep(V)\) have the desired shape and we see that \(\sum_{\Re(\lambda) \leq r} \dim_{\mathbb{C}}M_{(\lambda)} < \infty\) is equivalent to \(\dim_{\mathbb{C}}M_{(r_i + m)} < \infty\). The converse is clear. 
\end{proof}

\begin{example}
    For any finitely generated \(A_n(V)\)-module \(X\), the admissible modules \(L_n(X)\) and \(\overline{M}_n(X)\) are in \(\Rep(V)\), since they are generated from \(L_n(X)(n) \cong X\) and \(\overline{M}_n(X)(n) \cong X\), respectively.
\end{example} 
The category \(\Rep(V)\) comes with an abelian structure.

\begin{theorem} [{\cite[Theorem 3.24, Proposition 4.3]{Hua}}]
\label{thmFA}
The category \(\Rep(V)\) is a \(\mathbb{C}\)-linear finite abelian category.
\end{theorem}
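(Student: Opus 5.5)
The claim is that \(\Rep(V)\) is \(\C\)-linear and finite abelian. Recall what this requires: \(\Rep(V)\) is abelian with finite-dimensional hom-spaces, every object has finite length, there are finitely many simple objects up to isomorphism, and there are enough projectives. I would verify these in order, using \cref{propQuasi} to replace quasi-finite-dimensional modules by finitely generated logarithmic modules (equivalently, logarithmic modules of finite length).

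\emph{Abelian structure.} Kernels and cokernels of \(V\)-module maps, computed in weak \(V\)-modules, are again weak modules, and they inherit the generalized \(L_0\)-eigenspace grading because morphisms commute with \(L_0\). For kernels the finiteness condition holds because submodules of quasi-finite-dimensional modules are quasi-finite-dimensional. For cokernels it holds because quotients of finitely generated modules are finitely generated, so the converse part of \cref{propQuasi} places them in \(\Rep(V)\). Direct sums are clear, so \(\Rep(V)\) is an abelian subcategory of weak modules. Hom-spaces are finite-dimensional: a morphism is determined by its values on finitely many generators, and it is grade-preserving into finite-dimensional generalized eigenspaces, as noted before \cref{propP}.

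\emph{Finite length and finitely many simples.} A strictly increasing chain of submodules changes the dimension of some \(M_{(r_i+m)}\) at each step. Each such submodule is generated by its components in finitely many low degrees, and those degrees are bounded uniformly via \(\Omega_n\) and \cref{propOmega}. From this, finite length follows; alternatively I would simply cite Huang's result that quasi-finite-dimensional modules have finite length, as already used in \cref{propQuasi}. Finiteness of the set of simples comes from \cref{propFMSM}(ii) together with the observation that every simple in \(\Rep(V)\) is a simple weak module.

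\emph{Enough projectives.} This is the main obstacle. My plan is to use the universal modules \(\overline{M}_n(X)\) of \cref{thmMUniv} with \(X = A_n(V)\), or suitable truncated quotients of them. By the universal property, \(\Hom_V(\overline{M}_n(A_n(V)), M) \cong \Omega_n(M)\), and the difficulty is showing that \(M \mapsto \Omega_n(M)\) is exact on \(\Rep(V)\). I would handle this as follows. Fix a bound \(N\) on the lengths of objects under consideration, and restrict to a single coset \(r_i+\mathbb{Z}\). Then choose a quotient of \(\overline{M}_n(A_n(V))\) in which \(L_0\) satisfies a polynomial relation of degree depending on \(N\); the quotient is defined by a functor of the form ``generalized eigenspace of \(L_0\) of degree \(n\)'', which is exact. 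Projectivity relative to the objects of length at most \(N\) then follows. Finally, I would show that for each simple \(S\) these quotients stabilize as \(N\) grows, using \(C_1\)-cofiniteness (\cref{lemC2C1}), yielding a genuine projective cover of \(S\).

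This stabilization is exactly Huang's construction. Realistically I would follow \cite{Hua} for this step and cite it, checking only that its hypotheses (\(C_1\)-cofiniteness, finitely many conformal weights, and finite-dimensional Zhu algebras) hold under \cref{conv}. This parallels the verification already carried out in \cref{propQuasi}.
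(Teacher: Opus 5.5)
Your proposal takes the same route as the paper, which gives no independent argument for this statement and simply cites Huang's Theorem 3.24 and Proposition 4.3 after checking his hypotheses ($C_1$-cofiniteness via \cref{lemC2C1}, finitely many conformal weights, finite-dimensional Zhu algebras via \cref{propFMSM}) as in \cref{propQuasi}; your direct checks of the abelian structure, finite-dimensional hom-spaces and finitely many simples are correct additions. The one thing to discard is your heuristic for enough projectives, which is not Huang's construction and fails as stated: a polynomial relation on $L_0$ only identifies $\Hom_V$ out of your quotient with an $L_0$-stable piece of $\Omega_n(M)$, and $\Omega_n$ is only left exact, whereas Huang (and \cref{lemHua} here, via \cref{lemWt}) choose the weight $\lambda$ and $n$ so that $M_{(\lambda)} \subset \Omega_n(M)$ for every $M \in \Rep(V)$ and take $X$ a homogeneous projective $A_n(V)$-module of weight $\lambda$ (e.g.\ a summand of $A_n(V)_{(\lambda)}$), so that $\Hom_V(\overline{M}_n(X), M) \cong \Hom_{A_n(V)}(X, M_{(\lambda)})$ is exact with no length truncation or stabilization needed.
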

We follow \cite[Section 5]{GR}. Because of \cref{thmFA}, the category \(\Rep(V)\) has a projective generator \(G\). Writing \(E(G) := \End_V(G)\) or even shorter \(E = E(G)\) if the context is clear, the module \(G\) is a natural left \(E(G)\)-module, turning \(G\) into a \((V, E(G)^{\mathrm{op}})\)-bimodule by definition of the algebra \(E(G)\). For every projective generator \(G\), there is a functor
\begin{align*}
    \mathcal{F}_G\colon \Rep(V) \rightarrow \rmod E(G), \, \, \text{ where } \, \, \mathcal{F}_G = \Hom_V(G, -),
\end{align*}
with the right \(E(G)\)-module action on \(\mathcal{F}_G(M)\) for \(M \in \Rep(V)\) given by precomposition. This functor is part of an adjoint equivalence with pseudo-inverse given by the assignment
\begin{align*}
    \mathcal{H}_G\colon \rmod E(G) \rightarrow \Rep(V), \, \,\,\,\, \mathcal{H}_G = - \otimes_{E} G.
\end{align*}
For a right \(E\)-module \(W\), \(W \otimes_{E} G\) becomes a \(V\)-module by first equipping \(W \otimes_{\C}G\) with the \(V\)-action \(v_{(n)}(w \otimes g) = w \otimes v_{(n)}g\) and then showing that the action descends to the quotient \(W \otimes_{E} G\) by invoking the abelian structure. The projection map \(\pi_{\otimes} \colon W \otimes_{\C} G \rightarrow W \otimes_{E} G\) becomes a \(V\)-module map.

From \cite[Proposition 5.2, Remark 5.4]{GR}, we also record that \(G\) is a projective generator in the category \(E \text{-}\mathsf{Mod}\) of (not necessarily finite) left \(E\)-modules.

The reason for discussing the existence and properties of a projective generator is that we want to express pseudo-trace functions on possibly various different modules \(M \in \Rep(V)\) as pseudo-trace functions on a common module (the projective generator). The following proposition is from \cite[Remark 5.6]{GR}. Complementing the arguments contained in \cite[Proposition 5.5, Remark 5.6]{GR}, we give a full proof of the proposition.

\begin{proposition} [{\cite[Remark 5.6]{GR}}]
\label{propPGPT}
Let \(M \in \Rep(V)\). Let \(P \subseteq \End_V(M)\) be a subalgebra such that \(M\) is projective over \(P\) and \(\psi \in \mathcal{S}(P)\). Then, there is a \(\phi \in \mathcal{S}(E)\) such that \(\xi_G^{\phi, E} = \xi_M^{\psi, P}\). 
\end{proposition}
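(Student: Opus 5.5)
Since $M$ is projective over $P$ and finitely generated, I will choose a finite-dimensional vector space $X$ with a surjection $\pi\colon P\otimes_{\C}X\to M$ of left $P$-modules, together with a splitting $\iota\colon M\to P\otimes_{\C}X$, $\pi\circ\iota=\id_M$, as in \cref{lemPTCM}. Because $G$ is a projective generator of $\Rep(V)$, there are, for some $N$, $V$-module maps $p\colon G^{\oplus N}\to M$ (surjective) and, by projectivity of $G^{\oplus N}$, a lift relating $M$ to $G^{\oplus N}$. The cleanest route is to use the equivalence $\mathcal{F}_G$. I would reduce the general $(M,P)$ to a statement about $G$ in two stages.

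\textbf{Stage 1: pass from $P$ to $\End_V(M)$ on the module $P\otimes M$.} Rather than comparing $P$ with $\End_V(M)$ directly, I will express the pseudo-trace on $M$ through the diagram $\Psi_f$ of \cref{lemPTCM}. For each graded piece, $\tr^{\psi}_{M_{(\lambda)}}(f)=\psi\big((\id_P\otimes\widetilde{\mathrm{ev}}_X)(\iota f\pi\otimes\id)(1\otimes\mathrm{coev}_X)\big)$ with $f=o(v)q^{L_0-c_V/24}|_{M_{(\lambda)}}$. Here $\iota,\pi$ are only $P$-linear, not $V$-linear. To fix this, I would replace the $P$-coordinate system by data internal to $\Rep(V)$. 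Concretely, set $\tilde f:=\iota f\pi\in\End_P(P\otimes X)$ and note that the functional $f\mapsto\tr^{\psi}_M(f)$ on $\End_V(M)$ is, by cyclicity \cite[Proposition 4.2]{AN} and the formula above, determined by a linear functional on the algebra $P$ through the map $\End_V(M)\to P$, $f\mapsto\sum_i\alpha_i(f(u_i))$, which lands in $P$ and factors through $P/[P,P]$ up to the choice.

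\textbf{Stage 2: transport to $G$.} Using $\mathcal{F}_G$, I will write $M\cong\mathcal{F}_G(M)\otimes_E G$, with $\mathcal{F}_G(M)=\Hom_V(G,M)$ a finite-dimensional right $E$-module. Choose a surjection $\sigma\colon Y\otimes_{\C}G\to M$ of $V$-modules for a finite-dimensional space $Y$ (this is $E$-freeness: $Y\otimes E\to\mathcal{F}_G(M)$ surjective), and a $V$-linear section $s\colon M\to Y\otimes G$, which exists since $Y\otimes G$ is projective and $\sigma$ is onto a module that need not be projective. A section exists only if $M$ is projective, so in general I would instead use the $E$-side. Then I would define $\phi\in\mathcal{S}(E)$ by
\[
\phi(e)=\tr^{\psi}_M\Big(\sum_{y}\sigma\circ(y\otimes e)\circ \cdots\Big),
\]
i.e. $\phi(e):=\psi$-pseudo-trace of the $P$-linear, $V$-linear composite $M\xrightarrow{}$ built from $\mathcal{F}_G$. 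More precisely, for $e\in E$ I will use maps $a_j\in\Hom_V(G,M)$, $b_j\in\Hom_V(M,G)$ to be constructed, and set $\phi(e)=\sum_j\tr^{\psi}_M(a_j e b_j)$. Symmetry of $\phi$ and the identity $\xi^{\phi,E}_G=\xi^{\psi,P}_M$ should both follow from cyclicity, with $o(v)q^{L_0}$ commuting with all $V$-maps, provided $\sum_j b_j a_j$ acts appropriately.

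\textbf{Main obstacle.} The hard step is producing $V$-linear $a_j,b_j$ when $M$ is not projective in $\Rep(V)$. Since $M$ is only projective over $P$, the natural maps are $P$-linear rather than $V$-linear. What I would try first follows \cite[Proposition 5.5]{GR}. Use that $M\cong \mathcal{F}_G(M)\otimes_E G$ and that $G$ is projective as a left $E$-module, so that $M$ becomes projective over $P\otimes E^{\mathrm{op}}$-type data. Then rewrite the $P$-pseudo-trace on $M$, via a $P$-coordinate system on the finite-dimensional $(P,E)$-bimodule $\mathcal{F}_G(M)$, as an $E$-pseudo-trace on $G$. The resulting $\phi$ is $e\mapsto\psi\big(\sum_i\alpha_i(u_i\cdot e)\big)$ with $\{u_i,\alpha_i\}$ a $P$-coordinate system of $\mathcal{F}_G(M)$. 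The verification that this equals $\xi^{\psi,P}_M$ is the graphical manipulation of $\Psi_f$: the $X$-loop is slid through $-\otimes_E G$. This is the step I would check most carefully, grade by grade.
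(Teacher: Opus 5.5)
Your final route, in the paragraph headed ``Main obstacle'', is the paper's route: $\phi(e)=\psi\bigl(\sum_i\alpha_i(u_i\circ e)\bigr)$, for a $P$-coordinate system $\{u_i,\alpha_i\}$ of $\mathcal{F}_G(M)=\Hom_V(G,M)$, is exactly how the paper defines $\phi$. However, the step that makes this definition meaningful is missing. The hypothesis is that $M$ is projective over $P$. You simply assume that a $P$-coordinate system on $\mathcal{F}_G(M)$ exists, i.e.\ that $\mathcal{F}_G(M)$ is $P$-projective. The reason you gesture at, that $G$ is projective as a left $E$-module, goes the wrong way: it would deduce $P$-projectivity of $M\cong\mathcal{F}_G(M)\otimes_E G$ from that of $\mathcal{F}_G(M)$, not the converse. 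What the paper uses is that $G$ is a \emph{generator} of $E\text{-}\mathsf{Mod}$. An epimorphism $G^{\oplus m}\to E$ splits because ${}_EE$ is projective, so $E$ is a direct summand of $G^{\oplus m}$. Now apply the additive functor $\mathcal{F}_G(M)\otimes_E-$, which takes values in left $P$-modules ($P$ acting by postcomposition), together with the $P$-linear natural isomorphism $\mathcal{F}_G(M)\otimes_E G\cong M$. This exhibits $\mathcal{F}_G(M)$ as a $P$-direct summand of $M^{\oplus m}$, hence projective. Without this step, $\phi$ is not defined.

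The two remaining verifications are only announced, not carried out.

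\emph{Symmetry.} This is immediate once you write $\phi(e)=\tr^{\psi}_{\mathcal{F}_G(M)}(r_e)$ with $r_e(f)=f\circ e$. Then $r_{ee'}=r_{e'}\circ r_e$, and cyclicity of pseudo-traces gives $\phi(ee')=\phi(e'e)$. The paper does this by hand via \eqref{eqHelp}.

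\emph{The identity.} For $\xi^{\phi,E}_G=\xi^{\psi,P}_M$, ``sliding the loop through $-\otimes_E G$'' needs an actual $P$-linear splitting for $M$. The paper assembles one from $\iota'$ and a section $\bar p$ of $\pi_\otimes$, built from an $E$-splitting of $G$. It then uses that $\pi_\otimes$ is $V$-linear, together with cyclicity of the ordinary trace. A shorter way to close it is as follows. Let $\{g_k,\beta_k\}$ be an $E$-coordinate system of $G_{(\lambda)}$. Then $\{u_i(g_k),\gamma_{ik}\}$, with $\gamma_{ik}(f\otimes g)=\alpha_i(f\circ\beta_k(g))$, is a $P$-coordinate system of $M_{(\lambda)}\cong\mathcal{F}_G(M)\otimes_E G_{(\lambda)}$. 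Set $\mathcal{O}=o(v)q^{L_0-c_V/24}$. Since each $u_i$ is $V$-linear, both pseudo-traces of $\mathcal{O}$ equal $\sum_{i,k}\psi\bigl(\alpha_i(u_i\circ\beta_k(\mathcal{O}g_k))\bigr)$.

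Finally, discard Stages 1 and 2. Apart from the section problem you noticed, $a_jeb_j$ need not commute with $P$, so $\tr^{\psi}_M(a_jeb_j)$ is not even defined.
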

\begin{proof}
To define the pseudo-trace map \(\tr_G^{\phi}\), we give the maps \(\pi\colon E \otimes_{\mathbb{C}} G \rightarrow G, \pi(e \otimes g) = e(g)\) and \( \iota\colon G \rightarrow E \otimes_{\mathbb{C}} G\) such that \(\pi \circ \iota = \id_G\) (cf. \cref{lemPTCM}).\footnote{Note that \(G\) is certainly not a finite-dimensional vector space as demanded in \cref{lemPTCM}. However, as we are dealing with grade-preserving maps, for every generalized eigenspace component \(G_{(r+m)}\), we can simply restrict \(\pi\) to a map \(E \otimes_{\C} G_{(r+m)} \rightarrow G_{(r+m)}\), choose a section, and assemble this back into the map \(\iota\). Here and in the following, we tacitly assume that such an argument has been carried out whenever we mention possibly infinite-dimensional modules. This also applies to the graphical calculus, which we implicitly treat as a componentwise argument.} Consider the \((P, E)\)-bimodule \(\mathcal{F}(M) = \mathcal{F}_G(M)\). Writing \(r\colon \mathcal{F}(M) \otimes_{\mathbb{C}} E \rightarrow \mathcal{F}(M), r(f \otimes e) = f \circ e\) for the natural right action, we can define a map 
\begin{align*}
    p\colon \mathcal{F}(M) \otimes_{\mathbb{C}} G \rightarrow \mathcal{F}(M) \otimes_{\mathbb{C}} G, \, \, \, \, p = (r \otimes \id_G) \circ (\id_{\mathcal{F}(M)} \otimes \iota).
\end{align*}
This map satisfies \(p \circ (\id_{\mathcal{F}(M)} \otimes \pi - r \otimes \id_G) = 0\), from which it follows that it factors over \(\mathcal{F}(M) \otimes_{E} G\). In other words, there is a morphism \(\Bar{p}\colon \mathcal{F}(M) \otimes_E G \rightarrow \mathcal{F}(M) \otimes_{\mathbb{C}} G\) such that \(p = \Bar{p} \circ \pi_{\otimes}\). Since \(\pi_{\otimes} \circ p = \pi_{\otimes}\), this implies that \(\pi_{\otimes} \circ \Bar{p} = \id_{\mathcal{F}(M) \otimes_E G}\).

We now show that \(\mathcal{F}(M)\) is projective as a left \(P\)-module. Since \(G\) generates \(E\text{-}\mathsf{Mod}\), and \(E\) is a finite-dimensional \(E\)-module, there is a surjective \(E\)-morphism \(h \colon G^m \rightarrow E\), which induces an isomorphism \(\Bar{h}\colon G^m/\ker(h) \xrightarrow{\sim} E\). Since \(_{E}E\) is projective, \(\Bar{h}\) splits and \(G^m \cong E \oplus \ker(h)\). Because the \(V\)-isomorphism \(M \cong \mathcal{F}(M) \otimes_{E} G\) is natural in \(M\), in particular with respect to endomorphisms in \(P \subset \End_V(M)\), it is also a \(P\)-isomorphism. This, combined with the fact that \(\mathcal{F}(M) \otimes_{E} -\) defines an additive functor \(E\text{-}\mathsf{Mod}  \rightarrow P \text{-}\mathsf{Mod}\), implies
\begin{align*}
    M^m \cong (\mathcal{F}(M) \otimes_{E} G)^m \cong \mathcal{F}(M) \otimes_{E} G^m \cong \mathcal{F}(M) \oplus \mathcal{F}(M) \otimes_{E} \ker(h)
\end{align*}
as \(P\)-modules, which gives due to projectivity of \(M\) the claim. 

As a consequence, the \(P\)-module map \(\pi'\colon P \otimes_{\mathbb{C}} \mathcal{F}(M) \rightarrow \mathcal{F}(M)\) defined via the left \(P\)-action has a right inverse, that is \(\iota'\colon \mathcal{F}(M) \rightarrow  P \otimes_{\mathbb{C}} \mathcal{F}(M)\) such that \(\pi' \circ \iota' = \id_{\mathcal{F}(M)}\). With this, we define the linear map \(\phi\colon E \rightarrow \C\) by setting for every \(V\)-endomorphism \(e \in {E}\) 
\begin{align*}
    \phi(e) = \tr_{\mathcal{F}(M)}((\psi \otimes \id_{\mathcal{F}(M)}) \circ \iota' \circ r(- \otimes e)) \, \, = \, \, \adjincludegraphics[valign=c, scale = 0.9]{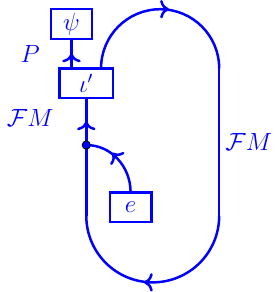},
\end{align*}
where we have denoted the right \(E\)-action given by the map \(r \colon \mathcal{F}(M) \otimes_{\C} E \rightarrow \mathcal{F}(M)\) by a dot on the strand labeled with \(\mathcal{F}(M)\). We have to show that \(\phi\) is a symmetric function on \(E\). Choose a \(P\)-coordinate system \( \{x_i, \alpha_i\}_{i = 1}^m\) of \(\mathcal{F}(M)\) compatible with \(\pi'\) and \(\iota'\). This means that the \(x_i\) form a basis of \(\mathcal{F}(M)\), and \(\alpha_i \in \Hom_P(\mathcal{F}(M), P)\) are such that \(\iota'(f) = \sum_{i = 1}^m\alpha_i(f) \otimes x_i\) for any \(f \in \mathcal{F}(M)\), see \cref{subsecPseudo} (here, \(u_j = x_j\) because of the definition of \(\pi'\)). 

Let now \(e, e' \in E\) and note that we can express \(f \circ e = \sum_{j = 1}^m\alpha_j(f \circ e) \circ x_j\). Setting \(f = x_i\), precomposing with \(e'\), applying \(\alpha_i\) and using \(P\)-linearity, we obtain the identity
\begin{align}
    \label{eqHelp}
    \alpha_i(x_i \circ e \circ e') = \sum_{j = 1}^m \alpha_j(x_i \circ e) \circ \alpha_i(x_j \circ e').
\end{align}
That \(\phi \in \mathcal{S}(E)\) now follows from an explicit computation of the trace, using the identity from \cref{eqHelp}. Denoting by \(x_i^* \in \mathcal{F}(M)^*\) the elements of the dual basis, we have
\begin{align*}
    \phi(e e') &= \tr_{\mathcal{F}(M)}((\psi \otimes \id_{\mathcal{F}(M)}) \circ \iota' \circ r(- \otimes e \circ e')) = \sum_{i,j = 1}^mx_j^*\left(\psi(\alpha_i(x_j \circ e \circ e'))x_i \right) \\
    &= \sum_{i = 1}^m \psi(\alpha_i(x_i \circ e \circ e')) \overset{(\text{\cref{eqHelp}})}{=} \sum_{i, j = 1}^m\psi(\alpha_j(x_i \circ e) \circ \alpha_i(x_j \circ e')).
\end{align*}
One obtains the same expression with \(e\) and \(e'\) interchanged for \(\phi(e' e)\). Using symmetry of \(\psi \in \mathcal{S}(P)\) and relabeling the summation indices, we see that indeed, \(\phi(ee') = \phi(e'e)\).

Moreover, from \(\pi'\) and \(\iota'\) we can define \(P\)-module maps
\begin{align*}
    \Tilde{\iota}\colon \mathcal{F}(M) \otimes_{E} G \xrightarrow{\Bar{p}} \mathcal{F}(M) \otimes_{\mathbb{C}} G \xrightarrow{\iota' \otimes \, \id_G} P \otimes_{\mathbb{C}} \mathcal{F}(M) \otimes_{\mathbb{C}} G \xrightarrow{\id_P \otimes \, \pi_{\otimes}}  P \otimes_{\mathbb{C}} \mathcal{F}(M) \otimes_{E} G \\
     \Tilde{\pi}\colon P \otimes_{\mathbb{C}} \mathcal{F}(M) \otimes_{E} G \xrightarrow{\id_P \otimes \, \Bar{p}}  P \otimes_{\mathbb{C}} \mathcal{F}(M) \otimes_{\mathbb{C}} G \xrightarrow{\pi' \otimes \, \id_G} \mathcal{F}(M) \otimes_{\mathbb{C}} G \xrightarrow{\pi_{\otimes}} \mathcal{F}(M) \otimes_{E} G,
\end{align*}
which give due to the isomorphism \(M \cong \mathcal{F}(M) \otimes_{E} G\) maps \(\Tilde{\pi}\colon P \otimes_{\mathbb{C}} M \rightarrow M, \Tilde{\iota}\colon M  \rightarrow P \otimes_{\mathbb{C}} M\), which we denote by abuse of notation by the same letter and which satisfy \(\Tilde{\pi} \circ \Tilde{\iota} = \id_M\).
We can express the pseudo-trace \(\tr_M^{\psi}\) with respect to these maps. We are now set for proving that \(\xi_M^{\psi, P} = \xi_G^{\phi, {E}}\), which we do using graphical calculus. For this, abbreviate \(\mathcal{O}^G = o(v)q^{L_0 - c_V/24} \in \End_E(G)\). Then,
\begin{align*}
    \xi_G^{\phi, E}(v, \tau) \, \, = \, \, \adjincludegraphics[valign=c, scale = 0.9]{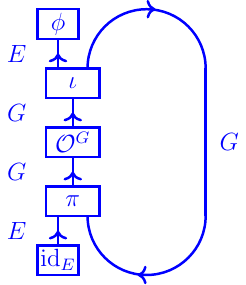} \,\, =  \adjincludegraphics[valign=c, scale = 0.9]{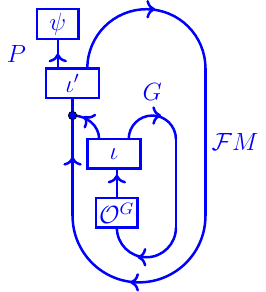} = \adjincludegraphics[valign=c, scale = 0.9]{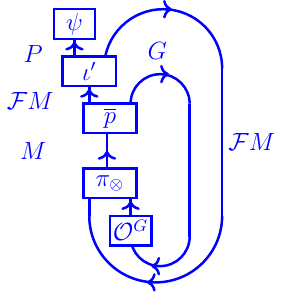}.
\end{align*}
In the first step, we substituted the definition of \(\phi\). In the second step, we used the definition of \(p\), the fact that \(p = \Bar{p} \circ \pi_{\otimes}\) and the identification \(M = \mathcal{F}(M) \otimes_E G\) as \(P\)-modules. We compute further 
\begin{align*}
    \adjincludegraphics[valign=c, scale = 0.9]{figures/step2.pdf} \, \, =  \, \, \adjincludegraphics[valign=c, scale = 0.9]{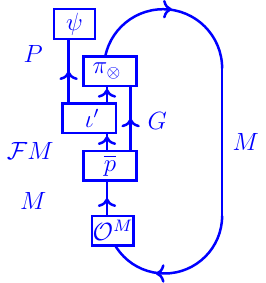} \, \, = \, \, \adjincludegraphics[valign=c, scale = 0.9]{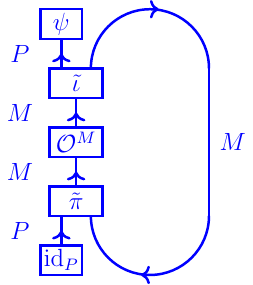} \, \, = \, \, \xi_M^{\psi, P}(v, \tau)
\end{align*}
where in the first step we used that \(\pi_{\otimes} \in \Hom_V(\mathcal{F}(M) \otimes_{\mathbb{C}} G, M)\) and applied the cyclicity of the trace map. Here, we used again the shorthand \(\mathcal{O}^M = o(v)q^{L_0 - c_V/24} \in \End_P(M)\). In the last step, we simply identified the composition of \(\pi_{\otimes}, \iota' \) and \(\Bar{p}\) as the map \(\Tilde{\iota}\). This identity holds independently of the choice of \(v \in V, \tau \in \mathbb{H}\) and thus concludes the proof. \qedhere
\end{proof}

Our toolkit is now complete to state and prove the main result of this section, proving a part of \cite[Conjecture 5.8]{GR}.
\begin{proposition}
\label{propSC1iso}
The linear map defined by \(\Phi_G\colon \mathcal{S}(E(G)) \rightarrow \mathcal{C}_1(V), \phi \mapsto \xi_G^{\phi, E}\) is an epimorphism.
\end{proposition}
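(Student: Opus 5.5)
The plan is to combine \cref{thmPmain} with \cref{propPGPT}. Two things need checking: that \(\Phi_G\) is a well-defined linear map into \(\mathcal{C}_1(V)\), and that its image contains a spanning set of \(\mathcal{C}_1(V)\).

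\emph{Well-definedness and linearity.} The projective generator \(G\) lies in \(\Rep(V)\), so by \cref{propQuasi} it is a finitely generated logarithmic module. The subalgebra \(P = E\) is all of \(\End_V(G)\), and \(G\) is projective over \(E\) by the remark recorded from \cite{GR}: \(G\) is a projective generator of \(E\text{-}\mathsf{Mod}\). The Arike--Nagatomo theorem \cite[Theorem 4.1]{AN} then gives \(\xi_G^{\phi, E} \in \mathcal{C}_1(V)\) for every \(\phi \in \mathcal{S}(E)\). Linearity in \(\phi\) is immediate from \cref{defPseudo}: for a fixed \(E\)-coordinate system on each component \(G_{(r+m)}\), the value \(\tr^{\phi}_{G_{(r+m)}}(f)\) depends linearly on \(\phi\). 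Summing over \(m\) and over the finitely many lowest weights preserves this, and the sums converge as one-point functions by \cite{AN}.

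\emph{Surjectivity.} Fix \(n > n_a\). By \cref{thmPmain} there are modules \(L_n(T_i)\), subalgebras \(P_i \subseteq \End_V L_n(T_i)\) over which \(L_n(T_i)\) is projective, and symmetric functions \(\bar\phi_i \in \mathcal{S}(P_i)\) such that the functions \(\xi_{L_n(T_i)}^{\bar\phi_i, P_i}\) span \(\mathcal{C}_1(V)\). Each \(T_i\) is a finitely generated \(A_n(V)\)-module, being a quotient of \(A_n(V)\) times an idempotent, so by the Example following \cref{propQuasi} we have \(L_n(T_i) \in \Rep(V)\). We may therefore apply \cref{propPGPT} with \(M = L_n(T_i)\), \(P = P_i\) and \(\psi = \bar\phi_i\). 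This yields \(\phi_i' \in \mathcal{S}(E)\) with
\[
\Phi_G(\phi_i') = \xi_G^{\phi_i', E} = \xi_{L_n(T_i)}^{\bar\phi_i, P_i}.
\]
Hence the image of \(\Phi_G\) contains a spanning set of \(\mathcal{C}_1(V)\). Since the image is a linear subspace, \(\Phi_G\) is surjective.

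\emph{Main obstacle.} The substantive work is already done in \cref{propPGPT} and \cref{thmPmain}, so the only delicate point is matching hypotheses. \cref{propPGPT} requires \(M \in \Rep(V)\) with \(M\) projective over \(P\); \cref{thmPmain} supplies projectivity, and finite generation of \(L_n(T_i)\) gives membership in \(\Rep(V)\). A further point to check is that a module with several lowest weights is handled correctly. I would verify that the pseudo-trace function is defined additively over the lowest-weight summands, and that the graphical argument in \cref{propPGPT} runs componentwise in each generalized eigenspace, as noted in the footnote there. Once this is confirmed, the identities used above hold uniformly in \(v \in V\) and \(\tau \in \mathbb{H}\), which is what the proof needs.
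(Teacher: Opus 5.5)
Your proposal is correct and follows essentially the same route as the paper: take the spanning set of pseudo-trace functions \(\xi_{L_n(T_i)}^{\bar\phi_i, P_i}\) from \cref{thmPmain} and use \cref{propPGPT} to realize each one as \(\xi_G^{\phi_i', E}\) for some \(\phi_i' \in \mathcal{S}(E)\). The extra checks you add are left implicit in the paper but are consistent with it: well-definedness and linearity of \(\Phi_G\) via \cite[Theorem 4.1]{AN} and the projectivity of \(G\) over \(E\), and membership \(L_n(T_i) \in \Rep(V)\).
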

\begin{proof}
We know from \cref{thmPmain} that there exist endomorphism algebras \(P_i\), symmetric functions \(\psi_i \in \mathcal{S}(P_i)\) and \((V, P_i)\)-bimodules \(L_n(T_i)\) with \(1 \leq i \leq k\) such that \(\mathcal{C}_1(V)\) is spanned by the pseudo-trace functions \(\xi_{L_n(T_i)}^{\psi_i, P_i}\). Using \cref{propPGPT}, for each \(\psi_i\) there is a \(\phi_i \in \mathcal{S}(E(G))\) such that \(\xi_G^{\phi_i, E} = \xi_{L_n(T_i)}^{\psi_i, P_i}\), so the linear map \(\Phi_G\) is surjective.
\end{proof}
\noindent It follows from \cref{propSC1iso} that \(\dim_{\mathbb{C}}\mathcal{C}_1(V) \leq \dim_{\mathbb{C}}S(E(G)) = \dim_{\mathbb{C}}(E(G)/[E(G), E(G)])\). As 
\begin{align}
    \label{eqDim}
    \dim_{\mathbb{C}}\mathcal{C}_1(V) = \dim_{\C} A_n(V)/[A_n(V), A_n(V)] - \dim_{\C} A_{n-1}(V)/[A_{n-1}(V), A_{n-1}(V)],
\end{align}
where the integer \(n \in \mathbb{N}\) is chosen such that \(n > n_a\),
we would like to relate \(E(G)\) to \(A_n(V)\) to establish the inverse inequality. In a special case, we will prove this inequality in the next section.

\subsection{The Gainutdinov-Runkel conjecture for separated conformal weights}
In addition to \(V\) being \(C_2\)-cofinite
and with nontrivial Virasoro action on simples, we assume subsequently that \(V\) has \textit{separated conformal weights}, specified in the following definition.
\begin{definition}
    A vertex operator algebra \(V\) with set of conformal weights \(\Lambda\) has \textit{separated conformal weights} if for any \(r,s \in \Lambda\) distinct, \(r - s \notin \mathbb{Z}\).
\end{definition}
\begin{example}
    \label{exWts}
    A class of examples for nonrational VOAs with separated conformal weights can be obtained by taking an appropriate rational VOA and tensoring it with a nonsemisimple local algebra. More precisely, let \(V\) be a rational and \(C_2\)-cofinite
    VOA satisfying that for \(r \neq s\) two conformal weights, \(r - s \notin \mathbb{Z}\). Let \(B\) be a finite-dimensional commutative local nonsemisimple algebra. It has a unique maximal ideal \(\mathfrak{m}\) and therefore a unique simple module \(B/\mathfrak{m}\). We can form the tensor product VOA \(V \otimes V^B = \oplus_{m \in \mathbb{N}} V_m \otimes_{\C} B\), with conformal vector given by \(\omega \otimes 1_B\), where \(\omega\) is the conformal vector of \(V\), see \cite[Section 2.5]{FHL}. Let us check why this VOA satisfies the assumptions in \cref{propEmb}:
    \begin{itemize}
        \item Since \(Y_{V \otimes V^B}(v \otimes b, z) = Y_V(v, z) \otimes (b \triangleright -)\), one has \(C_2(V) \otimes B \subset C_2(V \otimes B)\) and therefore \(V/C_2(V) \otimes B\) surjects onto \((V \otimes B)/C_2(V \otimes B)\). Hence, \(V \otimes V^B\) is \(C_2\)-cofinite.
        \item Because of \(C_2\)-cofiniteness, by \cite[Corollary 5.10]{Miy}, the conformal weights of \(V \otimes V^B\) are rational numbers. Then, by \cite[Proposition 4.7.2, Theorem 4.7.4]{FHL}, the irreducible modules are exactly the modules \(M \otimes \widehat{B/\mathfrak{m}}\), where \(M\) is an irreducible \(V\)-module and we used notation from \cref{exTr}. Thus, the conformal weights of \(V \otimes V^B\) are exactly those of \(V\), and accordingly no two distinct conformal weights are separated by integers.
        \item Finally, note that for \(M\) an irreducible \(V\)-module, for example, the \(V \otimes V^B\)-module \(M \otimes \widehat{B}\) is indecomposable, but not irreducible. Therefore, \(\Rep(V \otimes V^B)\) is not semisimple.
    \end{itemize}
    A concrete example is given, for instance, by \(V \otimes V^B = L_c \otimes \C[\varepsilon]/(\varepsilon^2)\) for \(c = 1/2\), where \(L_c\) is the Ising model (see, for instance, \cite[Section 7.2]{MaT}) with the set of conformal weights \(\Lambda = \{0, 1/2, 1/16\}\). In this case, \(L_c\) is rational and \(C_2\)-cofinite (and also of CFT-type).
    Note also that the Virasoro algebra acts nontrivially on all simple modules.
\end{example}
To prove that in this setting, \(\Phi_G \colon \mathcal{S}(E) \rightarrow \calc_1(V)\) is injective, we establish a series of intermediate results. Let us note that, although the separated-weights assumption is quite restrictive, the mechanisms underlying these results are likely to generalize.
\begin{lemma}
    \label{lemWt}
    Let \(M \in \Rep(V)\) be a (logarithmic) \(V\)-module. If \(V\) has separated conformal weights and \(r \in \Lambda\) is a conformal weight so that \(M_{(r + n)} \neq 0\) for some \(n \in \mathbb{N}\), then \(M_{(r + n)} \subset \Omega_n(M)\). 
\end{lemma}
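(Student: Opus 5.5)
Fix a homogeneous element \(v(m) \in \mathfrak{g}(V)_{-k}\) with \(k > n\), acting on \(w \in M_{(r+n)}\). By definition of \(\Omega_n(M)\), it suffices to show \(v(m)w = 0\) for all such \(v, m, k\). The plan is a weight argument. Recall that \(v(m)\) acts as \(v_{(m)}\), and that the commutator identity \([L_0, v_{(m)}] = (\wt v - m - 1)v_{(m)}\) holds on every weak module. Since the degree of \(t^m \otimes v\) is \(\wt v - m - 1 = -k\), it follows that \(v_{(m)}\) maps the generalized eigenspace \(M_{(\lambda)}\) into \(M_{(\lambda - k)}\). Apply \((L_0 - \lambda + k)^d v_{(m)} = v_{(m)}(L_0 - \lambda)^d\) with \(d\) large. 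Hence \(v(m)w \in M_{(r + n - k)}\), where \(n - k < 0\).

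The key step is to show \(M_{(r + n - k)} = 0\) whenever \(n - k < 0\). By \cref{propQuasi}, we can write \(M = \bigoplus_{i} \bigoplus_{m \geq 0} M_{(r_i + m)}\), and the remark after \cref{thmWisA} tells us each \(r_i\) lies in \(\Lambda\). The assumption \(M_{(r+n)} \neq 0\) forces \(r + n = r_i + m\) for some \(i\) and some \(m \in \mathbb{N}\). Then \(r - r_i \in \mathbb{Z}\), and both numbers lie in \(\Lambda\), so separatedness gives \(r = r_i\). Any nonzero generalized eigenspace with eigenvalue congruent to \(r\) modulo \(\mathbb{Z}\) must likewise be of the form \(r_j + m'\) with \(r_j - r \in \mathbb{Z}\). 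By the same argument \(r_j = r\), and therefore the eigenvalue is \(r + m'\) with \(m' \geq 0\). Since \(r + n - k\) is congruent to \(r\) but strictly below \(r\) in this sense, we get \(M_{(r+n-k)} = 0\). Consequently \(v(m)w = 0\), and \(M_{(r+n)} \subseteq \Omega_n(M)\).

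The main obstacle is justifying that the lowest weights \(r_i\) appearing in the decomposition of \cref{propQuasi} are genuinely conformal weights, that is, elements of \(\Lambda\). The paper only asserts this after \cref{thmWisA}. If needed, I would argue it directly. A nonzero vector in \(M_{(r_i)}\) lies in \(\Omega_0(M)\), because by the weight argument above all negative-degree modes kill it, as \(r_i\) is minimal in its class. So \(M_{(r_i)}\) is a nonzero finite-dimensional \(A_0(V)\)-module, and \([\omega]\) acts on it with generalized eigenvalue \(r_i\). Any simple \(A_0(V)\)-subquotient \(W\) then yields a simple module \(L_0(W)\) by \cref{thmLn}, whose lowest weight is \(r_i\). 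Hence \(r_i \in \Lambda\). A minor subtlety to watch is that \(r_i\) need not equal \(r\) a priori; this is exactly what the congruence together with separatedness handles.
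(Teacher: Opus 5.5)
Your proposal is correct and takes essentially the same approach as the paper. In both, a mode of degree $-k<-n$ sends $M_{(r+n)}$ into the generalized eigenspace for $r+n-k$. That space vanishes because every nonzero generalized eigenvalue of $M$ has the form $s+i$ with $s\in\Lambda$ and $i\geq 0$, and separatedness forces $s=r$. Your extra argument that the lowest weights $r_i$ lie in $\Lambda$ (via $\Omega_0(M)$ and $L_0(W)$) fills in a point the paper only asserts, but it does not change the route.
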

\begin{proof}
    Let \(u \in V\) homogeneous so that \(\wt u - k - 1 < -n\) and let \(0 \neq w \in M_{(r + n)}\). If \(u_{(k)}w \neq 0\), then \(\wt(u_{(k)}w) = r + n + \wt u - k-1 < r\). If there was \(s \in \Lambda\) so that \(s + i = \wt(u_{(k)}w)\) for some \(i \geq 0\), then \(r - s \in \mathbb{Z}\), in contradiction to \(V\) having separated weights. Thus, \(u_{(k)}w = 0\), which implies by definition that \(w \in \Omega_n(M)\).
\end{proof}
We adapt a result of \cite{Hua}. For this, we call an \(A_n(V)\)-module \(X\) \textit{homogeneous of weight} \(\lambda \in \C\) if \(X\) is a generalized eigenspace with generalized eigenvalue \(\lambda\) for the operation of acting with \([\omega] \in A_n(V)\). Recall the integer \(n_b\) from \cref{subSecMiy}.
\begin{lemma}
    \label{lemHua}
    Let \(V\) be a VOA subject to the conditions from above, in particular the assumption of separated conformal weights. Then, the following hold:
    \begin{enumerate}[(i)]
        \item \label{claimI} For \(X\) be a homogeneous, projective and finite-dimensional \(A_n(V)\)-module of weight \(r + n\) for some \(r \in \Lambda\) and \(n \in \mathbb{N}\), the universal module \(\overline{M}_n(X)\) is projective in \(\Rep(V)\).
        \item \label{claimII} \looseness = -2 For \(n > n_b\), any simple \(V\)-module \(M\) of conformal weight \(r\) has a projective cover \( \overline{M}_n(X) \rightarrow M\), with \(X \subset A_n(V)_{(r + n)}\) a suitable homogeneous and projective ideal constructed in the proof.
    \end{enumerate}
\end{lemma}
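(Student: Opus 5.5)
For \labelcref{claimI}, I will verify projectivity through the lifting property: given an epimorphism $g \colon N \rightarrow N'$ in $\Rep(V)$ and a morphism $f \colon \overline{M}_n(X) \rightarrow N'$, I will produce a lift $\tilde f \colon \overline{M}_n(X) \to N$ with $g \circ \tilde f = f$.

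\begin{enumerate}[(a)]
\item By the universal property of \cref{thmMUniv}, $f$ corresponds to the $A_n(V)$-morphism $f|_X \colon X \rightarrow \Omega_n(N')$. Since $X$ is homogeneous of weight $r+n$ and $f$ is grade-preserving, its image lies in the generalized eigenspace $N'_{(r+n)}$. Here I use that $[\omega]$ acts as $L_0$ up to the fixed shift built into the grading, as in the argument leading to \cref{eqNP}.
\item Because $g$ is surjective and grade-preserving, its restriction $N_{(r+n)} \rightarrow N'_{(r+n)}$ is surjective. By \cref{lemWt} (this is where separated weights enter), both spaces lie in $\Omega_n(N)$ and $\Omega_n(N')$ respectively. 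They are $A_n(V)$-submodules, since zero modes preserve generalized $L_0$-eigenspaces.
\item Projectivity of $X$ over $A_n(V)$ then lifts $f|_X$ to an $A_n(V)$-map $X \rightarrow N_{(r+n)} \subset \Omega_n(N)$. By \cref{thmMUniv} this extends to $\tilde f \colon \overline{M}_n(X) \rightarrow N$.
\item Both $g \circ \tilde f$ and $f$ extend the same map on $X$, so uniqueness in \cref{thmMUniv} gives $g \circ \tilde f = f$.
\end{enumerate}

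Before this argument applies, I must check that $\overline{M}_n(X)$ lies in $\Rep(V)$. This holds by the example after \cref{propQuasi}, since $X$ is finite-dimensional.

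For \labelcref{claimII}, take $M$ simple of conformal weight $r$ and $n>n_b$. Then $M_{r+n} \neq 0$, and by \cref{propOmega} it is a simple $A_n(V)$-module, on which $[\omega]$ acts by the scalar $r+n$. I will take $X$ to be a projective cover of $M_{r+n}$ inside $A_n(V)$: choose a primitive idempotent $e$ with $A_n(V)e \twoheadrightarrow M_{r+n}$, and set $X = A_n(V)e$. This is a left ideal. Because $[\omega]$ is central, $e$ lies in the block $A_n(V)_{(r+n)}$, so $X \subset A_n(V)_{(r+n)}$ is homogeneous and projective.

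By \labelcref{claimI}, $\overline{M}_n(X)$ is projective. The map $X \rightarrow M_{r+n} \subset \Omega_n(M)$ extends by \cref{thmMUniv} to $\overline{M}_n(X) \rightarrow M$, which is nonzero and hence surjective since $M$ is simple.

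It remains to show this surjection is a projective cover, i.e.\ that $\overline{M}_n(X)$ is indecomposable, equivalently that $\End_V(\overline{M}_n(X))$ is local. The universal property gives
\[
\End_V(\overline{M}_n(X)) \cong \Hom_{A_n(V)}\bigl(X, \Omega_n(\overline{M}_n(X))\bigr).
\]
I expect the main obstacle to be showing that every such endomorphism maps $X$ into $\overline{M}_n(X)(n) = X$; once that holds, the right-hand side is $\End_{A_n(V)}(X) \cong eA_n(V)e$, which is local. To control this, I will use weights. An endomorphism preserves the generalized $L_0$-eigenvalue, so it maps $X$ into $\overline{M}_n(X)_{(r+n)}$.

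The remaining task is to show that the only component of that eigenvalue is degree $n$, i.e.\ that $\overline{M}_n(X)(m)$ with $m \neq n$ cannot have weight $r+n$. By separatedness, every lowest weight occurring in $\overline{M}_n(X)$ is congruent to $r$ modulo $\mathbb{Z}$. Combined with the grading $\overline{M}_n(X)(m)$ having weight $r+m$ (shifting from degree $n$), this gives the claim.

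If this weight argument turns out delicate, the fallback is to show directly that the top of $\overline{M}_n(X)$ is $M$. Any simple quotient $Q$ of $\overline{M}_n(X)$ corresponds to a nonzero map $X \rightarrow \Omega_n(Q)$. Since $X$ has simple top $M_{r+n}$ and separated weights force $Q_{r+n}$ to be the relevant degree-$n$ piece of $Q$, this yields $Q \cong M$ with multiplicity one.
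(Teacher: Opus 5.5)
Your proposal is correct. Part (i) is the paper's argument: Huang's Theorem 3.22, adapted via \cref{lemWt}.

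For (ii), you and the paper share the same setup: $X=A_n(V)e$ is an indecomposable summand of $A_n(V)_{(r+n)}$ covering $M_{r+n}$, and $p$ is its universal extension. You differ in how you prove that $p$ is a projective cover. The paper defers to Huang's Theorem 3.23, which starts from the fact that $\alpha\colon X\to M_{r+n}$ is already a projective cover over $A_n(V)$. For instance, if $K+\ker p=\overline{M}_n(X)$, then taking generalized $(r+n)$-eigenspaces gives $K_{(r+n)}+\ker\alpha=X$. Superfluity of $\ker\alpha$ then forces $K\supseteq X$, and $K$ is everything because $X$ generates.

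You instead show that $\End_V(\overline{M}_n(X))\cong\End_{A_n(V)}(X)\cong(eA_n(V)e)^{\mathrm{op}}$ is local. The $\mathrm{op}$ is missing in your write-up but does not affect locality. This is exactly the computation the paper performs later in the proof of \cref{propEmb} for $r=s$. You then use finiteness of $\Rep(V)$ (\cref{thmFA}): an indecomposable projective surjecting onto a simple object is its projective cover.

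Both routes rest on the same fact, $\overline{M}_n(X)(m)=\overline{M}_n(X)_{(r+m)}$ (Huang's Proposition 3.21). Your route produces the endomorphism algebra explicitly. Huang's is slightly more elementary, since it needs no Krull--Schmidt argument in the ambient category.

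Some minor points:
\begin{itemize}
\item The grading identity follows directly from $[L_0,v(i)]=(\wt v-i-1)v(i)$ together with $L_0|_X=[\omega]$. Separatedness plays no role there, so your remark about lowest weights being congruent to $r$ is superfluous, and the fallback argument is not needed.
\item In (i)(a), $[\omega]$ acts on $\Omega_n(\cdot)$ as $o(\omega)=L_0$ on the nose, with no shift; the proofs of \cref{propGrad} and \cref{propEmb} use it this way. This is what makes $f(X)\subset N'_{(r+n)}$ come out right.
\end{itemize}
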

\begin{proof}
    Since the results are minor modifications of the respective results in \cite{Hua}, we only present in detail the necessary adaptations. Note that we make some notational changes. Let us point out first that by \cite[Proposition 4.3]{Hua}, the category \(\Rep(V)\) and the category of finite-length generalized \(V\)-modules as defined in \cite{Hua} coincide.

    The proof of \labelcref{claimI} is similar to the proof of \cite[Theorem 3.22]{Hua}. Let \(M_1, M_2 \in \Rep(V)\) and \(f\colon \overline{M}_n(X) \rightarrow M_2\) and \(g \colon M_1 \rightarrow M_2\) be \(V\)-module maps so that \(g\) is surjective. To show projectivity, we need a map \(\tilde{f} \colon \overline{M}_n(X) \rightarrow M_1\) that satisfies \(g \circ \tilde{f} = f\). By \cref{lemWt}, we have \((M_i)_{(r+n)} \subset \Omega_n(M_i)\) as \(A_n(V)\)-submodules for \(i = 1, 2\). Using \(\overline{M}_n(X)(n) \cong X\), it follows that \(f(X) \subset (M_2)_{(r+n)}\) and \(g((M_1)_{(r+n)}) = (M_2)_{(r+n)}\). One can now argue as in the proof of \cite[Theorem 3.22]{Hua} and restrict the maps \(f\) and \(g\) to \(A_n(V)\)-module maps, then use projectivity of \(X\) to find an \(A_n(V)\)-module map \(X \rightarrow (M_1)_{(r + n)}\) and finally lift this to the desired map \(\tilde{f}\).

    We turn to the proof of \labelcref{claimII}. Let \(M\) be a simple \(V\)-module with conformal weight \(r\). As demonstrated in \cite[Lemma 3.5.1]{AN2}, for \(n > n_b\), the component \(M(n) = M_{r+n}\) is nonzero. By \cref{propOmega}, \(M(n)\) is a simple \(A_n(V)\)-module, and therefore \(r + n\) is a generalized eigenvalue for the operation of multiplying with \([\omega] \in A_n(V)\). By \cref{propFMSM}, \(A_n(V)\) is finite-dimensional, and by \cref{propGrad}, we have a decomposition of \(A_n(V)\) into generalized eigenspaces \(A_n(V)_{(r)}\) for \(r \in \Lambda(n)\), so by the arguments just presented, \(\Lambda + n \subset \Lambda(n)\).

    Now, arguing similarly as in the proof of \cite[Theorem 3.23]{Hua}, one finds a surjective module map \(\alpha \colon X \rightarrow M_{r+n}\) from an indecomposable \(A_n(V)\)-module that arises as a direct summand of \(A_n(V)_{(r+n)}\), and one shows that \(\alpha\) is a (homogeneous) projective cover of \(M_{r+n}\). By claim \labelcref{claimI}, \(\overline{M}_n(X)\) is projective and by \cref{thmMUniv}, there is a unique \(V\)-module map \(p \colon \overline{M}_n(X) \rightarrow M\) extending \(\alpha\). One shows as in the proof of \cite[Theorem 3.23]{Hua}, with the obvious replacements, that \(p\) defines a projective cover.
    \end{proof}

\begin{remark}
    Notice that the analogous results stated in \cite{Hua} depend crucially on choosing the integer \(n\) to be bigger than \(\mathrm{max_{r,s \in \Lambda}}|\Re(r) - \Re(s)|\), the maximal distance between the real parts of any two conformal weights. Because we assume \(V\) with separated conformal weights, we do not need this condition here. But, since we shift the component of the Zhu algebra in which we find the ideal we induce up to the desired projective cover by \(n\), we do in fact need to bound \(n\) from below by another lower bound \(n_b\) in order to force the desired Zhu algebra component to be nonzero.
\end{remark}
We are ready to prove the main result of this section, which establishes, in the setting of separated conformal weights, the inverse dimension estimate stated before \cref{eqDim}.
\begin{theorem}
    \label{propEmb}
    Let \(V\) be a \(C_2\)-cofinite
    VOA with separated conformal weights and nontrivial Virasoro action on all simple \(V\)-modules. Then, for every \(n > n_b\), there is a projective generator \(G_n\) of \(\Rep(V)\) so that the algebra \(E\left(G_n\right)^{\mathrm{op}}\) embeds as the nonunital subalgebra (and ideal) \(\oplus_{r \in \Lambda}A_n(V)_{(r+n)}\) in the higher Zhu algebra \(A_n(V)\). Moreover, for any projective generator \(G\), we have the inequality
    \begin{align*}
    \dim_{\C}\mathcal{S}(E(G)) \leq  \dim_{\C}A_n(V)/[A_n(V), A_n(V)] - \dim_{\C}A_{n-1}(V)/[A_{n-1}(V), A_{n-1}(V)].
    \end{align*}
\end{theorem}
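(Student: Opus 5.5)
The plan is to build $G_n$ directly from the projective covers of \cref{lemHua}\labelcref{claimII}, compute its endomorphism algebra through the universal property of \cref{thmMUniv}, and then compare symmetric functions.

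\textbf{Construction of $G_n$.} Fix $n > n_b$ and set $Q = \bigoplus_{r \in \Lambda} A_n(V)_{(r+n)}$. Since $[\omega]$ is central, each generalized eigenspace $A_n(V)_{(r+n)}$ is a two-sided ideal and a direct summand of the left regular module. Hence it is a homogeneous, projective, finite-dimensional left $A_n(V)$-module of weight $r+n$. By \cref{lemHua}\labelcref{claimI}, the module
\begin{align*}
G_n = \bigoplus_{r\in\Lambda}\overline{M}_n\big(A_n(V)_{(r+n)}\big)
\end{align*}
is projective in $\Rep(V)$. Each indecomposable summand $X$ used in the proof of \cref{lemHua}\labelcref{claimII} is a direct summand of $A_n(V)_{(r+n)}$. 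Therefore the projective cover $\overline{M}_n(X)$ of every simple module is a direct summand of $G_n$, so $G_n$ is a projective generator.

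\textbf{Endomorphism algebra.} By \cref{thmMUniv}, for any $M$ we have
\begin{align*}
\Hom_V\big(\overline{M}_n(X),M\big) \cong \Hom_{A_n(V)}\big(X,\Omega_n(M)\big).
\end{align*}
The target is $G_n$. By \cref{lemWt} and separatedness, a map from the weight-$(r+n)$ module $X$ lands in $(G_n)_{(r+n)}$, which lies in $\Omega_n(G_n)$. I then need to identify $(G_n)_{(r+n)}$, that is, the generalized $(r+n)$-eigenspace of $\overline{M}_n(A_n(V)_{(s+n)})$ summed over $s$. I expect this to be exactly $A_n(V)_{(r+n)}$, coming from $\overline{M}_n(\cdot)(n)\cong\cdot$ in the $s=r$ summand. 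For $s\neq r$, weights $s+n+\mathbb{Z}$ never meet $r+n$ by separatedness.

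This identification is the main obstacle. One has to check that no other graded piece of $\overline{M}_n(X)$ has generalized $L_0$-eigenvalue $r+n$. The graded pieces $\overline{M}_n(X)(m)$ for $m\neq n$ carry eigenvalue $r+m$ (up to the $c_V/24$ shift convention), so this should follow from the grading together with $[\omega]$ acting as $L_0$ on the degree-$n$ piece. I would verify it via the commutator $[L_0,v(m)]$ as in \eqref{eqNP}.

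Granting this, we get
\begin{align*}
E(G_n) \cong \bigoplus_{r}\Hom_{A_n(V)}\big(A_n(V)_{(r+n)},A_n(V)_{(r+n)}\big) \oplus (\text{zero cross terms}).
\end{align*}
Write $e_r$ for the idempotent with $A_n(V)_{(r+n)} = A_n(V)e_r$ (here $e_r$ is the central idempotent projecting onto the eigenspace). Then
\begin{align*}
\End_{A_n(V)}\big(A_n(V)e_r\big)\cong (e_rA_n(V)e_r)^{\mathrm{op}} = A_n(V)_{(r+n)}^{\mathrm{op}}.
\end{align*}
Summing over $r$ gives the embedding $E(G_n)^{\mathrm{op}}\cong Q \subset A_n(V)$ as an ideal.

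\textbf{Dimension bound.} Since $\dim\mathcal{S}(E(G))$ is Morita invariant (it equals $\dim$ of the trace space, independent of $G$), it suffices to treat $G=G_n$. Passing to the opposite algebra does not change the commutator quotient. Because $A_n(V)=\bigoplus_{\lambda\in\Lambda(n)}A_n(V)_{(\lambda)}$ is a product of algebras, the commutator quotient splits as
\begin{align*}
A_n(V)/[A_n(V),A_n(V)] = Q/[Q,Q]\,\oplus\, A'/[A',A'],
\end{align*}
where $A'$ is the sum of the eigenspaces with $\lambda\notin\Lambda+n$. By \cref{propGrad} these remaining eigenvalues are of the form $r+i$ with $i<n$. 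I would then show that $\pi_n$ restricts to a surjection from $A'$ onto $A_{n-1}(V)$. Modules of $A'$ have eigenvalues below $\Lambda+n$, and conversely $A_{n-1}(V)$-eigenvalues lie in $\{r+i : i\le n-1\}$, which by separatedness are disjoint from $\Lambda+n$. Hence $\dim A'/[A',A']\geq \dim A_{n-1}(V)/[A_{n-1}(V),A_{n-1}(V)]$.

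A cleaner route is to note that $\ker\pi_n \supseteq Q$, because $Q$ annihilates every $A_{n-1}(V)$-module (their eigenvalues avoid $\Lambda+n$). Then $A_{n-1}(V)$ is a quotient of $A_n(V)/Q\cong A'$, and commutator quotients are right exact, so the inequality holds. Subtracting yields
\begin{align*}
\dim\mathcal{S}(E(G)) = \dim Q/[Q,Q] \le \dim A_n(V)/[A_n(V),A_n(V)] - \dim A_{n-1}(V)/[A_{n-1}(V),A_{n-1}(V)].
\end{align*}
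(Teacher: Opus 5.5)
Your proposal is correct and follows the paper's proof: the same generator (your $\overline{M}_n(A_n(V)_{(r+n)})$ is the paper's $\bigoplus_j \overline{M}_n(X_{r+n,j})$ regrouped by additivity of $\overline{M}_n$), the same universal-property-plus-separated-weights computation of $E(G_n)$, and the same key fact $\bigoplus_{r\in\Lambda}A_n(V)_{(r+n)}\subseteq\ker\pi_n$, combined with Morita invariance, for the dimension bound. The step you flag as the main obstacle is where the paper simply cites \cite[Proposition 3.21]{Hua}, and your commutator argument with $[L_0,v(m)]$ applied to the spanning elements $v_1(i_1)\cdots v_k(i_k)x$ does establish it.
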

\begin{proof}
    Because of the assumption \(n > n_b\), as explained in the proof of \cref{lemHua}, the Zhu algebra \(A_n(V)\) decomposes into generalized eigenspaces \(A_n(V) = \oplus_{r \in \Lambda(n)}A_n(V)_{(r)}\), where for the set of generalized eigenvalues of \(A_n(V)\) with respect to the central element \([\omega]\), we have \(\Lambda, \Lambda + n \subset \Lambda(n)\). This means that for every \(r \in \Lambda(n)\), \(([\omega] - r)^{d_r}A_n(V)_{(r)} = 0\) for some \(d_r \in \mathbb{N}\setminus\{0\}\). Decompose further \(A_n(V)_{(r)} = \oplus_{i = 1}^{k_r}X_{r,i}\) into indecomposable \(A_n(V)\)-modules. Then, every \(X_{r,i}\) is automatically projective. By \cref{lemHua} claim \labelcref{claimII} and its proof, for every irreducible \(V\)-module \(M\) with conformal weight \(r \in \Lambda\), there is an \(X_{r + n,i}\) for some \(1 \leq i \leq k_{r + n}\) from which one obtains a projective cover \(p_{r,i} \colon \overline{M}_n(X_{r+n,i}) \rightarrow M\). Denote the set of these \(X_{r+n,i}\) by \(\mathfrak{X}\). Then, \(G_n' = \oplus_{X \in \mathfrak{X}}\overline{M}_n(X)\) is a projective generator of \(\Rep(V)\) because the \(p_{r,i}\) corresponding to \(X \in \mathfrak{X}\) are projective covers of the collection of simple objects.
    Define moreover the \(V\)-module 
    \begin{align*}
        G_n = \oplus_{r \in \Lambda}\oplus_{j = 1}^{k_{r+n}}\overline{M}_n(X_{r+n,j}).
    \end{align*}
    Then \(G_n = G_n' \oplus Q\) for \(Q\) a projective module, since by \cref{lemHua} statement \labelcref{claimI}, every \(\overline{M}_n(X_{r+n,j})\) is projective, and therefore \(G_n\) is a projective generator as well. There is an isomorphism of algebras
    \begin{align*}
        E(G_n) &\cong \oplus_{r, s \in \Lambda}\oplus_{i = 1}^{k_{r+n}} \oplus_{j = 1}^{k_{s+n}} \Hom_V\left(\overline{M}_n(X_{r+n,i}), \overline{M}_n(X_{s+n,j})\right) \\
        &\cong \oplus_{r, s \in \Lambda}\oplus_{i = 1}^{k_{r+n}} \oplus_{j = 1}^{k_{s+n}} \Hom_{A_n(V)}\left(X_{r+n,i}, \Omega_n\overline{M}_n(X_{s+n,j})\right)
    \end{align*}
    by the universal property (cf. \cref{thmMUniv}). By \cite[Proposition 3.21]{Hua}, \(\overline{M}_n(X)(m) = \overline{M}_n(X)_{(r - n +m)}\) for homogeneous \(X\) of weight \(r\). Thus, for \(\varphi \in \Hom_{A_n(V)}\left(X_{r+n,i}, \Omega_n\overline{M}_n(X_{s+n,j})\right)\) and \(x \in X_{r+n,i}\),  
    \begin{align*}
        0 = \varphi(([\omega] - r -n)^{d_{r+n}} \triangleright x) = ([\omega] - r-n)^{d_{r+n}} \triangleright \varphi(x) = (L_0 - r-n)^{d_{r+n}}\varphi(x).
    \end{align*}
    By the condition on the conformal weights, \(\varphi(x) \neq 0\) only if \(r = s\), and then \(\varphi(x) \in \overline{M}_n(X_{r+n,j})(n) \cong X_{r+n,j}\). But there is also an inclusion \(\overline{M}_n(X_{r,i})(n) \subset \Omega_n\overline{M}_n(X_{r,i})\) by \cref{propOmega}, whence it follows that the nonzero hom-spaces in the above decomposition are isomorphic to \(\Hom_{A_n(V)}(X_{r+n,i}, X_{r+n,j})\). Since every \(X_{r,i}\) is projective and indecomposable, \(X_{r,i} = A_n(V)e_{r,i}\) for some primitive idempotent \(e_{r,i}\). The \(e_{r,i}\) for \(r \in \Lambda(n)\) and \(i \in \{1, \hdots, k_r\}\) form a complete set of primitive orthogonal idempotents of \(A_n(V)\) so that \(e_{s,j} *_n A_n(V)_{(r)} = A_n(V)_{(r)} *_n e_{s,j} = 0\) holds for \(s \neq r\). With this, we compute
    \begin{align*}
        E(G_n) &\cong \oplus_{r \in \Lambda} \oplus_{i,j = 1}^{k_{r+n}} \Hom_{A_n(V)}\left(X_{r+n,i}, X_{r+n,j}\right) \\ &\cong \oplus_{r \in \Lambda} \oplus_{i,j = 1}^{k_{r+n}}e_{r+n,i}A_n(V)_{(r+n)}e_{r+n,j} = \oplus_{r \in \Lambda} A_n(V)_{(r+n)}.
    \end{align*}
    The identification in the middle is an antiisomorphism of algebras, which proves the first part. 

    Let us denote by \(E_n = \oplus_{r \in \Lambda} A_n(V)_{(r+n)}\) the ideal in \(A_n(V)\) that is the image of the embedding of \(E(G_n)\). Let moreover \(K_n = O_{n-1}(V)/O_n(V)\) denote the kernel of the map \(\pi_n \colon A_n(V) \rightarrow A_{n-1}(V)\). We claim that \(E_n \subset K_n\). Indeed, assuming that \(x \in A_n(V)_{(r+n)}\) where \(r \in \Lambda\), one has \(0 = \pi_n(([\omega] - r - n)^{d_{r+n}}x) = ([\omega] - r - n)^{d_{r+n}}\pi_n(x)\) since \(\pi_n\) is induced by the identity map \(\id_V\) that maps the central element \(\omega\) to itself. One therefore has \(\pi_n(x) \in A_{n-1}(V)_{(r + n)}\). However, if the latter space were nonzero, then because \(\Lambda(n-1) \subset \{s + i| s \in \Lambda, 0 \leq i \leq n-1\}\) by \cref{propGrad}, the condition that the conformal weights are separated would be violated. It follows that \(x \in K_n\). 

    Now, the algebra map \(\pi_n\) induces a well-defined surjective map \(\overline{\pi}_n \colon A_n(V)/[A_n(V), A_n(V)] \rightarrow A_{n-1}(V)/A_{n-1}(V), A_{n-1}(V)]\) with kernel \(\overline{K}_n\) of dimension
    \begin{align*}
        \dim_{\C}\overline{K}_n = \dim_{\C}A_n(V)/[A_n(V), A_n(V)] - \dim_{\C}A_{n-1}(V)/[A_{n-1}(V), A_{n-1}(V)].
    \end{align*}
    Now, note that \(E_n \cap [A_n(V), A_n(V)] = [E_n, E_n]\), which follows from the fact \(A_n(V)_{(r)} *_n A_n(V)_{(s)} = 0\) for \(s \neq r\), so \(E_n/[E_n, E_n]\) gives a well-defined subspace of \(\overline{K}_n\).
    The desired inequality follows by noting that \(\dim_{\C}\mathcal{S}(E(G_n)) = \dim_{\C}E_n/[E_n, E_n]\) for the projective generator \(G_n\). Finally, for any projective generator \(G\) of \(\Rep(V)\), there is an equivalence of categories provided by
    \begin{align*}
        \mathcal{F}_G \circ \calh_{G_n}\colon \rmod E(G_n) \rightarrow \rmod E(G)
    \end{align*}
    using the notation from \cref{subSecProj}. Thus, the algebras \(E(G)\) and \(E(G_n)\) are Morita equivalent. Since their commutator quotients coincide with the zeroth Hochschild homology \(HH_0(E(G_n))\) and \(HH_0(E(G))\), respectively, and Hochschild homology is Morita invariant, the dimension of the two spaces of symmetric functions coincide.
\end{proof}
We obtain \cite[Conjecture 5.8]{GR} for the case of separated conformal weights.
\begin{corollary}
    \label{corIso}
    For \(V\) a \(C_2\)-cofinite vertex operator algebra 
    with nontrivial action on its simples by the Virasoro algebra and with separated conformal weights, the linear map \(\Phi_G \colon \mathcal{S}(E) \rightarrow \calc_1(V)\) is an isomorphism.
\end{corollary}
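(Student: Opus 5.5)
The plan is a dimension count combining surjectivity with the upper bound just established. By \cref{propSC1iso}, \(\Phi_G\colon \mathcal{S}(E(G)) \rightarrow \calc_1(V)\) is surjective for any projective generator \(G\) of \(\Rep(V)\), so \(\dim_{\C}\calc_1(V) \leq \dim_{\C}\mathcal{S}(E(G))\). Both spaces are finite-dimensional: \(E(G)\) is finite-dimensional by the argument preceding \cref{propP}, and \(\calc_1(V)\) is finite-dimensional by \cref{thmMiy}. It therefore suffices to prove the reverse inequality \(\dim_{\C}\mathcal{S}(E(G)) \leq \dim_{\C}\calc_1(V)\); a surjective linear map between finite-dimensional spaces of equal dimension is then an isomorphism.

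For the reverse inequality, fix an integer \(n\) with \(n > n_a\). Recall that \(n_a > n_b\) by construction in \cref{subSecMiy}, so \(n > n_b\) as well. Hence both of the following hold for this single \(n\):
\begin{itemize}
\item the inequality of \cref{propEmb}, which bounds \(\dim_{\C}\mathcal{S}(E(G))\) by \(\dim_{\C}A_n(V)/[A_n(V),A_n(V)] - \dim_{\C}A_{n-1}(V)/[A_{n-1}(V),A_{n-1}(V)]\), for any projective generator \(G\);
\item the dimension formula \eqref{eqDim} from \cref{thmMiy}, which states that this same difference equals \(\dim_{\C}\calc_1(V)\).
\end{itemize}
Chaining them gives \(\dim_{\C}\mathcal{S}(E(G)) \leq \dim_{\C}\calc_1(V)\), which completes the argument.

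The only delicate point is choosing \(n\) so that both results apply simultaneously: \cref{propEmb} needs \(n > n_b\), while Miyamoto's formula needs \(n > n_a\). The inequality \(n_a > n_b\) resolves this. One should also note that \cref{propEmb} already handles an arbitrary projective generator \(G\) via Morita invariance of \(HH_0\), so no separate argument is needed for general \(G\).
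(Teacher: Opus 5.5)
Your proposal is correct and is the paper's own argument: the surjectivity of \(\Phi_G\) from \cref{propSC1iso}, combined with the upper bound of \cref{propEmb} and Miyamoto's dimension formula \eqref{eqDim}, all applied at a common \(n > n_a\). You spell out what the paper's one-line proof leaves implicit, namely that \(n_a > n_b\) (so \cref{propEmb} applies at the same \(n\)) and that both spaces are finite-dimensional.
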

\begin{proof}
    This follows directly from \cref{propSC1iso}, \cref{eqDim} and \cref{propEmb} for \(n > n_a\).
\end{proof}

\subsection{Trace spaces of additive categories}
\label{subSecTr}
Let \(A\) be a finite-dimensional associative algebra.
Define the \textit{trace space of the algebra} \(A\), the vector space \(\mathrm{Tr}(A)\), as
\begin{align*}
    \mathrm{Tr}(A) = A/[A, A].
\end{align*}
The trace space fits into the exact sequence of vector spaces
\begin{align}
\label{eqTrAlg}
A \otimes_{\mathbb{C}} A \longrightarrow A \longrightarrow \mathrm{Tr}(A) \longrightarrow 0,
\end{align}
where the first map is given by \(a \otimes b \mapsto [a, b] = ab - ba\) and the second is the canonical projection. The trace space is relevant to us since the symmetric functions on \(A\) can be described by its dual, that is \(\mathcal{S}(A) \cong \Tr(A)^*\), which follows from elementary arguments. 

\medskip
We are interested in describing the symmetric functions on the algebra \(E = \End_V(G)\) through the representation category of \(V\). We are therefore seeking a categorical analogue of the trace space. For the following definition, we need the concept of a coend (see, for instance, \cite{L}).

\begin{definition}
    For an essentially small \(\C\)-linear category \(\mathcal{A}\), we define the \textit{trace space of} \(\mathcal{A}\) as the vector space given by the coend of the functor \(\Hom_{\mathcal{A}}(-,-)\colon \mathcal{A}^{\mathrm{op}} \times \mathcal{A} \rightarrow \vect_{\C}\), that is
    \begin{align*}
        \Tr(\mathcal{A}) = \int^{X \in \mathcal{A}} \Hom_{\mathcal{A}}(X, X).
    \end{align*}
\end{definition}
Note that the trace space always exists because the category of vector spaces is cocomplete. It follows from the universal property of the coend that a functor \(F \colon \cala \rightarrow \mathcal{B}\) induces a linear map \(\Tr(F) \colon \Tr(\cala) \rightarrow \Tr(\mathcal{B})\), and this assignment extends to a functor \(\Tr \colon \mathsf{Cat}_{\C} \rightarrow \vect_{\C}\) from the category of essentially small \(\C\)-linear categories to vector spaces. If \(F\colon \mathcal{A \simeq \mathcal{B}}\) is an equivalence of \(\C\)-linear categories, then \(\Tr(F)\) is an isomorphism \(\Tr(\mathcal{A}) \cong \Tr(\mathcal{B})\). 
\begin{remark}
    We explain the analogy between the notion of trace space for an essentially small \(\C\)-linear category \(\mathcal{A}\) and the one for an algebra \(A\). Recall that for a functor \(F\colon \mathcal{C}^{\mathrm{op}} \times \mathcal{C} \rightarrow \mathcal{D}\) into a cocomplete category \(\mathcal{D}\), the coend can be computed explicitly as a coequalizer,
    \begin{align*}
        \int^{c \in \mathcal{C}}F(c, c) \cong \mathrm{coeq}\left( \bigoplus_{c, c' \in \calc, f \in \mathcal{C}(c, c')}F(c', c) \, \substack{\overset{F^*}{\textstyle \longrightarrow}  \\ \underset{F_*}{\textstyle \longrightarrow}} \, \bigoplus_{c \in \mathcal{C}}F(c, c)\right).
    \end{align*}
    For \(\calc = \mathcal{A}\), \(\cald = \vect_{\C}\) and \(F = \Hom_{\mathcal{A}}(-,-)\), the two arrows \(F^*, F_*\) are given on a fixed component \(\mathcal{A}(X, Y)\) and a chosen \(f \in \mathcal{A}(Y, X)\) by pre- and postcomposition, respectively. Hence, we have
    \begin{align*}
        \Tr( \mathcal{A}) \cong \left( \bigoplus_{X \in \mathcal{A}}\mathcal{A}(X, X) \right)/I(\mathcal{A}) \, \, \, \, \text{where} \,\,\, I(\mathcal{A}) = \langle g \circ f - f \circ g| g \in \mathcal{A}(X, Y), f \in \mathcal{A}(Y, X)\rangle_{\mathbb{C}}.
    \end{align*}
    In analogy to the exact sequence from \cref{eqTrAlg}, we observe that \(\Tr(\mathcal{A})\) fits into the sequence
    \begin{align*}
        \bigoplus_{X, Y \in \mathcal{A}}\mathcal{A}(X, Y) \otimes_{\mathbb{C}} \mathcal{A}(Y, X) \xrightarrow{\varphi} \bigoplus_{X \in \mathcal{A}}\mathcal{A}(X, X) \rightarrow \Tr(\mathcal{A}) \rightarrow 0,
    \end{align*}
    where the first map is given by \(\varphi(f \otimes g) = f \circ g - g \circ f\) and the second is the obvious projection. The latter description of the trace space is the viewpoint adopted in \cite[Section 2]{K}.
\end{remark}
For the \(\C\)-linear category of finitely generated projective \(A\)-modules \(\mathcal{A} = \rproj A\), we will give an explicit description to the \textit{dual} of the trace space by providing an explicit sequence of isomorphisms
\begin{align*}
    \Tr( \rproj A)^* \cong \mathrm{Cyc}(\rproj A) \cong \mathcal{S}(A)
\end{align*}
in the subsequent section,
where we defined the middle space in a moment. This also implies that \(\Tr(\rproj A) \cong \Tr(A)\). By substituting \(A = E(G)\), we can then formulate our main result.

\subsection{Trace space dual and one-point functions}
\looseness = -1 For an essentially small \(\mathbb{C}\)-linear category \(\mathcal{A}\), consider an arbitrary cowedge \(\beta\colon \Hom_{\mathcal{A}}(-,-) \xRightarrow{\bullet} \Delta_{\mathbb{C}}\), with \(\Delta_{\C} \colon \cala \times \cala^{\mathrm{op}} \rightarrow \vect_{\C}\) the constant functor with value the one-dimensional vector space \(\C\). This means that we have a commutative diagram
    \[
    \begin{tikzcd}
        \mathcal{A}(X, Y) \ar[d, swap, "{\mathcal{A}(f, Y)}"] \ar[r, "{\mathcal{A}(X, f)}"] & \mathcal{A}(X, X) \ar[d, "{\beta_X}"] \\
        \mathcal{A}(Y, Y) \ar[r, swap, "{\beta_Y}"] & \mathbb{C}
    \end{tikzcd}
    \]
which says that \(\beta_X(f \circ g) = \beta_Y(g \circ f)\) for every \(f \in \mathcal{A}(Y, X), g \in \mathcal{A}(X, Y)\) and \(X, Y \in \mathcal{A}\). In other words, a cowedge \(\beta\) corresponds to a family of homomorphisms \(\beta_{\bullet} = \{\beta_X\}_{X \in \mathcal{A}}\) satisfying the cyclicity property. We denote the \(\mathbb{C}\)-vector space of these objects with componentwise addition by
    \begin{align*}
        \mathrm{Cyc}(\mathcal{A}) = \left\{ \, \beta_{\bullet} \, \Bigg | \, \begin{array}{l}
             \beta_X \in \Hom_{\mathbb{C}}(\mathcal{A}(X, X),  \mathbb{C}), \, \beta_X(f \circ g) = \beta_Y(g \circ f) \, \\ 
             \text{for all} \, X, Y \in \mathcal{A}, \text{ and } f \in \mathcal{A}(Y, X), g \in \mathcal{A}(X, Y)
        \end{array} \right\}.
    \end{align*}
We have the following characterization of this space.
    \begin{lemma}
    \label{lemTrW}
        There is a canonical isomorphism of vector spaces \(\Upsilon \colon \mathrm{Tr}(\mathcal{A})^* \xrightarrow{\sim} \mathrm{Cyc}(\mathcal{A})\).
    \end{lemma}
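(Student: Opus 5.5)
The plan is to use the explicit coequalizer description of the coend from the preceding remark, namely
\[
\Tr(\cala) \cong \Big(\bigoplus_{X \in \cala}\cala(X,X)\Big)/I(\cala),
\]
where \(I(\cala)\) is spanned by the elements \(g \circ f - f \circ g\) for \(g \in \cala(X,Y)\) and \(f \in \cala(Y,X)\). Since \(\cala\) is essentially small, I would first replace it by a small skeleton, or simply work with a set of objects. Dualizing the right-exact sequence
\[
\bigoplus_{X,Y}\cala(X,Y)\otimes_{\C}\cala(Y,X) \xrightarrow{\varphi} \bigoplus_X \cala(X,X) \xrightarrow{p} \Tr(\cala) \to 0
\]
gives a left-exact sequence
\[
0 \to \Tr(\cala)^* \xrightarrow{p^*} \Big(\bigoplus_X\cala(X,X)\Big)^* \xrightarrow{\varphi^*} \Big(\bigoplus_{X,Y}\cala(X,Y)\otimes\cala(Y,X)\Big)^*.
\]
Here \(\Hom_{\C}(-,\C)\) is exact on vector spaces, so left exactness holds with no finiteness assumption.

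Next I would identify the terms. The dual of a direct sum is the product of the duals, so \(\big(\bigoplus_X \cala(X,X)\big)^* \cong \prod_X \Hom_{\C}(\cala(X,X),\C)\). An element of this product is exactly a family \(\beta_\bullet = \{\beta_X\}_X\). The condition \(\varphi^*(\beta_\bullet) = 0\) means that \(\beta\) vanishes on every \(f\circ g - g\circ f\), that is, \(\beta_X(f\circ g) = \beta_Y(g\circ f)\) for all \(f \in \cala(Y,X)\) and \(g \in \cala(X,Y)\). This is precisely the defining condition of \(\mathrm{Cyc}(\cala)\).

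I would then define \(\Upsilon(\lambda)_X = \lambda \circ p \circ \iota_X\), where \(\iota_X\) is the inclusion of the summand \(\cala(X,X)\). Equivalently, \(\Upsilon(\lambda)_X\) is \(\lambda\) composed with the canonical coend structure map \(\cala(X,X) \to \Tr(\cala)\). Described this way, \(\Upsilon\) is manifestly canonical and needs no coequalizer model, and the dinaturality of the structure maps gives cyclicity immediately. Injectivity and surjectivity onto \(\mathrm{Cyc}(\cala)\) then follow from the exact sequence above. Alternatively, they follow directly from the universal property of the coend: cowedges \(\Hom_\cala(-,-) \Rightarrow \Delta_\C\) correspond bijectively to linear maps \(\Tr(\cala) \to \C\). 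This bijection is linear because the universal property is compatible with the \(\C\)-linear structure on cowedges.

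The main point requiring care is not a real obstacle but a bookkeeping issue. With a non-skeletal, essentially small \(\cala\), the family \(\beta_\bullet\) is indexed by all objects, which form a class. I would handle this by noting that the cyclicity condition applied to isomorphisms determines \(\beta\) on the whole category from its values on a skeleton. Alternatively, I would define \(\Upsilon\) directly through the universal property, which avoids choosing a skeleton. With that settled, canonicity and linearity are immediate.
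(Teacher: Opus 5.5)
Your proposal is correct and essentially matches the paper's proof. Your \(\Upsilon(\lambda)_X=\lambda\circ p\circ\iota_X=\lambda\circ\omega_X\) is exactly the paper's \(\Upsilon\), and the paper's bijectivity argument is the universal-property route you list as an alternative (for each cyclic family \(\beta_\bullet\) there is a unique \(h^\beta\) with \(h^\beta\circ\omega_X=\beta_X\)); your dualized coequalizer sequence is just the concrete form of that universal property in \(\vect_{\C}\), and your skeleton remark handles a set-theoretic point the paper leaves implicit.
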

    \begin{proof}
    Because \(\mathrm{Tr}(\mathcal{A})\) is a coend with defining cowedge \(\omega \colon \Hom_{\mathcal{A}}(-,-) \xRightarrow{\bullet} \Delta_{\int^{X \in \mathcal{A}}\mathcal{A}(X, X)}\), for every \(\beta_{\bullet} \in \mathrm{Cyc}(\mathcal{A})\), there is a unique linear morphism \(h^{\beta} \colon \mathrm{Tr}(\mathcal{A}) \rightarrow \mathbb{C}\) making the diagram
    \[\begin{tikzcd}[ampersand replacement=\&]
	\& {\mathrm{Tr}(\mathcal{A})} \\
	{\mathrm{Hom}_{\mathcal{A}}(X, X)} \& {\mathbb{C}}
	\arrow["{ \exists !h^{\beta}}", dashed, from=1-2, to=2-2]
	\arrow["{\omega_X}", shift left, curve={height=-6pt}, from=2-1, to=1-2]
	\arrow["{\beta_X}"', from=2-1, to=2-2]
    \end{tikzcd}\]
    commute. Define the \(\C\)-linear map \(\Upsilon\) by \(\Upsilon(h) = (h \circ \omega_{X})_{X \in \mathcal{A}}\).
    The cyclicity of \(\omega\) shows that the map is well-defined and defines an isomorphism because by the commutativity of the above diagram, every family \(\beta_{\bullet} \in \mathrm{Cyc}(\mathcal{A})\) has the unique preimage \(h^{\beta}\).
    \end{proof}
We specialize to \(\mathcal{A} = \rproj A\) for \(A\) a finite-dimensional algebra and denote by \(\mu\colon A \rightarrow \End_A(A_A)\) the bijective mapping \(a \mapsto a \triangleright \id_A\), where the left action of \(A\) on \(\End_A(A_A)\) is given by \((a \triangleright f)(b) = a \cdot f(b)\) for \(a, b \in A, f \in \End_A(A_A)\). In this setting, another characterization of \(\mathrm{Cyc}(\mathcal{A})\) is available.

\begin{lemma}
    \label{lemWisoS} With the notation above, the linear map \( \Theta \colon \mathrm{Cyc}(\rproj A) \rightarrow \mathcal{S}(A), \beta_{\bullet} \mapsto \beta_A \circ \mu\) is an isomorphism of vector spaces with inverse \(\phi \mapsto \tr_{\bullet}^{\phi}\). 
    
\end{lemma}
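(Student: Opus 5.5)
We verify four things in order: \(\Theta\) lands in \(\mathcal{S}(A)\); \(\phi \mapsto \tr_\bullet^{\phi}\) lands in \(\mathrm{Cyc}(\rproj A)\); \(\Theta \circ (\phi\mapsto \tr^\phi_\bullet)=\id\); and \(\Theta\) is injective. Together these show the two maps are mutually inverse.

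For the first step, take \(\beta_\bullet \in \mathrm{Cyc}(\rproj A)\) and \(a,b\in A\). Then \(\mu(ab)=\mu(a)\circ\mu(b)\), since \((a\triangleright \id_A)\circ(b \triangleright \id_A)(c) = abc\). Applying the cyclicity condition with \(X=Y=A_A\), \(f=\mu(a)\), \(g=\mu(b)\) gives \(\beta_A(\mu(ab))=\beta_A(\mu(ba))\), so \(\Theta(\beta_\bullet)\in\mathcal{S}(A)\).

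For the second step, the cyclicity property of pseudo-traces recalled in \cref{subsecPseudo} (\cite[Proposition 4.2]{AN}) says exactly that \(\tr^\phi_\bullet\) is a cowedge on \(\rproj A\). For the composite, we compute \(\tr^\phi_A(\mu(a))\) using the \(A\)-coordinate system of \(A_A\) given by \(u_1 = 1_A\) and \(\alpha_1=\id_A\). This gives \(\tr^\phi_A(\mu(a)) = \phi(\alpha_1(\mu(a)(1_A)))=\phi(a)\), so \(\Theta(\tr^\phi_\bullet)=\phi\).

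The main step is injectivity of \(\Theta\). We must show that a cyclic family \(\beta_\bullet\) is determined by \(\beta_A\). Let \(W\in\rproj A\) with coordinate system \(\{u_i,\alpha_i\}_{i=1}^m\), and let \(f\in\End_A(W)\). Set \(X=A^m\) and define \(\iota\colon W\to A^m\), \(w\mapsto(\alpha_i(w))_i\), and \(\pi\colon A^m\to W\), \((a_i)_i\mapsto \sum_i u_i\triangleleft a_i\). By \cref{eqPCS}, \(\pi\circ\iota=\id_W\). Cyclicity then gives
\begin{align*}
\beta_W(f)=\beta_W(\pi\circ\iota\circ f)=\beta_{A^m}(\iota\circ f\circ\pi).
\end{align*}
Next write \(\id_{A^m}=\sum_j \mathrm{in}_j\circ \mathrm{pr}_j\) and use linearity together with cyclicity once more:
\begin{align*}
\beta_{A^m}(h)=\sum_j\beta_{A^m}(\mathrm{in}_j\circ \mathrm{pr}_j\circ h)=\sum_j \beta_A(\mathrm{pr}_j\circ h\circ \mathrm{in}_j).
\end{align*}
Taking \(h=\iota\circ f\circ\pi\), the \(j\)-th term is \(\mathrm{pr}_j\circ h\circ\mathrm{in}_j=\mu(\alpha_j(f(u_j)))\), since it is the right \(A\)-linear endomorphism of \(A_A\) sending \(1_A\) to \(\alpha_j(f(u_j))\). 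Hence \(\beta_W(f)=\sum_j \Theta(\beta_\bullet)(\alpha_j(f(u_j)))=\tr^{\Theta(\beta_\bullet)}_W(f)\). This shows both that \(\beta_\bullet\) is determined by \(\Theta(\beta_\bullet)\), so \(\Theta\) is injective, and directly that \(\tr^{\Theta(\beta_\bullet)}_\bullet=\beta_\bullet\).

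The only point needing care is the bookkeeping of left versus right actions in identifying \(\End_A(A_A)\) with \(A\) via \(\mu\). In particular one must check that \(\mu\) is multiplicative rather than anti-multiplicative with these conventions. It is multiplicative, since right-module endomorphisms of \(A_A\) are left multiplications.
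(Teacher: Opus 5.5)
Your proof is correct and follows essentially the same route as the paper: you reduce \(\beta_W\) to \(\beta_{A^m}\) by cyclicity, then reduce to \(\beta_A\) via \(\id_{A^m}=\sum_j \mathrm{in}_j\circ\mathrm{pr}_j\), and you evaluate \(\tr_A^{\phi}\circ\mu\) with the coordinate system \(u=1_A\), \(\alpha=\id_A\). The only cosmetic difference is that you use the fixed splitting \(\pi\circ\iota=\id_W\) from a coordinate system instead of lifting each endomorphism through a surjection \(p\colon A^m\to X\), so you read off \(\beta_W=\tr_W^{\Theta(\beta_\bullet)}\) directly from the definition of the pseudo-trace rather than invoking cyclicity of pseudo-traces a second time.
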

\begin{proof}
        For any \(\beta_{\bullet} \in \mathrm{Cyc}(\rproj A)\) one has \(\beta_A \circ \mu \in \mathcal{S}(A)\) by definition, so the map \(\Theta\) is well-defined. The regular module \(A\) is a projective generator for the category of finitely generated right \(A\)-modules \(\rmod A\), so we find for any projective object \(X \in \rproj A\) an integer \(m \in \mathbb{N}\) and a surjective right \(A\)-morphism \(p \colon A^m \twoheadrightarrow X\). Using projectivity of \(X\), for an arbitrary \(A\)-module map \(g \in \End_AX\) there is a lift \(\Tilde{g} \colon X \rightarrow A^m\) so that \(g\) factors through \(\Tilde{g}\) and \(p\), that is, the diagram
        \[
            \begin{tikzcd}
            & A^m \ar[d, "{p}"] \\
            X \ar[ur, dotted, "{\Tilde{g}}"] \ar[r, "{g}"] & X
        \end{tikzcd}
        \]
        commutes. With this factorization at hand, any component \(\beta_X\) can be expressed as
        \begin{align*}
            \beta_X(g) = \beta_{A^m}(\Tilde{g} \circ p) = \beta_{A^m}\left(\sum_{i = 1}^m \iota_i \circ \pi_i \circ \Tilde{g} \circ p \right) = \sum_{i = 1}^m\beta_A(\pi_i \circ \Tilde{g} \circ p \circ \iota_i),
        \end{align*}
        with the canonical injections \(\iota_i\colon A \rightarrow A^m\) and projections \(\pi_i\colon A^m \rightarrow A\) for the \(i\)'th component. Here, we have used cyclicity and the decomposition \(\sum_{i=1}^m \iota_i \circ \pi_i = \id_{A^m}\) of the identity map on \(A^m\). We therefore see that \(\beta_{\bullet}\) is completely determined by its component \(\beta_A\) and the cyclicity property. 
        
        Note that the maps \(\iota_i, \pi_i\) yield an \(A\)-coordinate system for \(A^m\), that is, for every vector \((a_1, \hdots, a_m) \in A^m\), we can write \((a_1, \hdots, a_m) = \sum_{i=1}^m\iota_i(1_A) \triangleleft \pi_i(a_1, \hdots, a_m)\). We also see that for any \(\phi \in \mathcal{S}(A)\), the system of pseudo-traces \(\tr_{\bullet}^{\phi}\) is an element of \(\mathrm{Cyc}(\rproj A)\). By definition of the pseudo-trace,
        \begin{align*}
            \tr_{X}^{\, \beta_A \, \circ \, \mu}(g) = \tr_{A^m}^{\, \beta_A \, \circ \, \mu}(\Tilde{g} \circ p) = \sum_{i=1}^m\beta_A((\pi_i \circ \Tilde{g} \circ p \circ \iota_i)(1_A) \triangleright \id_A)
        \end{align*}
        for the symmetric function \(\beta_A \circ \mu\) and thus, \(\beta_X = \tr_X^{\beta_A \circ \mu}\). This shows that the map \(\phi \mapsto \tr_{\bullet}^{\phi}\) defines a left inverse to \(\Theta\).
        
        For the other direction, because of the isomorphism \(\mu\colon \End_A(A_A) \cong A\), any \(A\)-endomorphism \(g\) can be written as a right multiplication \(g = a \triangleright \id_A\) for a unique \(a \in A\). Choosing the \(A\)-coordinate sytem \(\alpha = \id_A, u = 1_A\) reveals that
        \begin{align*}
            \tr_A^{\phi}(g) = \phi(\alpha(g(u))) = \phi(g(1_A)) = \phi(a).
        \end{align*}
        Hence, it follows that \(\Theta(\tr_{\bullet}^{\phi}) = \tr_A^{\phi} \circ \, \mu = \phi\), finishing the proof.
\end{proof}

We summarize the results of this article in the following theorem. Recall the notation \(E = E(G) = \End_V(G)\) for \(G\) a projective generator of the category \(\Rep(V)\), and the equivalence of categories \(\mathcal{H}_G \colon \rproj E(G) \rightarrow \Rep(V)\). Moreover, denote by \(\omega_E \colon \End_E(E) \rightarrow \Tr(\rproj E)\) the component at \(E\) of the cowedge \(\omega\) defining the coend.
\begin{theorem}
\label{thmMain}
    Let \(V = \bigoplus_{m = 0}^{\infty}V_m\) be a 
    \(C_2\)-cofinite vertex operator algebra such that every simple \(V\)-module carries a nontrivial action of the Virasoro algebra. Then, for every projective generator \(G \in \Rep(V)\), there is a canonical epimorphism of finite-dimensional \(\mathbb{C}\)-vector spaces 
    \begin{align*}
        \Gamma_G\colon \mathrm{Tr}(\proj \Rep(V))^* \twoheadrightarrow \mathcal{C}_1(V), \, \, \, \, \, \,\Gamma_G(h) = \xi_G^{\phi_h},
    \end{align*}
    where \(\phi_h \in \mathcal{S}(E)\) is the symmetric function defined as the composition \(\phi_h = \Tr(\mathcal{H}_G)^*(h) \circ \omega_E \circ \mu\).
    If additionally \(V\) has separated conformal weights, then \(\Gamma_G\) is an isomorphism.
\end{theorem}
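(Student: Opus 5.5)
The plan is to write \(\Gamma_G\) as a composite of linear isomorphisms followed by \(\Phi_G\), so that surjectivity and, under separated weights, bijectivity are inherited directly from \cref{propSC1iso} and \cref{corIso}. The first step is to identify \(\Tr(\proj\Rep(V))\) with \(\Tr(\rproj E)\). By \cref{subSecProj}, \(\mathcal{F}_G\) and \(\mathcal{H}_G\) form an adjoint equivalence \(\Rep(V)\simeq \rmod E\). Any equivalence of abelian categories preserves projective objects, so it restricts to an equivalence \(\mathcal{H}_G\colon \rproj E \simeq \proj\Rep(V)\). By functoriality of \(\Tr\) on \(\C\)-linear equivalences, \(\Tr(\mathcal{H}_G)\) is then an isomorphism of vector spaces, and therefore so is its dual \(\Tr(\mathcal{H}_G)^*\colon \Tr(\proj\Rep(V))^*\to \Tr(\rproj E)^*\).

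The second step is to identify \(\Tr(\rproj E)^*\) with \(\mathcal{S}(E)\). Composing \(\Upsilon\) from \cref{lemTrW} with \(\Theta\) from \cref{lemWisoS} gives an isomorphism \(\Tr(\rproj E)^*\to \mathcal{S}(E)\). On a functional \(h'\) it is \(h'\mapsto (h'\circ\omega_X)_X\mapsto h'\circ\omega_E\circ\mu\). Precomposing with \(\Tr(\mathcal{H}_G)^*\) gives \(h\mapsto \Tr(\mathcal{H}_G)^*(h)\circ\omega_E\circ\mu=\phi_h\), which is exactly the assignment in the statement. Hence \(h\mapsto\phi_h\) is a linear isomorphism \(\Tr(\proj\Rep(V))^*\cong\mathcal{S}(E)\), and
\begin{align*}
\Gamma_G=\Phi_G\circ(\Theta\circ\Upsilon\circ\Tr(\mathcal{H}_G)^*).
\end{align*}
Canonicity follows because every ingredient is determined by \(G\) alone: the equivalence \(\mathcal{H}_G\), the universal cowedge, and \(\mu\). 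No coordinate systems enter. Finite-dimensionality holds because \(\mathcal{S}(E)\subseteq E^*\) and \(E\) is finite-dimensional, since \(\Rep(V)\) is finite (\cref{thmFA}). It also holds because \(\calc_1(V)\) has finite dimension by \cref{thmMiy}.

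The conclusions then follow. Surjectivity is \cref{propSC1iso}, since composing with an isomorphism preserves epimorphisms. Under separated conformal weights, \cref{corIso} says \(\Phi_G\) is an isomorphism, so \(\Gamma_G\) is one as well.

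The main point needing care is the first step. One must check that \(\mathcal{H}_G\) genuinely lands in \(\proj\Rep(V)\) and is essentially surjective onto it. This holds because equivalences preserve projectivity and every object of \(\rmod E\) is the image of an object of \(\Rep(V)\). One must also confirm that "\(\Tr\) of an equivalence is an isomorphism" applies to this restricted equivalence. The most direct route is to pass through \(\mathrm{Cyc}\): restricting a cyclic family along an equivalence is invertible, with inverse given by transport along the quasi-inverse. Cyclicity makes the result independent of the chosen isomorphisms \(X\cong\mathcal{H}_G\mathcal{F}_G X\). Everything else is bookkeeping.
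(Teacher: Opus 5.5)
Your proposal is correct and follows the paper's proof essentially verbatim. The paper also restricts the equivalence to \(\rproj E(G)\simeq\proj\Rep(V)\), writes \(\Gamma_G=\Phi_G\circ\Theta\circ\Upsilon\circ\Tr(\mathcal{H}_G|_{\rproj E})^*\) with the first three maps isomorphisms, reads off finite-dimensionality from \(\dim E/[E,E]<\infty\), and concludes via \cref{propSC1iso} and \cref{corIso}. One small correction to your phrasing: for essential surjectivity of the restricted functor, the fact you need is that each projective \(P\in\Rep(V)\) is isomorphic to \(\mathcal{H}_G\mathcal{F}_G P\) with \(\mathcal{F}_G P\) projective, not that every object of \(\rmod E\) comes from \(\Rep(V)\); the paper handles this standard point in one line.
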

\begin{proof}
    The theorem is a simple corollary of the results established in this section. First, note that an equivalence of categories preserves and reflects projectives, so that \(\proj( \Rep(V)) \simeq \rproj E(G)\) for every projective generator \(G\). We then have the following chain of linear morphisms
    \begin{align*}
        \mathrm{Tr}(\proj \Rep(V))^* \xrightarrow{\Tr(\mathcal{H}_G|_{\rproj E})^*} \mathrm{Tr}(\rproj E(G))^* \xrightarrow{\Upsilon} \mathrm{Cyc}(\rproj E(G)) \xrightarrow{\Theta} \mathcal{S}(E(G)) \xrightarrow{\Phi_G} \mathcal{C}_1(V)
    \end{align*}
    composing to \(\Gamma_G\), where the the second and third maps come from \cref{lemTrW} and \cref{lemWisoS}, respectively, by putting \(\mathcal{A} = \rproj E(G)\), and all of the first three maps are isomorphisms. It thus follows that the dimension of the dual of the trace space is finite,
\begin{align*}
    \dim_{\mathbb{C}}\mathrm{Tr}(\proj \Rep(V))^* = \dim_{\mathbb{C}}E(G)/[E(G), E(G)] < \infty.
\end{align*}
    The last map \(\Phi_G\) is the epimorphism from \cref{propSC1iso}, which shows that the concatenated map is surjective as well. In the case that \(V\) has separated conformal weights, \(\Phi_G\) is an isomorphism by \cref{corIso}, rendering \(\Gamma_G\) an isomorphism identifying \(\mathrm{Tr}(\proj \Rep(V))^* \cong \calc_1(V)\). 
\end{proof}
\begin{remark}
In the language of \cite[Definition 5.1.6]{KL}, for any \(\C\)-linear finite abelian category \(\cala\), the set of objects of the subcategory \(\proj(\cala) \subset \cala \) \(p\)-generates \(\cala\). This is because for any object \(X \in \cala\) and a projective generator \(G\) of \(\cala\), there is an epimorphism \(p_X \colon G^m \rightarrow X\) for some \(m \in \mathbb{N}\) and for any projective \(P \in \proj(\cala) \), any morphism \(f \colon P \rightarrow X\) factors through \(p_X\) by projectivity of \(P\). It therefore seems tempting to try to apply \cite[Proposition 5.1.7]{KL} to identify the coends \(\Tr(\cala)\) and \(\Tr(\proj(\cala))\) in the context \(\cala = \Rep(V)\). This, however, is not possible, because the result requires the considered bifunctor to be exact in each argument, whereas \(\Hom_V(-,-)\) is not since \(V\) is not necessarily rational.
\end{remark}
As noted in \cite[Remark 5.13]{GR}, the morphism \(\Phi_G\) is an isomorphism when \(V\) is rational, and so \(\Gamma_G \colon \mathrm{Tr}(\proj \Rep(V))^* \rightarrow \mathcal{C}_1(V)\) is as well. We briefly discuss a couple of nonrational examples.
\begin{example}
    \begin{enumerate}[(i)]
        \item For \(V^B\) as in \cref{exComm} and \(B\) possibly nonsemisimple, we had seen in \cref{exTr} that \(\calc_1(V^B)\) cannot necessarily be spanned by trace functions. However, note that mapping a finitely generated \(B\)-module \(W\) to \(\widehat{W}\) (cf. \cref{exTr}) induces an isomorphism of categories \(\rmod B \rightarrow \Rep V^B\) mapping the projective generator \(B\) of \(\rmod B\) to the projective generator \(\widehat{B}\) of \(\Rep V^B\). Now \(\End_{V^B}(\widehat{B}) \cong B\), so indeed, \(\mathcal{S}(\End_{V^B}(\widehat{B})) \cong B^* \cong \calc_1(V^B)\). However, note that the Virasoro action on \(V^B\) is trivial, so the VOA does not quite satisfy the assumptions for \cref{thmMain}. 
        \item Using the notation of the first example, as pointed out in \cref{exWts}, for \(B\) local, one can find a suitable rational VOA \(V\) so that the nonrational VOA \(V \otimes V^B\) satisfies all the assumptions of \cref{thmMain}, including that of separated conformal weights. 
        \item As observed in \cite{GR} after stating Conjecture 5.8, it is proved in \cite[Corollary 3.1]{AM} for the triplet VOA \(W_{1,p}\) that the space spanned by generalized characters is \((3p - 1)\)-dimensional. This agrees with the dimension of \(\mathcal{S}(E)\) for any choice of projective generator \(G\) of \(\mathrm{Rep}(W_{1,p})\). By \cref{propPGPT}, the space of generalized characters is spanned by functions \(\xi_G^{\phi, E}(\mathbf{1}, -)\). Thus, the linear map \(\Phi_G \colon \mathcal{S}(E) \rightarrow \calc_1(W_{1,p})\) is injective and hence, by \cref{propSC1iso}, an isomorphism. 
    \end{enumerate}
\end{example}

\end{document}

%% file: figures/style1.tikzstyles
\tikzstyle{new style 0}=[fill=white, draw=black, shape=circle]
\tikzstyle{new style 1}=[fill=white, draw=black, shape=rectangle]
\tikzstyle{wide rect}=[fill=white, draw=black, shape=rectangle, minimum width=1.25cm, minimum height=0.5cm]
\tikzstyle{hw}=[fill=white, draw=black, shape=rectangle, minimum height=0.5cm, minimum width=1.0cm]

\tikzstyle{new edge style 1}=[->]
\tikzstyle{new edge style 0}=[<-]

%% file: main.bbl
\begin{thebibliography}{9}
\thispagestyle{empty}

\bibitem[A]{A}
Y. Arike (2010), \emph{Some remarks on symmetric linear functions and pseudotrace maps}. Proc. Japan Acad. Ser. A Math. Sci. 86(7).

\bibitem[AM]{AM}
D. Adamovic, A. Milas (2009), \emph{An analogue of modular BPZ-equation in logarithmic (super)conformal field theory}. Vertex operator algebras and related areas, Contemp. Math. 497 1–17.

\bibitem[AN]{AN}
Y. Arike, K. Nagatomo (2013), \emph{Some remarks on pseudo-trace functions for orbifold models associated with symplectic fermions}. International Journal of Mathematics
Vol. 24, No. 2 (2013) 1350008.

\bibitem[AN2]{AN2}
Y. Arike, K. Nagatomo (2010), \emph{One-point functions over elliptic curves}. arXiv:1008.3771.

\bibitem[Br]{Br}
M. Broué (2009), \emph{Higman's criterion revisited}. Michigan Math. J. 58.

\bibitem[Bo]{Bo}
R. Borcherds (1986), \emph{Vertex algebras, Kac-Moody algebras, and the monster}. Proc. Natl. Acad. Sci. USA 83, 3068-307 J.

%
\bibitem[DLM1]{DLM1}
C. Dong, H. Li, G. Mason (1998), \emph{Twisted representations of vertex operator algebras}. Math Ann 310, 571–600.

\bibitem[DLM2]{DLM2}
C. Dong, H. Li, G. Mason (1998), \emph{Vertex operator algebras and associative algebras}.	Journal of Algebra 206, 67-96.

\bibitem[DLM3]{DLM3}
C. Dong, H. Li, G. Mason (2000), \emph{Modular-invariance of trace functions in orbifold theory and generalized moonshine}. Commun. Math. Phys. 214, 1–56.

%

\bibitem[FHL]{FHL}
I. Frenkel, Y.-Z. Huang, J. Lepowsky (1993), \emph{On axiomatic approaches
to vertex operator algebras
and modules}. Memoirs of the American Mathematical Society, 494.

\bibitem[FLM]{FLM}
I. Frenkel, J. Lepowsky and A. Meurman (1988), \emph{Vertex Operator Algebras and the Monster}. Pure and Applied Math., Vol. 134, Academic Press, Boston.

\bibitem[GR]{GR}
A. Gainutdinov, I. Runkel (2018), \emph{The non-semisimple Verlinde formula and pseudo-trace functions}. Journal of pure and applied algebra 223(2), 660 - 690.

\bibitem[GZ]{GZ}
B. Gui, H. Zhang (2025), \emph{How are pseudo-q-traces related to (co)ends?}. arXiv:2508.04532 [math.QA].

\bibitem[Hua]{Hua}
Y.-Z. Huang (2009), \emph{Cofiniteness conditions, projective covers and the logarithmic tensor product theory}. Journal of Pure and Applied Algebra 213, 458–475.

%
\bibitem[K]{K}
B. Keller (2021), \emph{Hochschild (Co)homology and Derived Categories}. Bulletin of the Iranian Mathematical Society volume 47, 57–83.

\bibitem[KL]{KL}
T. Kerler, V. Lyubashenko (2001), \emph{Non-Semisimple Topological Quantum Field Theories for 3-Manifolds with Corners}. Lecture Notes in Mathematics (Vol. 1765).

\bibitem[L]{L}
F. Loregian (2021), \emph{Coend calculus}. London Mathematical Society, Lecture Note Series 468.

\bibitem[Li]{Li}
H. Li (1999), \emph{Some finiteness properties of regular vertex operator algebras}. J. Alg. 212, 495–514.

\bibitem[LL]{LL}
J. Lepowsky, H. Li (2004), \emph{Introduction to Vertex Operator Algebras and Their Representations}. Progress in Mathematics 227, Birkhäuser Boston, MA.

\bibitem[MaT]{MaT}
G. Mason, M. Tuite (2018), \emph{Vertex Operators and Modular Forms}. A Window into Zeta and Modular Physics, ed Kirsten, K. and Williams, F., MSRI Publications 57.


\bibitem[Miy]{Miy}
M. Miyamoto (2004), \emph{Modular invariance of vertex operator algebras satisfying 
C2-cofiniteness}. Duke Math. J. 122(1), 51-91.

\bibitem[NS]{NS}
C. Nesbitt and W. Scott (1943), \emph{Some remarks on algebras over an algebraically
closed field}. Ann. of Math. 44, No.3, 534–553.

\bibitem[S]{S}
M.-N. Steffen (2022), \emph{Vertex operator algebras and pseudo-trace functions}. Master's thesis, University of Hamburg.

\bibitem[TV]{TV}
V. Turaev and A. Virelizier (2017), \emph{Monoidal categories and topological field theory}. Progress in Mathematics, Vol. 322, Birkhäuser.

\bibitem[Z]{Z}
Y. Zhu (1996), \emph{Modular invariance of characters of vertex operator algebras}. J. Amer. Math. Soc. 9(1), 237-302.
\thispagestyle{empty}
\end{thebibliography}
